\title{Finite burden in multivalued algebraically closed fields}
\author{Will Johnson}
\DeclareMathOperator*{\forkindep}{\raise0.2ex\hbox{\ooalign{\hidewidth$\vert$\hidewidth\cr\raise-0.9ex\hbox{$\smile$}}}}
\newcommand{\Abs}{\operatorname{Abs}}
\newcommand{\diag}{\operatorname{diag}}
\newcommand{\qftp}{\operatorname{qftp}}
\newcommand{\Img}{\operatorname{Im}}
\newcommand{\Gal}{\operatorname{Gal}}
\newcommand{\Frac}{\operatorname{Frac}}
\newcommand{\characteristic}{\operatorname{char}}
\newcommand{\Spec}{\operatorname{Spec}}
\newcommand{\res}{\operatorname{res}}
\newcommand{\Aut}{\operatorname{Aut}}
\newcommand{\acl}{\operatorname{acl}}
\newcommand{\dcl}{\operatorname{dcl}}
\newcommand{\tp}{\operatorname{tp}}
\newcommand{\Val}{\operatorname{Val}}
\newtheorem{theorem}{Theorem}[section] % numbered like the section
\newtheorem{lemma}[theorem]{Lemma}
\newtheorem{claim}[theorem]{Claim}
\newtheorem{definition}[theorem]{Definition}
\newtheorem{corollary}[theorem]{Corollary}
\newtheorem{fact}[theorem]{Fact}
\newtheorem{conjecture}[theorem]{Conjecture}
\newtheorem{remark}[theorem]{Remark}
\newtheorem{example}[theorem]{Example}
\newtheorem{proposition}[theorem]{Proposition}
\newtheorem*{theorem-star}{Theorem}
\newtheorem*{conjecture-star}{Conjecture}
\newcommand{\Aa}{\mathbb{A}}
\newcommand{\Qq}{\mathbb{Q}}
\newcommand{\Rr}{\mathbb{R}}
\newcommand{\Zz}{\mathbb{Z}}
\newcommand{\Nn}{\mathbb{N}}
\newcommand{\Cc}{\mathbb{C}}
\newcommand{\Mm}{\mathbb{M}}
\newcommand{\Ff}{\mathbb{F}}
\newcommand{\Pp}{\mathbb{P}}
\newcommand{\Oo}{\mathcal{O}}
\newcommand{\mm}{\mathfrak{m}}
\newcommand{\pp}{\mathfrak{p}}
\newcommand{\nn}{\mathfrak{n}}
\begin{document}
\maketitle

\begin{abstract}
  We prove that an expansion of an algebraically closed field by $n$
  arbitrary valuation rings is NTP${}_2$, and in fact has finite
  burden.  It fails to be NIP, however, unless the valuation rings
  form a chain.  Moreover, the incomplete theory of algebraically
  closed fields with $n$ valuation rings is decidable.
\end{abstract}

\section{Introduction}
Fix an integer $n$.  Consider the theory $ACv^nF$ of 1-sorted
structures $(K,+,\cdot,\Oo_1,\ldots,\Oo_n)$, where $(K,+,\cdot)
\models ACF$ and each $\Oo_i$ is (a unary predicate for) a valuation
ring on $K$.

Our main results are as follows:
\begin{enumerate}
\item The (incomplete) theory $ACv^nF$ is decidable: there is an
  algorithm which inputs $\varphi$ and outputs whether $ACv^nF \vdash
  \varphi$.
\item If $M \models T$, then $M$ has finite burden, hence is strong,
  NTP${}_2$.
\item If $M \models T$, then $M$ is NIP if and only if the valuation
  rings are pairwise comparable.
\end{enumerate}
Chapter 11 of \cite{myself} considered the more restrictive class of
structures in which the $\Oo_i$ are non-trivial and independent.  The
resulting theory turns out to be the model companion of the theory of
fields with $n$ valuation rings.  In this paper, we generalize the
results of \cite{myself} by eliminating the assumptions of
independence and non-triviality.

Rather than working directly with models of $ACv^nF$, it is more
convenient to work with certain definitional expansions which are
better behaved---for example, they are model complete.  We briefly summarize the situation.

By a \emph{finite tree}, we shall mean a finite poset $(P,\le)$
containing a minimal element $\bot$, such that every interval
$[\bot,p]$ is a chain.  A \emph{branch} of $P$ is a subposet of the
form $\{x \in P : x \ge a\}$ where $a$ is a minimal element of $P
\setminus \{\bot\}$.  The tree $P$ can be written as a disjoint union
\begin{equation*}
  P = \{\bot\} \cup P_1 \cup \cdots \cup P_n
\end{equation*}
where $P_1, \ldots, P_n$ are the distinct branches of $P$.  Each
branch $P_i$ is itself a finite tree.

To any finite tree $P$, we shall associate a theory $T_P$.  A model of
$T_P$ is an algebraically closed field $(K,+,\cdot,\Oo_p : p \in P)$
with a valuation ring $\Oo_p$ for each $p \in P$, satisfying some
axioms.  The important properties are
\begin{enumerate}
\item If $P_1, \ldots, P_n$ are the branches of $P$, then a model of
  $T_P$ is essentially an algebraically closed field $K$ with $n$
  independent non-trivial valuations $\Oo_1,\ldots,\Oo_n$, and a
  $T_{P_i}$-structure on the residue field of $\Oo_i$.
\item Every algebraically closed multivalued field
  $(K,+,\cdot,\Oo_1,\ldots,\Oo_n)$ admits a definitional expansion to
  a model of $T_P$, essentially by adding unary predicates for the
  joins $\Oo_i \cdot \Oo_j$.
\end{enumerate}
The first point allows us to mimic the arguments used in the case of
independent valuations.  The second point relates the theories $T_P$
and $ACv^nF$.

The paper is outlined as follows.  In \S\ref{res-struct}, we consider
the general setting of multi-valued fields with residue structure, and
derive a relative model completeness result in the case of independent
non-trivial valuations.  Essentially, we prove the following: if $T_1,
\ldots, T_n$ are model-complete theories expanding ACF, then model
completeness holds in the theory of algebraically closed fields with
$n$ independent non-trivial valuation rings $\Oo_1,\ldots,\Oo_n$ with
$T_i$-structure on the residue field of $\Oo_i$.  See
Lemma~\ref{mildly-annoying}.

In \S\ref{cast-of-characters} we introduce the aforementioned theory
$T_P$ and apply the results of \S\ref{res-struct} to prove that $T_P$
is model complete.  Moreover, we show that $T_P$ is the model
companion of a simpler theory $T_P^0$.  (Models of $T_P^0$ are exactly
the subfields of models of $T_P$.)

In \S\ref{amalg-in-tp}, we prove that $T_P^0$ has the amalgamation
property over algebraically closed bases.  From this, we deduce
several consequences, such as the usual criterion for elementary
equivalence: two models $M_1, M_2$ of $T_P$ are elementarily
equivalent iff the substructures $\Abs(M_1)$ and $\Abs(M_2)$ are
isomorphic.  This in turn yields decidability of $T_P$.  The proof of
amalgamation in $T_P^0$ relies on an amalgamation lemma in ACVF, which
we prove in \S\ref{controlled-amalg}.  The lemma says that when
amalgamating valued fields, we have complete freedom in how we
amalgamate the residue fields.

The rest of the paper is devoted to the classification-theoretic
dividing lines NTP${}_2$ and NIP.  In \S\ref{probable-truth} we define
a canonical Keisler measure on the set of complete types extending any
quantifier-free type.  More precisely, given any model $K \models
T_P^0$, we define a Keisler measure on the space of completions of
$T_P \cup \diag(K)$.  (This is a variant of the Keisler measure
defined in \S 11.4 of \cite{myself}.)  Some of the key properties of
the Keisler measure rely on an analysis in \S\ref{cubes-galore} of
extensions of nested valuation rings in certain diagrams of fields.
The analysis is notationally confusing, but not deep.

In \S\ref{finite-burden}, we
verify that models of $T_P$ have finite burden, using a minor lemma
proven in \S\ref{sec:coheirs}.  In \S\ref{sec:nip} we turn to the
matter of NIP, reviewing the argument from \cite{myself} \S 11.5.1
that algebraically closed fields with independent valuations cannot be
NIP.  We conclude in \S\ref{sec:horizons} by discussing different
directions in which the results can probably be generalized.

\subsection{Notation}
We will generally use the letter $\Oo$ for valuation rings, $\mm$ for
their maximal ideals, and lowercase roman letters (such as $k, \ell$)
for residue fields.

If $K$ is a valued field with valuation ring $\Oo$, we let $\res \Oo$
denote the residue field.  We also write $\res K$ for the residue
field, if $\Oo$ is clear from context.  We will also use $\res(x)$ to
denote the residue of $x$.

When multiple valuation rings $\Oo_1, \ldots, \Oo_n$ are in play, we
will use subscripts to indicate which residue map we are talking
about: $\res_i(x)$ denotes the residue of $x$ in the $i$th residue
field $k_i = \res \Oo_i$.

If $\Oo' \subseteq \Oo$ are two valuation rings on a field $K$, we let
$\Oo' \div \Oo$ denote the unique valuation ring on $\res \Oo$ whose
composition with $\Oo$ is $\Oo'$.

Two non-trivial valuation rings $\Oo_1, \Oo_2$ are \emph{independent}
if they induce distinct topologies.  An equivalent condition is that
$\Oo_1 \cdot \Oo_2 = K$.  Here, $\Oo_1 \cdot \Oo_2$ denotes the
join---the smallest valuation ring on $K$ containing both $\Oo_1$ and
$\Oo_2$.  It happens to agree with the setwise product \[\Oo_1 \cdot
\Oo_2 = \{x \cdot y : x \in \Oo_1 \text{ and } y \in \Oo_2\}.\]

We let $\Val(K)$ denote the poset of all valuation rings on $K$.  If
$\Oo \in \Val(K)$, we let $\Val(K|\Oo)$ denote the subset
\begin{equation*}
  \Val(K|\Oo) := \{\Oo' \in \Val(K) : \Oo' \subseteq \Oo\}.
\end{equation*}
The poset $\Val(K|\Oo)$ is canonically isomorphic to $\Val(\res \Oo)$
via the map
\begin{align*}
  \Val(K|\Oo) & \to \Val(\res \Oo) \\
  \Oo' & \mapsto \Oo' \div \Oo.
\end{align*}

\section{Multi-valued fields with residue structure} \label{res-struct}
If $X$ is a set and $\mathcal{T}_1, \ldots, \mathcal{T}_n$ are
topologies on $X$, we say that $\mathcal{T}_1,\ldots,\mathcal{T}_n$
are \emph{jointly independent} if the diagonal embedding
\begin{equation*}
  X \hookrightarrow (X,\mathcal{T}_1) \times \cdots \times (X,\mathcal{T}_n)
\end{equation*}
has dense image.  In other words, if $U_i$ is a non-empty
$\mathcal{T}_i$-open for each $i$, then $\bigcap_{i = 1}^n U_i$ is
non-empty.
\begin{fact}\label{strong-approx}
  Let $K$ be an algebraically closed field.  Let $\Oo_1, \ldots,
  \Oo_n$ be pairwise independent non-trivial valuation rings on $K$.
  Let $V \subseteq \Aa^n_K$ be an irreducible affine variety.  Then
  the metric topologies on $V$ are jointly independent.
\end{fact}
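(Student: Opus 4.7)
Our plan is to reduce the statement to the classical weak approximation theorem on $K$ itself, via Noether normalization and Hensel's lemma. The classical theorem asserts that for any elements $a_1,\ldots,a_n \in K$ and any metric neighborhoods $N_i \ni a_i$ (in the topology induced by $\Oo_i$), the intersection $\bigcap_i N_i$ is non-empty; applied coordinatewise it immediately settles the case $V = \Aa^m_K$.

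The smooth locus $V^{\mathrm{sm}}$ is Zariski-open and dense in $V$, and any Zariski-dense subset of the irreducible variety $V$ is dense in each metric topology on $V(K)$ (by the local analytic structure at any smooth point), so we may intersect each $U_i$ with $V^{\mathrm{sm}}$ and assume $V$ is smooth. Noether normalization then yields a finite surjection $\pi : V \to \Aa^d_K$ (with $d = \dim V$) which is étale on a Zariski-open dense $V^\circ \subseteq V$. Because $K$ is algebraically closed and hence Henselian with respect to every $\Oo_i$, the étale morphism $\pi$ is a local homeomorphism in each metric topology; in particular each $\pi(U_i \cap V^\circ)$ is open in the $\Oo_i$-topology on $K^d$, and classical weak approximation coordinatewise produces some $y \in \bigcap_i \pi(U_i \cap V^\circ)$.

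The main obstacle is to promote $y$ to a single $v \in \bigcap_i U_i$: the fiber $\pi^{-1}(y)$ has $\deg \pi$ points and each $U_i$ contains at least one of them, but a priori different valuations may single out different sheets of $\pi^{-1}$. To circumvent this I would induct on $d$. The base case $d = 0$ is trivial. For $d \geq 2$, Bertini's theorem supplies a dominant morphism $\phi : V \to \Aa^1_K$ whose generic fiber is smooth and geometrically irreducible; weak approximation on $K$ then gives $y \in \bigcap_i \phi(U_i)$ over which the fiber $V_y := \phi^{-1}(y)$ is smooth and geometrically irreducible of dimension $d - 1$, and the inductive hypothesis applied to $V_y$ and the opens $U_i \cap V_y$ provides the desired common point.

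The case $d = 1$---a smooth irreducible curve---is where the branch-selection issue is sharpest and is, in my view, the main technical difficulty. Such a curve is locally a ball in $K$ in each metric topology via a local uniformizer; the irreducibility of the curve together with the algebraic closedness of $K$ must be used to show that compatible branches can be chosen across all valuations, typically by varying an auxiliary rational function and invoking weak approximation on its coefficients. The irreducibility hypothesis is essential throughout: for reducible $V$ the conclusion genuinely fails, as distinct irreducible components may be singled out by different valuations.
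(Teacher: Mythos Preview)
The paper does not prove this statement: it is labeled a \emph{Fact} and attributed to Theorem~11.3.1 of the author's earlier work, so there is no in-paper argument to compare against.

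Your proposal is a reasonable plan but has a genuine gap, and you name it yourself. The reduction to smooth $V$ and the Noether-normalization setup are fine (algebraically closed valued fields are Henselian, so the \'etale map is a local homeomorphism in each topology, and the images $\pi(U_i\cap V^\circ)$ are indeed open). The Bertini induction to push the dimension down is also plausible, modulo some care in positive characteristic (geometric irreducibility of a generic fiber can fail for inseparable pencils) and the need for $\phi$ to be smooth so that $\phi(U_i)$ is open. But all of this only reduces the problem to the case of a smooth irreducible curve, and there you stop: you describe the branch-selection difficulty, say it is ``the main technical difficulty,'' and offer a heuristic (``varying an auxiliary rational function and invoking weak approximation on its coefficients'') rather than an argument.

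That curve case is the entire content of the theorem; the rest is standard reduction. To finish you need an actual mechanism for selecting compatible sheets across the $n$ valuations---for instance, Riemann--Roch on the curve to manufacture a rational function with prescribed residues or poles at finitely many chosen points, or a direct analysis of quantifier-free definable subsets of curves in ACVF. As written, the proposal is an outline that stops exactly where the work begins.
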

This should be a classical result, but I had trouble finding the
original reference; a later proof is in \cite{myself} Theorem 11.3.1.
\begin{lemma}\label{3-sorted-proto-consequence}
  Let $(K,\Oo) \le (K',\Oo')$ be an extension of models of ACVF.  Let
  $k \le k'$ be the corresponding residue field extension.  Then any
  $K$-definable subset of $(k')^n$ is quantifier-free $k$-definable in
  the pure ring structure on $k'$.
\end{lemma}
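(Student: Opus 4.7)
The plan is to reduce to quantifier elimination, first in ACVF and then in ACF. I would work with ACVF in a multi-sorted language having separate sorts for the valued field, the residue field, and the value group, connected by function symbols for the valuation and residue maps. In this language ACVF has quantifier elimination (Robinson), so the given $K$-definable subset $S \subseteq (k')^n$ is cut out by a quantifier-free formula $\varphi(\bar y; \bar c)$, where $\bar c$ is a tuple of parameters from $K$ (and possibly from the induced residue field and value group of $K$) and $\bar y$ is a tuple of variables of the residue-field sort.

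Next, I would analyse which atomic subformulas of $\varphi$ actually depend on $\bar y$. In the multi-sorted language there is no function symbol going from the residue-field sort to the valued-field or value-group sort, so every term of sort $K$ or $\Gamma$ appearing in $\varphi$ is built from $\bar c$ alone. The $\bar y$-dependent atomic formulas are therefore of the form $P(\bar y) = 0$, where $P$ is a polynomial whose coefficients are residue-field terms in $\bar c$, hence lie in $\res \Oo = k$. Consequently, on $(k')^n$, $\varphi(\bar y; \bar c)$ is equivalent to a Boolean combination of polynomial equations over $k$, which is already a quantifier-free formula in the pure ring language with parameters in $k$. Had one instead obtained a formula with quantifiers, quantifier elimination for ACF (applied to the algebraically closed field $k'$ over the parameter set $k$) would reduce it to a quantifier-free one.

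The main (and fairly minor) obstacle is bookkeeping: one must check that the coefficients produced by the ACVF quantifier elimination really do come from $\res \Oo = k$, rather than from the larger residue field $\res \Oo' = k'$. Since the parameters $\bar c$ are taken from $K$ (and from the structures it induces on $\Gamma$ and $k$), any residue-field term in $\bar c$ evaluates inside $k$, so this is automatic. An essentially equivalent, slicker route is to invoke the fact that in ACVF the residue field is \emph{stably embedded} with induced structure equal to the pure field structure; this directly yields a $k$-definable description of $S$, and quantifier-freeness again follows from quantifier elimination in ACF.
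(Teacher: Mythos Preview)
Your proposal is correct and is exactly the approach the paper takes: the paper's proof says only ``This follows from the 3-sorted quantifier elimination in ACVF,'' and you have correctly spelled out the routine syntactic analysis of quantifier-free formulas in the three-sorted language that makes this work. The key observation---that no term of valued-field or value-group sort can involve a residue-field variable, so the $\bar y$-dependent atomic subformulas are polynomial equations with coefficients in $k$---is precisely what is needed.
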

\begin{proof}
  This follows from the 3-sorted quantifier elimination in ACVF.  We
  shall also see another proof later (Remark~\ref{2-or-3}).
\end{proof}
\begin{corollary}\label{3-sorted-consequence}
  Let $(K,\Oo) \le (K',\Oo')$ be an extension of models of ACVF, and
  $k \le k'$ be the residue field extension.  Let $V \subseteq
  \Aa^{n+m}$ be a quasi-affine variety over $K$.  There is a
  quantifier-free formula $R(\vec{\xi})$ in the language of rings over
  $k$ such that
  \begin{align*}
    R(K) &= \{\res(\vec{x}) : (\vec{x},\vec{y}) \in V(K)\} \\
    R(K') &= \{\res(\vec{x}) : (\vec{x},\vec{y}) \in V(K')\}
  \end{align*}
  where $\vec{x}, \vec{\xi}$ are $n$-tuples, $\vec{y}$ is an
  $m$-tuple, and $\res(\vec{x})$ is understood componentwise.
\end{corollary}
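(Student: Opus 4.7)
The plan is to express the set in question as a 3-sorted ACVF-definable set, invoke Lemma~\ref{3-sorted-proto-consequence}, and then use elementarity of ACVF extensions to ensure that the resulting quantifier-free ring formula works in both $K$ and $K'$ simultaneously. Let $\varphi(\vec{\xi})$ be the 3-sorted formula over $(K,\Oo)$ given by
\[
\varphi(\vec{\xi}) \;:=\; \exists \vec{x}\; \exists \vec{y}\; \bigl(\vec{x} \in \Oo^n \wedge (\vec{x},\vec{y}) \in V \wedge \res(\vec{x}) = \vec{\xi}\bigr),
\]
where $\vec{\xi}$ is an $n$-tuple in the residue sort and $\vec{x},\vec{y}$ range over the field sort. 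Interpreted in $(K',\Oo')$, this defines the subset of $(k')^n$ that the corollary identifies with $R(K')$; interpreted in $(K,\Oo)$, it defines the analogous subset of $k^n$.

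Now apply Lemma~\ref{3-sorted-proto-consequence} to the $K$-definable set $\varphi(k') \subseteq (k')^n$. This produces a quantifier-free formula $R(\vec{\xi})$ in the language of rings, with parameters from $k$, such that $R(k') = \varphi(k')$. To conclude, observe that the biconditional $\forall \vec{\xi}\,(\varphi(\vec{\xi}) \leftrightarrow R(\vec{\xi}))$ is a 3-sorted ACVF sentence over $(K,\Oo)$, and by construction it holds in $(K',\Oo')$. Model completeness of ACVF (a consequence of the 3-sorted quantifier elimination) ensures that $(K,\Oo) \preceq (K',\Oo')$, so the biconditional also holds in $(K,\Oo)$, giving $R(k) = \varphi(k)$ as required.

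There is no real obstacle. The only mildly delicate point is that Lemma~\ref{3-sorted-proto-consequence} produces a formula tailored to the chosen ambient model $K'$; the content of the argument is that, thanks to elementarity of ACVF extensions, the same formula also describes the corresponding set in the smaller model $K$.
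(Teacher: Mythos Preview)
Your proposal is correct and is essentially the same argument as the paper's: invoke Lemma~\ref{3-sorted-proto-consequence} to obtain a quantifier-free ring formula $R$ valid in $(K',\Oo')$, then use model completeness of ACVF to conclude that the same $R$ works in $(K,\Oo)$. The paper states this in one sentence; you have simply spelled out the details.
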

\begin{proof}
  This follows from Lemma~\ref{3-sorted-proto-consequence}; the same
  formula $R$ works for both $(K,\Oo)$ and $(K',\Oo')$ by model
  completeness of ACVF.
\end{proof}

In this section, we shall always consider $n$-fold multivalued fields
in the $(n+1)$-sorted language $(K,k_1,\ldots,k_n)$ with
\begin{itemize}
\item Field structure on each $K$ and $k_i$.
\item Unary predicates $\Oo_1, \ldots, \Oo_n$ for the valuation rings.
\item Partial maps $\res_i : K \rightsquigarrow k_i$.
\end{itemize}

\begin{remark}\label{ec-comments}
  Let $(K,k_1,\ldots,k_n)$ be an $n$-fold multivalued field, and let
  $(K',k'_1,\ldots,k'_n)$ be an extension.  Suppose $K$ is e.c.\ in
  $K'$.
  \begin{enumerate}
  \item If $K'$ is an algebraically closed field, then $K$ is an
    algebraically closed field.
  \item If $\Oo'_i$ is non-trivial, then $\Oo_i$ is non-trivial.
  \item If $\Oo'_i$ and $\Oo'_j$ are independent, then $\Oo_i$ and
    $\Oo_j$ are independent.
  \end{enumerate}
\end{remark}
\begin{proof}
  \begin{enumerate}
  \item If $K'$ is algebraically closed and $K$ is not, take a monic
    polynomial $P(X) \in K[X]$ without a solution in $K$.  Then
    \begin{equation*}
      \exists x : P(x) = 0
    \end{equation*}
    holds in $K'$, but not in $K$, contrary to existential closedness.
  \item Suppose $\Oo'_i$ is non-trivial but $\Oo_i$ is trivial.  Then
    \begin{equation*}
      \exists x : x \notin \Oo_i
    \end{equation*}
    holds in $K'$ but not in $K$.
  \item Suppose, say, $\Oo_1$ and $\Oo_2$ fail to be independent.
    Then $\Oo_0 = \Oo_1 \cdot \Oo_2$ is a non-trivial valuation ring.
    Let $\mm_i$ denote the maximal ideal of $\Oo_i$.  Then
    \begin{align*}
      \mm_0 \subset \mm_1 & \subset \Oo_1 \subset \Oo_0 \\
      \mm_0 \subset \mm_2 & \subset \Oo_2 \subset \Oo_0.
    \end{align*}
    Because $\Oo_0$ is non-trivial, there is some non-zero
    $\varepsilon \in \mm_0$.  Then
    \begin{align*}
       \varepsilon \Oo_1 \subseteq \varepsilon \Oo_0 & \subseteq \mm_0 \\
       \varepsilon \Oo_2 \subseteq \varepsilon \Oo_0 & \subseteq \mm_0.
    \end{align*}
    So the existential statement
    \begin{equation*}
      \exists x, y : x \in \Oo_1 \wedge y \in \Oo_2 \wedge 1 +
      \varepsilon x = \varepsilon y
    \end{equation*}
    is false in $K$, as $\mm_0$ and $1 + \mm_0$ are disjoint.  On the
    other hand, this existential statement is true in $K'$ by
    approximation (Fact~\ref{strong-approx}) on the line $\{(x,y) : 1
    + \varepsilon x = \varepsilon y\}$.
  \end{enumerate}
\end{proof}

\begin{lemma}\label{boring-extension}
  Let $(K,\Oo_1,\ldots,\Oo_n)$ be a field with $n$ valuations.  Then
  we can embed $(K,\Oo_1,\ldots,\Oo_n)$ into a larger $n$-valued field
  $(K',\Oo'_1,\ldots,\Oo'_n)$ such that
  \begin{enumerate}
  \item $K'$ is algebraically closed.
  \item Each $\Oo'_i$ is non-trivial.
  \item The $\Oo'_i$ are pairwise independent.
  \end{enumerate}
\end{lemma}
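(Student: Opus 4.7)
The plan is to build $K'$ in three moves: pass to the algebraic closure, adjoin $n$ transcendentals equipped with weighted Gauss extensions of the valuations, and then algebraically close once more. First, I would extend each $\Oo_i$ to a valuation ring on $K^{\mathrm{alg}}$, which is always possible; this reduces us to assuming $K$ is algebraically closed, and we write $\Gamma_i$ for the value group of $\Oo_i$.

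Next, adjoin algebraically independent transcendentals $t_1, \ldots, t_n$ and work over $L = K(t_1, \ldots, t_n)$. For each $i$, pick a strictly positive element $\gamma_i$ in some ordered extension of $\Gamma_i$---in $\Gamma_i$ itself if $\Oo_i$ is non-trivial, or in a copy of $\Zz$ (with $\gamma_i = 1$) if $\Oo_i$ is trivial---and define the weighted Gauss extension $W_i$ on $L$ by
\[
  \val_{W_i}\!\left(\sum_{\alpha \in \Nn^n} a_\alpha t^\alpha\right) = \min_\alpha \bigl( \val_i(a_\alpha) + \alpha_i \gamma_i \bigr),
\]
extended multiplicatively to $L$. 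Iterating the standard one-variable Gauss construction shows this is a valuation; it restricts to $\Oo_i$ on $K$, is non-trivial because $\val_{W_i}(t_i) = \gamma_i > 0$, and satisfies $\val_{W_i}(t_j) = 0$ for $j \neq i$. Finally I would set $K' = L^{\mathrm{alg}}$ and extend each $W_i$ to a valuation ring $\Oo'_i$ on $K'$.

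The key step, and the one that motivates this particular construction, is pairwise independence of the $W_i$, which I would verify via the topological criterion. For $i \neq j$, the sequence $(t_i^k)_{k \geq 0}$ converges to $0$ in the $W_i$-topology since $\val_{W_i}(t_i^k) = k\gamma_i$ is unbounded, but is bounded away from $0$ in the $W_j$-topology since $\val_{W_j}(t_i^k) = 0$ for all $k$. Hence the $W_i$- and $W_j$-topologies on $L$ differ, which is exactly the definition of independence. Non-triviality is obviously preserved under the final extension to $K'$, and independence is too: the $L$-topologies of $W_i$ and $W_j$ are already distinct, so any extensions to $K'$ must induce distinct topologies, since the $L$-topology is just the restriction of the $K'$-topology.
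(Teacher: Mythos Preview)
Your constructive approach is different from the paper's (which passes to an existentially closed model and then argues that e.c.\ forces all three properties), and the overall shape is reasonable. However, the independence step has a genuine gap.

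The claim that $(t_i^k)$ converges to $0$ in the $W_i$-topology because ``$k\gamma_i$ is unbounded'' is not justified by your construction: you allow $\gamma_i$ to be an arbitrary positive element of $\Gamma_i$, and in a non-archimedean value group $\Zz\gamma_i$ need not be cofinal. Concretely, take $n=2$ with $\Oo_1=\Oo_2=\Oo$ a valuation of rank $2$ on $K$, say with value group $\Zz\oplus\Zz$ ordered lexicographically with the first coordinate dominant, and choose $\gamma_1=\gamma_2=(0,1)$. Then $W_1$ and $W_2$ both have value group $\Zz\oplus\Zz$, and coarsening either one by the convex subgroup $\{0\}\oplus\Zz$ gives the \emph{same} valuation on $L$: namely the Gauss extension of the rank-$1$ coarsening of $\Oo$ with $t_1,t_2\mapsto 0$. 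So $W_1$ and $W_2$ share a nontrivial common coarsening and are \emph{not} independent. Your convergence argument fails here precisely because $k\cdot(0,1)$ is bounded above by $(1,0)$.

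The fix is to force $\gamma_i$ to dominate $\Gamma_i$: take the ordered extension $\Zz\oplus\Gamma_i$ (lex, with the new $\Zz$ on top) and set $\gamma_i=(1,0)$. Equivalently, let $W_i$ be the composite of the $t_i$-adic valuation on $K(t_1,\ldots,t_n)$ (residue field $K(t_1,\ldots,\widehat{t_i},\ldots,t_n)$) with the trivial Gauss extension of $\Oo_i$ in the remaining variables. With this choice $\Zz\gamma_i$ is genuinely cofinal in the value group of $W_i$, your convergence argument goes through, and the rest of your proof is fine. This is essentially the paper's move in its case~(3), where it composes with the $T$-adic and $(T+1)$-adic valuations; the paper just handles one property at a time and appeals to existential closedness to glue, whereas you build the witness in one shot.
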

\begin{proof}
  The class of $n$-fold multivalued fields is an $\forall
  \exists$-elementary class, so we may assume $(K,\Oo_1,\ldots,\Oo_n)$
  is e.c.\ among fields with $n$-valuations.  We claim that $K$ has the
  desired properties of $K'$.  By Remark~\ref{ec-comments}, it
  suffices to produce extensions of $K$ having each of the properties
  separately.
  \begin{enumerate}
  \item By the Chevalley Extension Theorem (\cite{PE} Theorem 3.1.1), we can extend
    each $\Oo_i$ to a valuation ring $\Oo'_i$ on $K^{alg}$.  Then
    $(K^{alg},\Oo'_1,\ldots,\Oo'_n)$ is an extension in which $K'$ is
    algebraically closed.
  \item Suppose, say, $\Oo_1$ is trivial.  Let $K((T))$ be the Laurent
    field extension, and let $\Oo'_1$ be the discrete valuation ring
    $K[[T]]$.  Then $\Oo'_1$ extends the trivial valuation $\Oo_1$ on
    $K$.  For $i \ne 1$, let $\Oo'_i$ be an arbitrary extension of
    $\Oo_i$ to $K((T))$.  Then $(K((T)),\Oo'_1,\ldots,\Oo'_n)$ is an
    extension in which $\Oo'_1$ is non-trivial.
  \item Suppose, say, $\Oo_1$ and $\Oo_2$ fail to be independent.  Let
    $\Oo'_1$ be the valuation on $K(T)$ obtained by composing the
    $T$-adic valuation with $\Oo_1$.  Let $\Oo'_2$ be the valuation on
    $K(T)$ obtained by composing the $(T+1)$-adic valuation with
    $\Oo_2$.  Then $\Oo'_i$ extends $\Oo_i$ for $i = 1, 2$, and
    $\Oo'_1$ is independent from $\Oo'_2$, because the $T$-adic and
    $(T+1)$-adic valuations on $K(T)$ are independent.  For $i \ne 1,
    2$ choose $\Oo'_i$ to be an arbitrary extension of $\Oo_i$.
  \end{enumerate}
\end{proof}

\begin{remark}\label{basic-fact}
  If $T$ is a model complete theory and $M \le N \models T$, then $M$
  is not e.c.\ in $N$ unless $M \models T$.
\end{remark}
\begin{proof}
  Suppose $M$ is e.c.\ in $N$.  We claim that $M \preceq N$ by the
  Tarski-Vaught test.  Let $X \subseteq N$ be a non-empty
  $M$-definable set in the structure $N$.  By model completeness, $X =
  \pi(Y)$ where $\pi : N^n \twoheadrightarrow N$ is a coordinate
  projection and $Y \subseteq N^n$ is quantifier-free definable over
  $M$.  Non-emptiness of $X$ implies non-emptiness of $Y$.  As $M$ is
  e.c.\ in $N$, the set $Y \cap M^n$ is non-empty.  Therefore $\pi(Y
  \cap M^n) \subseteq X \cap M$ is non-empty.
\end{proof}

\begin{lemma}\label{mildly-annoying}
  Let $T_1, \ldots, T_n$ be model-complete expansions of ACF.  Let $T$
  be the theory of $(n+1)$-sorted structures $(K,k_1,\ldots,k_n)$ with
  field structure on $K$, residue maps $\res_i : K \rightsquigarrow
  k_i$ for each $i$, and with $(T_i)_\forall$ structure on each $k_i$.
  Then $(K,k_1,\ldots,k_n)$ is existentially closed if and only if
  \begin{enumerate}
  \item $K = K^{alg}$,
  \item each valuation ring $\Oo_i$ is non-trivial
  \item the valuation rings $\Oo_i$ are pairwise independent, and
  \item each $k_i$ is a model of $T_i$.
  \end{enumerate}
  In particular, $T$ has a model companion.
\end{lemma}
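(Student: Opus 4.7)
The plan is to prove the biconditional directly and then derive the model companion from the first-order axiomatizability of conditions~1--4.

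For the forward direction I would verify each of 1--4 in turn, assuming $M = (K, k_1, \ldots, k_n)$ is existentially closed. For conditions 1--3, Lemma~\ref{boring-extension} supplies for each property a $T$-extension of $M$ witnessing it, and Remark~\ref{ec-comments} then transfers the property back down to $M$ via existential closedness. For condition 4, I would embed $k_i$ into a model $k_i^*$ of $T_i$ and form the $T$-extension $(K, k_1, \ldots, k_i^*, \ldots, k_n)$ of $M$ by replacing only the $i$th residue sort; existential closedness makes $k_i$ existentially closed in $k_i^*$ in the language of $T_i$, and model completeness of $T_i$ together with Remark~\ref{basic-fact} then forces $k_i \models T_i$.

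The substantive content is the reverse direction. Suppose $M$ satisfies 1--4 and $M \subseteq N \models T$, and let $\exists \vec{x}\,\vec{y}_1 \cdots \vec{y}_n\,\psi$ be an existential $T$-formula with parameters from $M$ that holds in $N$; I want witnesses in $M$. First I would extend $N$ to a $T$-model $N^* = (K^*, k_1^*, \ldots, k_n^*)$ satisfying 1--4 itself, by applying Lemma~\ref{boring-extension} on the field part and extending each residue sort to a model of $T_i$; the existential formula persists. Model completeness of $T_i$ then replaces each $T_i$-atomic subformula of $\psi$ by an equivalent existential ring-language formula, absorbing the new witnesses into the $\vec{y}_i$-variables, so $\psi$ may be assumed quantifier-free in the ring plus residue-map language. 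Applying Corollary~\ref{3-sorted-consequence} to the ACVF extension $(K, \Oo_i) \subseteq (K^*, \Oo_i^*)$, the set of residue tuples $\res_i(p(\vec{x}))$ admitting suitable $\vec{y}_i$-completions in $k_i^*$ is already cut out by a quantifier-free ring formula $R_i$ over $k_i$. This collapses the task to finding $\vec{x} \in K$ satisfying certain Zariski conditions together with certain open/closed conditions in the $\Oo_i$-metric topologies on the variety they cut out.

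The final step uses Fact~\ref{strong-approx}. The Zariski conditions define a variety $V$ over $K$, non-empty by $K = K^{alg}$ and the Nullstellensatz. The valuative conditions are intersections of open and closed sets in the various $\Oo_i$-metric topologies on $V(K)$, each non-empty by transfer from $V(K^*)$; by conditions 1--3 and Fact~\ref{strong-approx}, these topologies on $V(K)$ are jointly independent, producing a single $\vec{x} \in V(K)$ that satisfies all the constraints. The companion witnesses $\vec{y}_i \in k_i$ are then recovered using the formulas $R_i$ and the algebraic closedness of $k_i$ (condition 4, since $T_i \supseteq \mathrm{ACF}$). I expect the main obstacle to be the careful coordination of $\vec{y}_i$-witnesses that are forced to be residues of polynomials in $\vec{x}$ with those that remain free in $k_i$, especially when $k_i$ is strictly larger than $\res_i(K)$; this requires a systematic unified application of Corollary~\ref{3-sorted-consequence} and model completeness of $T_i$. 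The ``in particular'' clause is then immediate: each of 1--4 is first-order, so the e.c.\ models of $T$ form the elementary class axiomatized by $T \cup \{1, 2, 3, 4\}$, which is the model companion of $T$.
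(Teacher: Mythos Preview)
Your overall architecture matches the paper's, but there is a genuine error in the backward direction. The sentence ``Model completeness of $T_i$ then replaces each $T_i$-atomic subformula of $\psi$ by an equivalent existential ring-language formula'' is false: model completeness of $T_i$ only says that $T_i$-formulas are equivalent, modulo $T_i$, to existential formulas \emph{in the language of $T_i$}; it does not let you drop to the ring language. (Take $T_i$ to be ACVF: the valuation-ring predicate is not ring-definable.) Consequently your final step, recovering the $\vec{y}_i$ from ``algebraic closedness of $k_i$'' alone, cannot work: the $\vec{y}_i$ must satisfy genuine $T_i$-constraints, and you need $k_i \models T_i$, not merely $k_i \models \mathrm{ACF}$.

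The paper's fix is to leave the residue constraint $R_i(\vec{\xi}_i)$ in the $T_i$-language throughout. Corollary~\ref{3-sorted-consequence} is used to produce a \emph{separate} ring-language formula $R'_i(\vec{\xi}_i)$ describing which tuples are achievable as $\res_i(\vec{x}_i)$ for some point of $V \setminus W$; one then replaces $R_i$ by $R_i \wedge R'_i$. The key move is to find the residue witnesses \emph{first}: since $k_i$ and $k_i^*$ both model the model-complete theory $T_i$, the inclusion $k_i \hookrightarrow k_i^*$ is elementary, so the existential sentence $\exists \vec{\xi}_i\, R_i(\vec{\xi}_i)$, true in $k_i^*$ via the original witness, already has a solution $\vec{\alpha}_i \in k_i$. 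Only then does one invoke Fact~\ref{strong-approx} on the $\Oo_i$-open sets $\{\vec{x} \in V \setminus W : \res_i(\vec{x}_i) = \vec{\alpha}_i\}$, nonempty because $R'_i(\vec{\alpha}_i)$ holds. Your order (find $\vec{x}$ first, then $\vec{y}_i$) breaks down precisely here: nothing guarantees that the particular residues $\res_i(\vec{x}_i)$ you land on admit $T_i$-completions in $k_i$ rather than only in $k_i^*$.

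A smaller gap in the forward direction: Lemma~\ref{boring-extension} produces an extension of the underlying multi-valued field, not a $T$-extension---one must also extend the $(T_i)_\forall$-structures to the enlarged residue fields, which is not automatic. The paper handles this by taking the extension highly saturated, so each new residue field is algebraically closed of large transcendence degree over $k_i$ and hence carries a $T_i$-model structure extending that of $k_i$. Your own trick for condition~4 (enlarging a single $k_i$ while keeping $K$ fixed) can be adapted to patch this as well.
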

\begin{proof}
  We first show the necessity of the listed conditions.  Suppose
  $(K,k_1,\ldots,k_n)$ is existentially closed.  By
  Lemma~\ref{boring-extension} we can embed $(K,k_1,\ldots,k_n)$ into
  a larger multivalued field $(K',k'_1,\ldots,k'_n)$, without residue
  structure, such that $K'$ is algebraically closed, each $\Oo'_i$ is
  non-trivial, and the $\Oo'_i$ are pairwise independent.

  We can take $(K',\ldots)$ to be highly saturated.  Then $k'_i$ is an
  algebraically closed field of high transcendence degree over $k$.
  As $k_i \models (T_i)_\forall$ we can find a $T_i$-structure on
  $k'_i$ extending the $(T_i)_\forall$-structure on $k_i$.  This
  endows $(K',k'_1,\ldots,k'_n)$ with a $T$-structure, such that $k'_i
  \models T_i$.

  Now $(K,k_1,\ldots,k_n)$ is existentially closed in the extension
  $(K',k'_1,\ldots,k'_n)$.  Then $k_i$ is e.c.\ in $k'_i$, so $k_i
  \models T_i$ by Remark~\ref{basic-fact}.  Similarly, by
  Remark~\ref{ec-comments}, the $\Oo_i$ are non-trivial and
  independent, and $K = K^{alg}$.  Thus $(K,k_1,\ldots,k_n)$ must
  satisfy the listed conditions if it is existentially closed.

  Next, suppose that $(K,k_1,\ldots,k_n)$ satisfies all the listed
  conditions.  Let $(K',k'_1,\ldots,k'_n)$ be an extension; we must
  show that $K$ is e.c.\ in $K'$.  Enlarging $K'$, we may assume that
  $K'$ is e.c., hence satisfies the listed conditions.  Suppose we are
  given some existential formula over $K$ which is true in $K'$; we
  must show it is true in $K$.  By adding dummy variables, we reduce
  to an existential formula of the form
  \begin{align*}
    \exists \vec{x}_1, \ldots, \vec{x}_n, \vec{y}, \vec{\xi}_1,
    \ldots, \vec{\xi}_n :& R_0(\vec{x}_1,\ldots,\vec{x}_n,\vec{y}) \\
    \wedge & \bigwedge_{i = 1}^n \left(\res_i(\vec{x}_i) =
    \vec{\xi}_n\right) \\ \wedge & \bigwedge_{i = 1}^n R_i(\vec{\xi}_i),
  \end{align*}
  where
  \begin{itemize}
  \item $\vec{x}_1, \ldots, \vec{x}_n, \vec{y}$ are tuples from the
    big field sort
  \item $\vec{\xi}_i$ is a tuple from the $i$th residue field sort,
  \item $\res_i(-)$ acts on vectors componentwise
  \item $R_0$ is a quantifier-free formula over $K$ in the pure field
    language
  \item $R_i$ is a quantifier-free formula over $k_i$ in the language
    of $T_i$.
  \end{itemize}
  The relation $R_0$ can be written as a disjunction of conjunctions;
  we may restrict to one of the conjunctions, reducing to the case
  where the existential formula has the form
  \begin{align*}
    \exists \vec{x}_1, \ldots, \vec{x}_n, \vec{y}, \vec{\xi}_1,
    \ldots, \vec{\xi}_n :&  \bigwedge_{j = 1}^m
    \left(P_i(\vec{x}_1,\ldots,\vec{x}_n,\vec{y}) = 0\right) \\ \wedge &
    \left(Q(\vec{x}_1,\ldots,\vec{x}_n,\vec{y}) \ne 0\right) \\ \wedge & \bigwedge_{i = 1}^n
    \left(\res_i(\vec{x}_i) = \vec{\xi}_n\right) \\ \wedge & \bigwedge_{i =
      1}^n R_i(\vec{\xi}_i),
  \end{align*}
  where the $P_i$ and $Q$ are polynomials over $K$.

  Fix some tuple
  $(\vec{a}'_1,\ldots,\vec{a}'_n,\vec{b}',\vec{\alpha}'_1,\ldots,\vec{\alpha}'_n)$
  in the big model $K'$ witnessing the existential statement.  Adding
  more $P_i$, we may assume that the $P_i$ cut out an irreducible
  affine variety $V$ over $K$, namely the locus of
  $(\vec{a}'_1,\ldots,\vec{a}'_n,\vec{b}')$ over $K$.  Let $V\setminus W$
  be the Zariski open subset of $V$ cut out by $Q \ne 0$.

  By Corollary~\ref{3-sorted-consequence}, we can find quantifier-free
  $\mathcal{L}_{rings}$ formulas $R'_i(\vec{\xi}_i)$ such that in both
  $K$ and $K'$,
  \begin{equation*}
    R'_i(\vec{\xi}_i) \iff \exists
    (\vec{x}_1,\ldots,\vec{x}_n,\vec{y}) \in V \setminus W :
    \res(\vec{x}_i) = \vec{\xi}_i.
  \end{equation*}
  In particular, $R'_i(\vec{\alpha}'_i)$ holds.  Replacing
  $R_i(\vec{\xi}_i)$ with $R_i(\vec{\xi}_i) \wedge R'_i(\vec{\xi}_i)$,
  we may assume that
  \begin{equation}
    R_i(\vec{\xi}_i) \implies \exists (\vec{x}_1,\ldots,\vec{x}_n,\vec{y})
    \in V \setminus W : \res(\vec{x}_i) = \vec{\xi}_i.
    \label{well}
  \end{equation}
  Each residue field $k_i$ is a model of $T_i$, hence existentially
  closed in $k'_i$.  Therefore we can find
  $\vec{\alpha}_1,\ldots,\vec{\alpha}_n$ in $k_1,\ldots,k_n$ such
  that $R_i(\vec{\alpha}_i)$ holds.  Let $X_i$ be
  \begin{equation*}
    X_i := \{(\vec{x}_1,\ldots,\vec{x}_n,\vec{y}) \in V(K) :
    \res_i(\vec{x}_i) = \vec{\alpha}_i.\}
  \end{equation*}
  Each $X_i \setminus W$ is non-empty by (\ref{well}) and choice of
  $\vec{\alpha}_i$.  Moreover, each $X_i \setminus W$ is an
  $\Oo_i$-adically open subset of $V(K)$.  By independence, the
  intersection $\bigcap_i (X_i \setminus W)$ is non-empty.  Let
  $(\vec{a}_1,\ldots,\vec{a}_n,\vec{b})$ be a point in the
  intersection.  Then
  \begin{enumerate}
  \item $(\vec{a}_1,\ldots,\vec{a}_n,\vec{b})$ lies on $V(K)
    \setminus W$.
  \item $\res_i(\vec{a}_i) = \vec{\alpha}_i$ because
    $(\vec{a}_1,\ldots,\vec{a}_n,\vec{b}) \in X_i$.
  \item $R_i(\vec{\alpha}_i)$ holds for each $i$, by choice of
    $\vec{\alpha}_i$.
  \end{enumerate}
  Therefore, the existential statement holds in $K$, witnessed by
  $(\vec{a}_1,\ldots,\vec{a}_n,\vec{b},\vec{\alpha}_1,\ldots,\vec{\alpha}_n)$.
  So $K$ is existentially closed.
\end{proof}

\section{The theories $T_P$ and $T_P^0$} \label{cast-of-characters}
\begin{definition}
  A \emph{tree} is a $\wedge$-semilattice $P$ with bottom element
  $\bot$ such that for every $x \in P$, the interval $[\bot,x]$ is
  totally ordered.  A \emph{homomorphism} of trees is a
  $\wedge$-semilattice homomorphism mapping $\bot$ to $\bot$.
\end{definition}
The set $\Val(K)$ of valuation rings on a field $K$ is naturally a
tree, after reversing the order.  The join operation is
\begin{equation*}
  \Oo_1 \vee \Oo_2 = \Oo_1 \cdot \Oo_2.
\end{equation*}

Fix a finite tree $P$.
\begin{definition}
  The theory $T_P$ is the theory of algebraically closed fields $K$
  with injective tree homomorphisms
  \begin{align*}
    P & \hookrightarrow \Val(K)^{op} \\
    p & \mapsto \mathcal{O}_p.
  \end{align*}
\end{definition}
In other words, a model of $T_P$ is a structure $(K,\mathcal{O}_p : p
\in P)$ where
\begin{itemize}
\item $K$ is an algebraically closed field.
\item $\Oo_p$ is a valuation ring for each $p \in P$.
\item If $p < p'$, then $\Oo_p \supsetneq \Oo_{p'}$.
\item $\Oo_\bot = K$.
\item For any $p, p' \in P$
  \begin{equation*}
    \Oo_{p \wedge p'} = \Oo_p \cdot \Oo_{p'}.
  \end{equation*}
\end{itemize}
\begin{example}\label{flat-example}
  Let $P$ be the flat tree $\{1, \ldots, n, \bot\}$ in which
  \begin{equation*}
    a \wedge b = 
    \begin{cases}
      \bot & \text{ if } a \ne b \\
      a & \text{ if } a = b.
    \end{cases}
  \end{equation*}
  Then a model of $T_P$ is essentially a structure
  $(K,\Oo_1,\ldots,\Oo_n)$ where $K$ is an algebraically closed field
  and the $\Oo_i$ are pairwise independent non-trivial valuation
  rings.
\end{example}
\begin{example} \label{acvnf-tp}
  Let $K$ be an algebraically closed field and let $\Oo_1, \ldots,
  \Oo_n$ be finitely many arbitrary valuation rings on $K$.  Let $P$
  be the (finite) sub-$\wedge$-semilattice of $Val(K)^{op}$ generated
  by $\{\Oo_1, \ldots, \Oo_n, K\}$.  Then $P$ is a tree, and
  $(K,\Oo_1,\ldots,\Oo_n)$ is bi-interpretable with the model
  $(K,\Oo_p : p \in P) \models T_P$, where $p \mapsto \Oo_p$ is the
  tautological map.
\end{example}
\begin{definition}
  Let $P$ be a finite tree.  Let $T^0_P$ be the theory
  whose models are structures $(K,\Oo_p : p \in P)$ where $K$ is a
  field and
  \begin{equation*}
    p \mapsto \Oo_p
  \end{equation*}
  is a weakly order-preserving map $P \to Val(K)^{op}$ sending $\bot$
  to $K$.
\end{definition}
In other words, a model of $T^0_P$ is a structure $(K,\Oo_p : p \in
P)$ where
\begin{itemize}
\item $K$ is a field (not necessarily algebraically closed).
\item $\Oo_p$ is a valuation ring for each $p \in P$.
\item If $p < p'$, then $\Oo_p \supseteq \Oo_{p'}$ (but the inclusion needn't be strict).
\item $\Oo_\bot = K$.
\end{itemize}
Note that for any $p, p' \in P$,
\begin{equation*}
  \Oo_{p \wedge p'} \supseteq \Oo_p \cdot \Oo_{p'}
\end{equation*}
but equality needn't hold.
\begin{example}
  If $P$ is the tree of Example~\ref{flat-example}, then a
  model of $T^0_P$ is a field $K$ with $n$ valuation rings on it.
\end{example}

\begin{theorem}
  $T_P$ is the model companion of $T_P^0$.
\end{theorem}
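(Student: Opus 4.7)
The plan is to prove this by induction on $|P|$. The base case $P = \{\bot\}$ reduces to the classical fact that $\mathrm{ACF}$ is the model companion of the theory of fields. For the inductive step, let $p_1, \ldots, p_n$ be the minimal non-$\bot$ elements of $P$, with branches $P_i = \{p \in P : p \ge p_i\}$. By the inductive hypothesis, each $T_{P_i}$ is the model companion of $T_{P_i}^0$; in particular each $T_{P_i}$ is model complete and every model of $T_{P_i}^0$ is a model of $(T_{P_i})_\forall$. I would then apply Lemma~\ref{mildly-annoying} with $T_i = T_{P_i}$, obtaining a model-complete $(n+1)$-sorted theory $T^*$ that is the model companion of the weaker theory of the lemma.

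The heart of the argument is a definitional equivalence between $T_P$ and $T^*$. Given $(K, \Oo_p : p \in P) \models T_P$, take the $n$ valuations in the multi-sorted structure to be $\Oo_{p_1}, \ldots, \Oo_{p_n}$: these are non-trivial (by injectivity of $P \hookrightarrow \Val(K)^{\mathrm{op}}$) and pairwise independent, since $p_i \wedge p_j = \bot$ forces $\Oo_{p_i} \cdot \Oo_{p_j} = \Oo_\bot = K$. The canonical isomorphism $\Val(K | \Oo_{p_i}) \cong \Val(\res \Oo_{p_i})$ transports $\{\Oo_p : p \in P_i\}$ to a $T_{P_i}$-model structure on $k_i := \res \Oo_{p_i}$. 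Conversely, from a $T^*$-structure one lifts the valuation rings on each $k_i$ back up to $K$ via the inverse isomorphism. The only nontrivial verification is the join condition of $T_P$ across different branches: if $p \in P_i$ and $p' \in P_j$ with $i \ne j$, we need $\Oo_p \cdot \Oo_{p'} = K$. This follows from the tree-theoretic fact that if $a \wedge b = \bot$ with $a, b > \bot$, then $x \wedge y = \bot$ whenever $x \ge a$ and $y \ge b$, applied to $a = \Oo_{p_i}$ and $b = \Oo_{p_j}$ in $\Val(K)^{\mathrm{op}}$. Since $T^*$ is model complete, so is $T_P$.

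For the embedding property, let $(K, \Oo_p : p \in P) \models T_P^0$. The same transport endows each $k_i := \res \Oo_{p_i}$ with a $T_{P_i}^0$-structure, hence a $(T_{P_i})_\forall$-structure by induction. Therefore $(K, k_1, \ldots, k_n)$ is a model of the weaker multi-sorted theory in Lemma~\ref{mildly-annoying}, so by that lemma it embeds into a $T^*$-model, which under the definitional equivalence is a $T_P$-model extending the original structure. Together with model completeness of $T_P$, this establishes that $T_P$ is the model companion of $T_P^0$.

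The main obstacle is verifying the definitional equivalence $T_P \equiv T^*$, and in particular the inheritance of the join condition across branches via the tree fact isolated above, plus the naturality of the correspondence $\Val(K|\Oo_{p_i}) \cong \Val(\res \Oo_{p_i})$ under field embeddings (so that the lifted $\Oo_p$ predicates are preserved). Everything else is routine bookkeeping.
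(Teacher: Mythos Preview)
Your proposal is correct and follows essentially the same strategy as the paper: induct on the tree, apply Lemma~\ref{mildly-annoying} with $T_i = T_{P_i}$, and transfer via the correspondence between the $(n+1)$-sorted structures and $T_P^0$-structures (what the paper phrases as an equivalence of categories of models with embeddings). You are in fact slightly more explicit than the paper about the cross-branch join condition and the naturality under embeddings; the only quibble is that ``definitional equivalence'' is not quite the right name for the one-sorted/multi-sorted correspondence, but the content you use is exactly the equivalence-of-categories statement the paper invokes.
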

\begin{proof}
  Let $a_1, \ldots, a_n$ enumerate the minimal elements of $P
  \setminus \{\bot\}$.  Let $P_i$ be the subposet $\{x \in P : x \ge
  a_i\}$.  Note that $P_i$ is a finite tree with bottom
  element $a_i$.  By induction, $T_{P_i}^0$ is the model companion of
  $T_{P_i}$.  Let $T$ be the theory of $(n+1)$-sorted structures
  $(K,k_1,\ldots,k_n)$ with field structure on $K$, residue maps
  $\res_i : K \rightsquigarrow k_i$ for each $i$, and with
  $T_{P_i}^0$-structure on each $k_i$.

  Given a model $(K,k_1,\ldots,k_n)$ of $T$, we get a model $(K,
  \Oo^K_p : p \in P)$ of $T_P^0$ by defining
  \begin{itemize}
  \item $\Oo_\bot$ to be $K$.
  \item $\Oo_p$ to be the composition of $K \rightsquigarrow k_i$ with
    $\Oo^{k_i}_p$, if $p \ge a_i$.
  \end{itemize}
  This gives an equivalence of categories from the category of models
  of $T$ (with morphisms the \emph{embeddings}) to the category of of
  models of $T_P^0$ (with morphisms the embeddings).  Moreover, this
  equivalence of categories sends elementary embeddings to elementary
  embeddings in both directions.
  
  By Lemma~\ref{mildly-annoying}, $T$ has a model companion $T'$ whose
  models are characterized by the following additional axioms:
  \begin{enumerate}
  \item $K$ is algebraically closed.
  \item Each valuation ring $\Oo_i$ is non-trivial.
  \item The valuation rings $\Oo_i$ are pairwise independent.
  \item Each residue field $k_i$ is a model of $T_{P_i}$.
  \end{enumerate}
  Under the equivalence of categories, models of $T'$ correspond to
  models of $T_P$.  Therefore, $T_P$ is the model companion of
  $T_P^0$.
\end{proof}

\section{Controlled amalgamation in ACVF} \label{controlled-amalg}
\begin{definition}
  A ring homomorphism $f : R \to K$ to a field $K$ is \emph{dominant}
  if $K$ is generated as a field by $\Img(f)$.
\end{definition}
For a fixed ring $R$, dominant morphisms out of $R$ are classified up
to equivalence by prime ideals of $R$.

By \emph{the category of fields} we mean the full subcategory of the
category of rings.  Note that homomorphisms are embeddings.
\begin{definition}
  Let
  \begin{equation*}
    \xymatrix{ F \ar[r] \ar[d] & K_1 \\ K_2 & }
  \end{equation*}
  be a diagram in the category of fields.
  \begin{itemize}
  \item An \emph{amalgamation} of $K_1$ and $K_2$ over $F$ is a
    diagram
    \begin{equation*}
      \xymatrix{F \ar[r] \ar[d] & K_1 \ar[d] \\ K_2 \ar[r] & L}
    \end{equation*}
    extending the given diagram.  When the maps $K_i \to L$ are clear,
    one says that \emph{$L$ is an amalgamation of $K_1$ and $K_2$ over
      $F$}.
  \item Two amalgamations $L$ and $L'$ are equivalent if there is an
    isomorphism $L \to L'$ such that
    \begin{equation*}
      \xymatrix{ K_1 \ar[r] \ar[d] & L' \\ L \ar[ur] & K_2 \ar[l] \ar[u]}
    \end{equation*}
    commutes.
  \item An amalgamation $L$ is \emph{reduced} if $L$ is the compositum
    $K_1'K_2'$, where $K_i'$ is the image of $K_i \to L$.
    Equivalently, $L$ is reduced if the morphism
    \begin{equation*}
      K_1 \otimes_F K_2 \to L
    \end{equation*}
    is dominant.
  \item The \emph{reduction} of an amalgamation $L$ is the subfield
    $K_1'K_2'$ of $L$, where $K_i'$ is the image of $K_i \to L$.
  \item An \emph{amalgamation type} is an equivalence class of reduced
    amalgamations, or equivalently, a prime ideal in $K_1 \otimes_F
    K_2$.
  \item \emph{The amalgamation type} of an amalgamation $L$ is the
    equivalence class of the reduction, or equivalently, $\ker K_1
    \otimes_F K_2 \to L$.
  \item If $K_1 \otimes_F K_2$ is a domain, the \emph{independent
    amalgamation type} is the amalgamation type corresponding to the
    zero ideal $(0) \le K_1 \otimes_F K_2$, and an \emph{independent
      amalgamation} is one of independent type.
  \end{itemize}
\end{definition}

\begin{lemma}\label{a-fine-lemma}
  Let
  \begin{equation*}
    \xymatrix{ K_0 \ar[r] \ar[d] & K_1 \\ K_2 & }
  \end{equation*}
  be a diagram of embeddings of valued fields $(K_i,\Oo_i)$.  Then the
  natural ring homomorphism
  \begin{equation*}
    \Oo_1 \otimes_{\Oo_0} \Oo_2 \to K_1 \otimes_{K_0} K_2
  \end{equation*}
  is injective.
\end{lemma}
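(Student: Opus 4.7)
The plan is to identify $K_1 \otimes_{K_0} K_2$ with a localization of $\Oo_1 \otimes_{\Oo_0} \Oo_2$, and then use flatness of each $\Oo_i$ over $\Oo_0$ (coming from the valuation-ring property) to show that the localization kills nothing.

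First, I would observe that $K_1 \otimes_{K_0} K_2$ is canonically isomorphic to $(\Oo_1 \otimes_{\Oo_0} \Oo_2)[T^{-1}]$, where $T$ is the image of the multiplicative set generated by $(\Oo_1 \setminus \{0\}) \otimes 1$ and $1 \otimes (\Oo_2 \setminus \{0\})$.  This is a standard compatibility of tensor product with localization; one direct way to see it is to note that $K_0 \otimes_{\Oo_0} K_i = K_i$ (because $\Oo_0 \setminus \{0\}$ is already inverted inside $K_i$), so that $K_1 \otimes_{K_0} K_2 \cong K_1 \otimes_{\Oo_0} K_2 = (\Oo_1 \otimes_{\Oo_0} \Oo_2)[T^{-1}]$.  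Consequently, an element $z \in \Oo_1 \otimes_{\Oo_0} \Oo_2$ maps to $0$ in $K_1 \otimes_{K_0} K_2$ if and only if $(s_1 \otimes s_2) \cdot z = 0$ in $\Oo_1 \otimes_{\Oo_0} \Oo_2$ for some $s_i \in \Oo_i \setminus \{0\}$.

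Second, I would invoke the standard fact that a torsion-free module over a valuation ring is flat.  Since $\Oo_0$ is a valuation ring and each $\Oo_i$ is a domain extension of $\Oo_0$, each $\Oo_i$ is a torsion-free, hence flat, $\Oo_0$-module.  Multiplication by any $s_1 \in \Oo_1 \setminus \{0\}$ is an injection $\Oo_1 \hookrightarrow \Oo_1$ (as $\Oo_1$ is a domain); tensoring this injection with the flat $\Oo_0$-module $\Oo_2$ yields injectivity of multiplication by $s_1 \otimes 1$ on $\Oo_1 \otimes_{\Oo_0} \Oo_2$.  By symmetry, multiplication by $1 \otimes s_2$ is injective for $s_2 \in \Oo_2 \setminus \{0\}$.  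Composing, multiplication by $s_1 \otimes s_2 = (s_1 \otimes 1)(1 \otimes s_2)$ is injective on $\Oo_1 \otimes_{\Oo_0} \Oo_2$.  Combined with the first step, this shows that the kernel of the map in question is trivial.

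Neither step is deep.  The one place that calls for care is the identification of $K_1 \otimes_{K_0} K_2$ as a localization of $\Oo_1 \otimes_{\Oo_0} \Oo_2$; if one prefers to avoid comparing $\Oo_0$-tensor products with $K_0$-tensor products, a concrete alternative is to clear denominators and observe that every element of $K_1 \otimes_{K_0} K_2$ has the form $(s_1 \otimes s_2)^{-1} \cdot z$ for some $s_i \in \Oo_i \setminus \{0\}$ and $z \in \Oo_1 \otimes_{\Oo_0} \Oo_2$, reducing the statement directly to the flatness argument above.
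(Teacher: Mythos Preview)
Your proof is correct and uses essentially the same idea as the paper: both arguments rest on the fact that a torsion-free module over a valuation ring is flat, and both identify $K_1 \otimes_{K_0} K_2$ with $K_1 \otimes_{\Oo_0} K_2$. The paper simply factors the map as the chain $\Oo_1 \otimes_{\Oo_0} \Oo_2 \hookrightarrow \Oo_1 \otimes_{\Oo_0} K_2 \hookrightarrow K_1 \otimes_{\Oo_0} K_2 \cong K_1 \otimes_{K_0} K_2$ and checks each step, whereas you phrase the same content as ``the localization map kills nothing because every $s_1 \otimes s_2$ is a non-zerodivisor''; the underlying flatness argument is identical.
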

\begin{proof}
  Because $\Oo_1$ is torsionless as
  an $\Oo_0$-module, it is flat.  Therefore, the natural map
  \begin{equation*}
    \Oo_1 \otimes_{\Oo_0} \Oo_2 \to \Oo_1 \otimes_{\Oo_0} K_2
  \end{equation*}
  is an injection.  Similarly, $K_2$ is a flat $\Oo_0$-module, so
  \begin{equation*}
    \Oo_1 \otimes_{\Oo_0} K_2 \to K_1 \otimes_{\Oo_0} K_2
  \end{equation*}
  is injective.  Finally, the map
  \begin{equation*}
    K_1 \otimes_{\Oo_0} K_2 \to K_1 \otimes_{K_0} K_2
  \end{equation*}
  is an isomorphism because $\Oo_0 \to K_0$ is a (category-theoretic)
  epimorphism and tensor products are pushouts in the category of
  rings.
\end{proof}
\begin{remark}\label{injtastic}
  If $f : A \hookrightarrow B$ is an injective homomorphism of rings,
  then every minimal prime of $A$ extends to a prime of $B$.  Indeed,
  if $\pp$ is a minimal prime of $A$, let $S = A \setminus \pp$.
  Injectivity of $f$ implies that $f(S)$ is a multiplicative subset of
  $B$ not containing zero, so the localization $f(S)^{-1}B$ is
  non-trivial.  Any prime ideal of $f(S)^{-1}B$ pulls back to a prime
  in $A$ contained in $\pp$, hence equal to $\pp$ by minimality.
\end{remark}
\begin{remark}\label{local-hom}
  The category of valued fields and embeddings is equivalent to the
  category of valuation rings and injective local homomorphisms (i.e.,
  injective ring homomorphisms $f : \Oo_1 \to \Oo_2$ such that
  $f^{-1}(\mm_2) = \mm_1$).
\end{remark}

\begin{lemma}\label{amalgamation-master}
  Let
  \begin{equation*}
    \xymatrix{ K_0 \ar[r] \ar[d] & K_1 \\ K_2 & }
  \end{equation*}
  be a diagram of embeddings of valued fields.  Let $\Oo_i$ and $k_i$
  be the valuation ring and residue field of $K_i$.  Given any
  amalgamation type $\tau$ of $k_1$ and $k_2$ over $k_0$, there
  exists an amalgamation of valued fields:
  \begin{equation*}
    \xymatrix { K_0 \ar[r] \ar[d] & K_1 \ar[d] \\ K_2 \ar[r] & L}
  \end{equation*}
  such that
  \begin{enumerate}
  \item If $\ell$ denotes the residue field of $L$, then the
    amalgamation type of $\ell$ over $k_1$ and $k_2$ is $\tau$.
  \item If $K_1 \otimes_{K_0} K_2$ is a domain, then $L$ is an
    independent amalgamation.
  \item $L$ is a reduced amalgamation of $K_1$ and $K_2$.
  \end{enumerate}
\end{lemma}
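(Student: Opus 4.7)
The plan is to realize $L$ as the fraction field of a suitable quotient of $R := \Oo_1 \otimes_{\Oo_0} \Oo_2$, with the valuation ring produced by Chevalley's extension theorem. The starting observation is
\[
R / (\mm_1 R + \mm_2 R) \;\cong\; k_1 \otimes_{k_0} k_2,
\]
so the prime $\bar\pp$ of $k_1 \otimes_{k_0} k_2$ classifying $\tau$ pulls back to a prime $\pp \subseteq R$ containing $\mm_1 R + \mm_2 R$. Fix a minimal prime $\pp_{\min} \subseteq \pp$ of $R$, set $R_0 := R/\pp_{\min}$, and let $F_0 := \Frac R_0$.

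The key point, where Lemma~\ref{a-fine-lemma} and Remark~\ref{injtastic} both enter, is that $\pp_{\min} \cap \Oo_i = 0$: the embedding $R \hookrightarrow A := K_1 \otimes_{K_0} K_2$ lets us extend $\pp_{\min}$ to a prime $\qq$ of $A$, and $\qq \cap K_i = 0$ since $K_i$ is a field. Hence $\Oo_i \hookrightarrow R_0$ and $K_i \hookrightarrow F_0$. Setting $\pp_0 := \pp/\pp_{\min}$, the local ring $R_{0,\pp_0} \subseteq F_0$ has residue field $\Frac(R/\pp)$, which is the reduced amalgamation of $k_1$ and $k_2$ over $k_0$ of type $\tau$. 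By Chevalley's extension theorem there is a valuation ring $\Oo_L$ on $L := F_0$ dominating $R_{0,\pp_0}$.

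To verify the conclusions: since $\pp$ is a proper prime containing $\mm_i R$, we have $\pp \cap \Oo_i = \mm_i$, and domination gives $\mm_{\Oo_L} \cap \Oo_i = \mm_i$, from which the standard classification of valuation overrings of $\Oo_i$ in $K_i$ forces $\Oo_L \cap K_i = \Oo_i$. For (1), the composite $R \to \Oo_L \to \res \Oo_L$ has kernel $\pp$, so the induced map $k_1 \otimes_{k_0} k_2 \to \res \Oo_L$ has kernel $\bar\pp$, giving the residue amalgamation type $\tau$. Condition (3) holds because $L = \Frac R_0$ is generated as a field by $K_1 \cup K_2$. For (2), if $A$ is a domain then so is its subring $R$, forcing $\pp_{\min} = (0)$; since $A$ is then a localization of $R$, it embeds into $L = \Frac R$, making $L$ an independent amalgamation. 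The only nonmechanical step is the existence of a minimal prime of $R$ meeting each $\Oo_i$ trivially, for which Lemma~\ref{a-fine-lemma} is essential; the rest is routine tracking of prime ideals under residue and localization maps, combined with Chevalley extension.
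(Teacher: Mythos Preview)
Your argument is correct and is essentially the same as the paper's proof: both pull back the prime of $k_1 \otimes_{k_0} k_2$ to $R = \Oo_1 \otimes_{\Oo_0} \Oo_2$, choose a minimal prime below it, use Lemma~\ref{a-fine-lemma} and Remark~\ref{injtastic} to see this minimal prime meets each $\Oo_i$ trivially, and then apply Chevalley extension. The only cosmetic difference is that you pass explicitly to $R_0 = R/\pp_{\min}$ and dominate the local ring $R_{0,\pp_0}$, whereas the paper phrases Chevalley as producing a map $R \to \Oo_3$ with prescribed pullbacks of $(0)$ and $\mm_3$; these are equivalent formulations.
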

\begin{proof}
  Requirement (3) is trivial to arrange, by replacing $L$ with its
  subfield generated by $K_1$ and $K_2$.  So we will forget
  requirement (3).

  Let $\nn$ be the prime ideal of $k_1 \otimes_{k_0} k_2$ associated
  to the amalgamation type $\tau$.  Let $\pp_1$ be the pullback of
  $\nn$ under the surjective ring homomorphism
  \begin{equation*}
    \Oo_1 \otimes_{\Oo_0} \Oo_2 \twoheadrightarrow k_1 \otimes_{k_0} k_2.
  \end{equation*}
  Consider the commutative diagram of sets for $i = 1, 2$:
  \begin{equation*}
    \xymatrix{ \Spec k_i \ar[d] & \Spec k_1 \otimes_{k_0} k_2 \ar[l] \ar[d] \\
      \Spec \Oo_i & \Spec \Oo_1 \otimes_{\Oo_0} \Oo_2. \ar[l]}
  \end{equation*}
  There is only one point in the top left set, and it maps to the
  maximal ideal $\mm_i \in \Spec \Oo_i$.  Since $\pp_1$ comes from
  $\nn$ in the top left, the restriction of $\pp_1$ to $\Oo_i$ must be
  $\mm_i$.

  Now let $\pp_0$ be some minimal prime in $\Oo_1 \otimes_{\Oo_0}
  \Oo_2$, chosen to lie below $\pp_1$.  Consider the commutative
  diagram
  \begin{equation*}
    \xymatrix{ \Spec K_i \ar[d] & \Spec K_1 \otimes_{K_0} K_2 \ar[l] \ar[d] \\
      \Spec \Oo_i & \Spec \Oo_1 \otimes_{\Oo_0} \Oo_2. \ar[l]}
  \end{equation*}
  By Lemma~\ref{a-fine-lemma} and Remark~\ref{injtastic}, $\pp_0$
  comes from an element of the top right corner.  Again, $\Spec K_i$
  has only one point and it maps to the zero ideal in $\Spec \Oo_i$,
  so the restriction of $\pp_0$ to $\Oo_i$ must then be the zero
  ideal.

  By the Chevalley Extension Theorem (\cite{PE} Theorem 3.1.1) there
  is a valuation ring $(\Oo_3,\mm_3)$ and a homomorphism
  \begin{equation*}
    \Oo_1 \otimes_{\Oo_0} \Oo_2 \to \Oo_3
  \end{equation*}
  under which $\mm_3$ and $(0)$ pull back to $\pp_1$ and $\pp_0$,
  respectively.  This yields a diagram
  \begin{equation*}
    \xymatrix{ \Oo_0 \ar[r] \ar[d] & \Oo_1 \ar[d] \\ \Oo_2 \ar[r] & \Oo_3}.
  \end{equation*}
  Under the composition
  \begin{equation*}
    \Oo_1 \to \Oo_1 \otimes_{\Oo_0} \Oo_2 \to \Oo_3,
  \end{equation*}
  the prime ideals $\mm_3$ and $(0)$ pull back to $\pp_1$ and $\pp_0$,
  and then to $\mm_1$ and $(0)$, respectively.  It follows that $\Oo_1
  \to \Oo_3$ is an injective local homomorphism.  The same holds for
  $\Oo_2 \to \Oo_3$ similarly.  By Remark~\ref{local-hom}, we get a
  diagram of valued fields
  \begin{equation*}
    \xymatrix { K_0 \ar[r] \ar[d] & K_1 \ar[d] \\ K_2 \ar[r] & L}
  \end{equation*}
  From the induced diagram
  \begin{equation*}
    \xymatrix{\Oo_1 \otimes_{\Oo_0} \Oo_2 \ar[d] \ar[r] & k_1 \otimes_{k_0} k_2 \ar[d] \\
      \Oo_3 \ar[r] & \ell}
  \end{equation*}  
  we see that the kernel of $k_1 \otimes_{k_0} k_2 \to \ell$ pulls
  back to $\pp_1$ on $\Oo_1 \otimes_{\Oo_0} \Oo_2$.  So $\ker k_1
  \otimes_{k_0} k_2 \to \ell$ and $\nn$ have the same image under the
  injection $\Spec k_1 \otimes_{k_0} k_2 \hookrightarrow \Spec \Oo_1
  \otimes_{\Oo_0} \Oo_2$.  Therefore, the kernel equals $\nn$, so the
  amalgamation type of $\ell$ is $\tau$ as desired.

  Finally, suppose that $K_1 \otimes_{K_0} K_2$ is a domain.  By
  Lemma~\ref{a-fine-lemma}, the map $\Oo_1 \otimes_{\Oo_0} \Oo_2 \to
  K_1 \otimes_{K_0} K_2$ is injective, so $\Oo_1 \otimes_{\Oo_0}
  \Oo_2$ is a domain.  The minimal prime $\pp_0$ must then be the zero
  ideal $(0)$.  Now, in the diagram
  \begin{equation*}
    \xymatrix{\Oo_0 \ar[rr] \ar[dr] \ar[dd] & & \Oo_1 \ar[dd] \ar[dr] & \\
      & K_0 \ar[rr] \ar[dd] & & K_1 \ar[dd] \\
      \Oo_2 \ar[rr] \ar[dr] & & \Oo_1 \otimes_{\Oo_0} \Oo_2 \ar[dd] \ar[dr] & \\
      & K_2 \ar[rr] & & K_1 \otimes_{K_0} K_2 \\
      & & \Oo_3 & }
  \end{equation*}
  every ring is a domain, and moreover \emph{every} morphism is an
  injection:
  \begin{itemize}
  \item The maps $\Oo_i \to K_i$ are injections by definition.
  \item The map $\Oo_1 \otimes_{\Oo_0} \Oo_2 \to K_1 \otimes_{K_0}
    K_2$ is an injection by Lemma~\ref{a-fine-lemma}.
  \item The maps out of $K_0, K_1, K_2$ are injections because the
    $K_i$ are fields.
  \item The maps $\Oo_i \to \Oo_1 \otimes_{\Oo_0} \Oo_2$ are injective
    because the diagram commutes, and the other path from $\Oo_i$ to
    $K_1 \otimes_{K_0} K_2$ is made of injections.
  \item The map $\Oo_1 \otimes_{\Oo_0} \Oo_2 \to \Oo_3$ is injective
    because the pullback of $(0)$ under this map was $\pp_0 = (0)$, by
    choice of $\Oo_3$.
  \end{itemize}
  Therefore, the above diagram belongs to the category of domains and
  embeddings.  Applying the functor $\Frac(-)$ yields the diagram
  \begin{equation*}
    \xymatrix{K_0 \ar[rr] \ar@{=}[dr] \ar[dd] & & K_1 \ar[dd] \ar@{=}[dr] & \\
      & K_0 \ar[rr] \ar[dd] & & K_1 \ar[dd] \\
      K_2 \ar[rr] \ar@{=}[dr] & & \Frac(\Oo_1 \otimes_{\Oo_0} \Oo_2) \ar[dd] \ar[dr] & \\
      & K_2 \ar[rr] & & \Frac(K_1 \otimes_{K_0} K_2) \\
      & & L & }
  \end{equation*}
  This diagram contains three amalgamations of $K_1$ and $K_2$ over
  $K_0$, namely $L$, $\Frac(\Oo_1 \otimes_{\Oo_0} \Oo_2)$ and
  $\Frac(K_1 \otimes_{K_0} K_2)$.  Moreover, the diagram shows that
  they have the same amalgamation type.  By definition, $\Frac(K_1
  \otimes_{K_0} K_2)$ has the independent amalgamation type.  Thus $L$
  is also an independent amalgamation.
\end{proof}
\begin{remark}
  Lemma~\ref{amalgamation-master} can be used to prove quantifier
  elimination in ACVF.  The Lemma implies that the class of valued
  fields has the amalgamation property.  By abstract nonsense, it only
  remains to prove that models of ACVF are 1-e.c., in other words,
  \begin{equation*}
    M_1 \models \exists x : \varphi(x;\vec{b}) \implies M_2 \models
    \exists x : \varphi(x;\vec{b})
  \end{equation*}
  for any extension $M_1 \le M_2$ of models, tuple $\vec{b}$ from
  $M_1$, and quantifier-free formula $\varphi(x;\vec{y})$ \emph{with
    $x$ a singleton}.  The 1-e.c.\ property can be verified in a
  straightforward fashion from the swiss cheese decomposition of
  \emph{quantifier-free definable} sets.
\end{remark}
\begin{remark}
  Lemma~\ref{amalgamation-master} implies amalgamation for the class
  of two-sorted structures $(K,k)$ where $K$ is a valued field and $k$
  is an \emph{extension} of the residue field.  Indeed, suppose we are given a
  diagram
  \begin{equation*}
    \xymatrix{ (K_0,k_0) \ar[r] \ar[d] & (K_1,k_1) \\ (K_2,k_2) & }
  \end{equation*}
  of embeddings of such structures.  First, amalgamate $k_1$ and $k_2$
  over $k_0$ into a monster model $\Mm$ of ACF:
  \begin{equation*}
    \xymatrix{ k_0 \ar[r] \ar[d] & k_1 \ar[d] \\ k_2 \ar[r] & \Mm.}
  \end{equation*}
  This induces an amalgamation of the residue fields:
  \begin{equation} \label{tau-1}
    \xymatrix{ \res K_0 \ar[r] \ar[d] & \res K_1 \ar[d] \\ \res K_2 \ar[r] & \Mm.}
  \end{equation}
  By Lemma~\ref{amalgamation-master} one can then amalgamate the
  valued fields
  \begin{equation*}
    \xymatrix{ K_0 \ar[r] \ar[d] & K_1 \ar[d] \\ K_2 \ar[r] & L}
  \end{equation*}
  in such a way that
  \begin{equation*}
    \xymatrix{ \res K_0 \ar[r] \ar[d] & \res K_1 \ar[d] \\ \res K_2 \ar[r] & \res L}
  \end{equation*}
  has the same amalgamation type as (\ref{tau-1}).  Because the
  amalgamation types agree, there is an embedding of $\res L$ into
  $\Mm$ such that the diagram commutes:
  \begin{equation*}
    \xymatrix{ \res K_0 \ar[rr] \ar[dr] \ar[dd] & & \res K_1 \ar[dd] \ar[dr] \\
      & k_0 \ar[rr] \ar[dd] & & k_1 \ar[dd] \\
      \res K_2 \ar[dr] \ar[rr] & & \res L \ar[dr] \\
      & k_2 \ar[rr] & & \Mm.}
  \end{equation*}
  The embedding of $\res L$ into $\Mm$ yields a structure $(L,\Mm)$
  and the above diagram means that
  \begin{equation*}
    \xymatrix{ (K_0,k_0) \ar[r] \ar[d] & (K_1,k_1) \ar[d] \\ (K_2,k_2) \ar[r] & (L,\Mm)}
  \end{equation*}
  commutes.
\end{remark}
\begin{remark}\label{2-or-3}
  Amalgamation for the 2-sorted structures $(K,k)$ implies a sort of
  quantifier elimination for 2-sorted ACVF.  Specifically, if $(K,k)$
  is one of these two-sorted structures (possible with $k$ strictly
  greater than $\res K$) then any two embeddings of $(K,k)$ into a
  model of ACVF have the same type.  This implies
  Lemma~\ref{3-sorted-proto-consequence}.
\end{remark}
\begin{lemma}\label{amalg-2}
  Let $K_0 = K_0^{alg}$ and let
  \begin{equation*}
    \xymatrix { K_0 \ar[r] \ar[d] & K_1 \ar[d] \\ K_2 \ar[r] & K_3}
  \end{equation*}
  be an independent amalgamation of fields.  Let $\Oo_1, \Oo_2$ be
  valuation rings on $K_1, K_2$ having the same restriction $\Oo_0$ to
  $K_0$.  Then there is a valuation ring $\Oo_3$ on $K_3$ extending
  $\Oo_1$ and $\Oo_2$ such that the induced amalgamation of residue
  fields
  \begin{equation*}
    \xymatrix { k_0 \ar[r] \ar[d] & k_1 \ar[d] \\ k_2 \ar[r] & k_3}
  \end{equation*}
  is independent.
\end{lemma}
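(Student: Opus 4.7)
The plan is to apply Lemma~\ref{amalgamation-master} with the independent residue amalgamation type, transport the resulting valuation ring to the reduction of $K_3$, and then extend to all of $K_3$ via the Chevalley Extension Theorem.

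Since $K_0 = K_0^{alg}$, the residue field $k_0$ is also algebraically closed; therefore $k_1 \otimes_{k_0} k_2$ is a domain, and the independent amalgamation type $\tau$ of $k_1$ and $k_2$ over $k_0$ is well-defined. Moreover, independence of the given amalgamation means that the map $K_1 \otimes_{K_0} K_2 \to K_3$ is injective, so $K_1 \otimes_{K_0} K_2$ is a domain. Applying Lemma~\ref{amalgamation-master} with this $\tau$ produces a valued field $(L,\Oo_L)$ amalgamating $(K_1,\Oo_1)$ and $(K_2,\Oo_2)$ over $(K_0,\Oo_0)$ such that (i) the residue field amalgamation has type $\tau$, and (ii) $L$ is a reduced independent amalgamation of $K_1, K_2$ over $K_0$, by clauses (1)--(3) of that lemma.

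Now let $K_3' = K_1 \cdot K_2 \subseteq K_3$ be the reduction of the given amalgamation; this is also a reduced independent amalgamation of $K_1, K_2$ over $K_0$. Since both $L$ and $K_3'$ represent the independent amalgamation type as reduced amalgamations, each is canonically isomorphic over $K_1$ and $K_2$ to $\Frac(K_1 \otimes_{K_0} K_2)$, giving a unique isomorphism $L \cong K_3'$ fixing $K_1$ and $K_2$. Transporting $\Oo_L$ along this isomorphism yields a valuation ring $\Oo_3'$ on $K_3'$ extending $\Oo_1$ and $\Oo_2$, whose residue field is an independent amalgamation of $k_1, k_2$ over $k_0$. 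Finally, extend $\Oo_3'$ to a valuation ring $\Oo_3$ on $K_3$ using Chevalley. Writing $k_3' = \res \Oo_3'$ and $k_3 = \res \Oo_3$, the inclusion $k_3' \hookrightarrow k_3$ is a field embedding, so the injection $k_1 \otimes_{k_0} k_2 \hookrightarrow k_3'$ coming from (i) composes to an injection $k_1 \otimes_{k_0} k_2 \hookrightarrow k_3$; hence the residue amalgamation via $k_3$ is independent, as required. The only (mild) obstacle is checking that $L$ and $K_3'$ match up canonically, which is immediate once one observes that both are reduced independent amalgamations; the real content lies in Lemma~\ref{amalgamation-master}.
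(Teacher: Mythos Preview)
Your proof is correct and follows essentially the same route as the paper: apply Lemma~\ref{amalgamation-master} with the independent residue type to obtain a reduced independent valued amalgamation $L$, identify $L$ with the reduction of $K_3$ (since both represent the independent amalgamation type), and then extend the valuation to all of $K_3$, noting that the residue amalgamation type is preserved. The only cosmetic difference is that you invoke Chevalley explicitly for the last extension, whereas the paper simply takes ``any valuation ring on $K_3$ extending the valuation ring on $L$''.
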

\begin{proof}
  Because $K_0$ is algebraically closed, so is $k_0$.  Therefore, $k_1
  \otimes_{k_0} k_2$ is a domain, so it makes sense to talk about the
  independent amalgamation type on the residue fields.  By
  Lemma~\ref{amalgamation-master} there is some amalgamation of valued
  fields
  \begin{equation*}
    \xymatrix { K_0 \ar[r] \ar[d] & K_1 \ar[d] \\ K_2 \ar[r] & L}
  \end{equation*}
  such that
  \begin{itemize}
  \item $L$ is an independent amalgamation of $K_1$ and $K_2$ over $K_0$.
  \item $\res L$ is an independent amalgamation of $k_1$ and $k_2$ over $k_0$.
  \item $L$ is a reduced amalgamation of $K_1$ and $K_2$ over $K_0$.
  \end{itemize}
  By assumption, $K_3$ also has the independent amalgamation type, so
  its reduction is isomorphic to $L$.  Therefore, there is an
  embedding of $L$ into $K_3$ such that the following diagram of pure
  fields commutes:
  \begin{equation*}
    \xymatrix{ K_0 \ar[r] \ar[d] & K_1 \ar[d] \ar[ddr] & \\
      K_2 \ar[r] \ar[rrd] & L \ar[dr] & \\
      & & K_3.}
  \end{equation*}
  Let $\Oo_3$ be any valuation ring on $K_3$ extending the valuation
  ring on $L$.  Then the above diagram becomes a diagram of valued
  fields.  Moreover, the induced diagram of residue fields looks like
  \begin{equation*}
    \xymatrix{ k_0 \ar[r] \ar[d] & k_1 \ar[d] \ar[ddr] & \\
      k_2 \ar[r] \ar[rrd] & \res L \ar[dr] & \\
      & & \res K_3.}
  \end{equation*}
  Thus the amalgamation type of $\res K_3$ over $k_1$ and $k_2$ is the
  same as $\res L$, namely the independent type.
\end{proof}

Recall that $\Val(K)$ denotes the poset of valuation rings on $K$, and
$\Val(K|\Oo)$ denotes the subposet of valuation rings below a given
$\Oo \in \Val(K)$.  If $\Oo' \in \Val(K|\Oo)$, then $\Oo' \div \Oo$
denotes the valuation ring on $\res \Oo$ whose composition with $\Oo$
is $\Oo'$.
\begin{remark}\label{meteor}
  ~
  \begin{enumerate}
  \item If $L/K$ is an extension of fields, there is a restriction map
    \begin{align*}
      \Val(L) \to \Val(K) \\
      \Oo \mapsto \Oo \cap K
    \end{align*}
  \item \label{other-restriction} If $L/K$ is an extension of fields,
    if $\Oo_L$ is a valuation ring on $L$, and if $\Oo_K = \Oo_L \cap
    K$ is the restriction to $K$, then there is a restriction map
    $\Val(L|\Oo_L) \to \Val(K|\Oo_K)$.  Moreover, the diagram commutes
    \begin{equation*}
      \xymatrix{ \Val(L|\Oo_L) \ar[r] \ar[d] & \Val(K|\Oo_K) \ar[d] \\
        \Val(L) \ar[r] & \Val(K)}
    \end{equation*}
    where the vertical maps are inclusions.
  \item\label{petunia} If $K$ is a valued field and $\Oo_K$ is a
    valuation ring on $K$ with residue field $k = \res \Oo_K$, then
    there is a bijection
    \begin{align*}
      \Val(K|\Oo_K) & \to \Val(k) \\
      \Oo & \mapsto \Oo \div \Oo_K.
    \end{align*}
  \item\label{orchid} If $L/K$ is an extension of fields, if $\Oo_L$
    is a valuation ring on $L$, if $\Oo_K = \Oo_L \cap K$ is the
    restriction to $K$, and if $\ell \to k$ is the residue field
    embedding $\res \Oo_L \to \res \Oo_K$, then the diagram
    \begin{equation*}
      \xymatrix{ \Val(L|\Oo_L) \ar[r] \ar[d] & \Val(K|\Oo_K) \ar[d] \\
        \Val(\ell) \ar[r] & \Val(k)}
    \end{equation*}
    commutes, where the vertical maps are the bijections of
    (\ref{petunia}) and the horizontal maps are the restriction maps
    of (\ref{other-restriction}).
  \end{enumerate}
\end{remark}

\begin{lemma}\label{comet}
  Let
  \begin{equation*}
    \xymatrix{(F,\Oo_F) \ar[r] \ar[d] & (K_1,\Oo_{K_1}) \ar[d]
      \\ (K_2,\Oo_{K_2}) \ar[r] & (L,\Oo_L)}
  \end{equation*}
  be a diagram of valued fields.  Suppose $F$ is algebraically closed
  and $\res \Oo_L$ is an independent amalgamation of $\res \Oo_{K_1}$
  and $\res \Oo_{K_2}$ over $\res \Oo_F$.  Given $\Oo'_F \subseteq
  \Oo_F$, $\Oo'_{K_1} \subseteq \Oo_{K_1}$, and $\Oo'_{K_2} \subseteq
  \Oo_{K_2}$ such that $\Oo'_{K_1}$ and $\Oo'_{K_2}$ restrict to
  $\Oo'_F$, there is some $\Oo'_L \subseteq \Oo_L$ extending
  $\Oo'_{K_1}$ and $\Oo'_{K_2}$ such that $\res \Oo'_L$ is an
  independent amalgamation of $\res \Oo'_{K_1}$ and $\res \Oo'_{K_2}$
  over $\res \Oo'_F$.
\end{lemma}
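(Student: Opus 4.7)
The plan is to reduce the statement to a direct application of Lemma~\ref{amalg-2} at the level of residue fields. The key observation is that the bijection of Remark~\ref{meteor}(\ref{petunia}) converts the data of a valuation ring $\Oo' \subseteq \Oo$ on a field $K$ into the equivalent data of a valuation ring $\Oo' \div \Oo$ on $\res \Oo$, and under this translation the hypotheses of the present lemma become exactly the input to Lemma~\ref{amalg-2} for the induced diagram of residue fields.

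To carry this out, I would first set $\mathfrak{o}_F := \Oo'_F \div \Oo_F$ and $\mathfrak{o}_{K_i} := \Oo'_{K_i} \div \Oo_{K_i}$ for $i = 1, 2$, and invoke Remark~\ref{meteor}(\ref{orchid}) to translate the compatibilities $\Oo'_{K_i} \cap F = \Oo'_F$ into the compatibilities $\mathfrak{o}_{K_i} \cap \res \Oo_F = \mathfrak{o}_F$ at the residue level. Next, since $F$ is algebraically closed, so is $\res \Oo_F$, and by hypothesis the square
\begin{equation*}
  \xymatrix{\res \Oo_F \ar[r] \ar[d] & \res \Oo_{K_1} \ar[d] \\ \res \Oo_{K_2} \ar[r] & \res \Oo_L}
\end{equation*}
is an independent amalgamation of fields over the algebraically closed base $\res \Oo_F$. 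So Lemma~\ref{amalg-2} applies and produces a valuation ring $\mathfrak{o}_L \subseteq \res \Oo_L$ extending $\mathfrak{o}_{K_1}$ and $\mathfrak{o}_{K_2}$ such that $\res \mathfrak{o}_L$ is an independent amalgamation of $\res \mathfrak{o}_{K_1}$ and $\res \mathfrak{o}_{K_2}$ over $\res \mathfrak{o}_F$.

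Finally, I would use Remark~\ref{meteor}(\ref{petunia}) in reverse to let $\Oo'_L \subseteq \Oo_L$ be the unique valuation ring on $L$ with $\Oo'_L \div \Oo_L = \mathfrak{o}_L$. Because composing valuations corresponds to composing residue maps, $\res \Oo'_L = \res \mathfrak{o}_L$ and similarly $\res \Oo'_{K_i} = \res \mathfrak{o}_{K_i}$ and $\res \Oo'_F = \res \mathfrak{o}_F$, so the desired independence of the residue-field amalgamation is immediate. The required compatibilities $\Oo'_L \cap K_i = \Oo'_{K_i}$ follow from the commutative square in Remark~\ref{meteor}(\ref{orchid}) applied to the extension $L/K_i$: both $\Oo'_L \cap K_i$ and $\Oo'_{K_i}$ correspond to $\mathfrak{o}_{K_i}$ under the bijection $\Val(K_i|\Oo_{K_i}) \cong \Val(\res \Oo_{K_i})$, and this bijection is injective. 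There is no real obstacle; the argument is a bookkeeping exercise transporting the amalgamation problem to the residue level, where Lemma~\ref{amalg-2} dispatches it directly, and the only non-formal inputs are that lemma together with the observation that $\res \Oo_F$ inherits algebraic closedness from $F$.
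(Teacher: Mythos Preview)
Your proposal is correct and follows essentially the same approach as the paper: both arguments use the bijections of Remark~\ref{meteor}(\ref{petunia}) and the compatibility of Remark~\ref{meteor}(\ref{orchid}) to transport the problem to the residue-field square, and then conclude by a direct application of Lemma~\ref{amalg-2}. Your write-up simply spells out the bookkeeping (defining the $\mathfrak{o}$'s explicitly and checking the restriction compatibilities in both directions) that the paper leaves implicit.
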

\begin{proof}
  This follows from Remark~\ref{meteor}.\ref{orchid} and
  Lemma~\ref{amalg-2}.  Specifically, Remark~\ref{meteor}.\ref{orchid}
  shows the commutativity of the diagram
  \begin{equation*}
    \xymatrix{
      \Val(F|\Oo_F) \ar[dd]    & & \Val(K_1|\Oo_{K_1}) \ar[dd] \ar[ll] & \\
      & \Val(K_2|\Oo_{K_2}) \ar[ul] \ar[dd]   & & \Val(L|\Oo_L) \ar[dd] \ar[ul] \ar[ll]  \\
      \Val(\res \Oo_F)   & & \Val(\res \Oo_{K_1}) \ar[ll]  & \\
      & \Val(\res \Oo_{K_2}) \ar[ul]  & & \Val(\res \Oo_L) \ar[ll] \ar[ul] ,}
  \end{equation*}
  where the vertical maps are bijections.  The problem we need to
  solve is on the upper plane, but the diagram allows us to move the
  problem to the lower plane.  One concludes by applying
  Lemma~\ref{amalg-2} to the diagram
  \begin{equation*}
    \xymatrix{\res \Oo_F \ar[r] \ar[d] & \res \Oo_{K_1} \ar[d]
      \\ \res \Oo_{K_2} \ar[r] & \res \Oo_L.}
  \end{equation*}
\end{proof}

\section{Amalgamation in $T_P$} \label{amalg-in-tp}

\begin{proposition}
  Let $P$ be a finite tree.  Let
  \begin{equation*}
    \xymatrix{ K_0 \ar[r] \ar[d] & K_1 \\ K_2 & }
  \end{equation*}
  be a diagram of embeddings of models of $T_P^0$, with $K_0 =
  K_0^{alg}$.  Then the diagram can be completed to a diagram
  \begin{equation*}
    \xymatrix { K_0 \ar[r] \ar[d] & K_1 \ar[d] \\ K_2 \ar[r] & K_3}
  \end{equation*}
  of embeddings of models of $T_P^0$.  Furthermore, $K_3$ can be
  chosen to be an independent amalgamation of $K_1$ and $K_2$.
\end{proposition}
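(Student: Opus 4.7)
My plan is to construct $K_3$ field-theoretically first as an independent amalgamation, and then extend the valuation rings $\Oo_p^{K_3}$ one $p$ at a time, processing $P$ from bottom to top and invoking Lemma~\ref{comet} at each step to guarantee that residue fields remain independently amalgamated---which is exactly the hypothesis needed to keep going.

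Concretely, I set $K_3 = \Frac(K_1 \otimes_{K_0} K_2)$, which is a field because $K_0$ is algebraically closed; this realizes the independent amalgamation type on pure fields. I then define $\Oo_p^{K_3}$ by induction on $p$ along any linearization of the tree order on $P$, maintaining the following invariant for each already-processed $p$: (i) $\Oo_p^{K_3} \cap K_i = \Oo_p^{K_i}$ for $i = 1, 2$; (ii) whenever $q \le p$ is already processed, $\Oo_q^{K_3} \supseteq \Oo_p^{K_3}$; and (iii) $\res \Oo_p^{K_3}$ is an independent amalgamation of $\res \Oo_p^{K_1}$ and $\res \Oo_p^{K_2}$ over $\res \Oo_p^{K_0}$ (which is sensible, because $\res \Oo_p^{K_0}$ is algebraically closed). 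The base case $p = \bot$ is immediate: take $\Oo_\bot^{K_3} = K_3$, and (iii) reduces to the fact that $K_3$ is an independent field amalgamation of $K_1, K_2$ over $K_0$.

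For the inductive step, suppose $p > \bot$ is next. Because $[\bot, p]$ is a chain in $P$, the element $p$ has a unique immediate predecessor $p'$, which has already been processed. I apply Lemma~\ref{comet} to the diagram
\[
\xymatrix{(K_0, \Oo_{p'}^{K_0}) \ar[r] \ar[d] & (K_1, \Oo_{p'}^{K_1}) \ar[d] \\ (K_2, \Oo_{p'}^{K_2}) \ar[r] & (K_3, \Oo_{p'}^{K_3})}
\]
whose residue-independence hypothesis is precisely invariant (iii) at $p'$, taking the smaller valuations to be $\Oo_p^{K_0}, \Oo_p^{K_1}, \Oo_p^{K_2}$ (which restrict compatibly since the original diagram lives in $T_P^0$). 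This produces $\Oo_p^{K_3} \subseteq \Oo_{p'}^{K_3}$ extending both $\Oo_p^{K_1}$ and $\Oo_p^{K_2}$, with $\res \Oo_p^{K_3}$ again an independent amalgamation. For (ii), any other already-processed $q < p$ satisfies $q \le p'$ since $[\bot, p]$ is totally ordered, so $\Oo_q^{K_3} \supseteq \Oo_{p'}^{K_3} \supseteq \Oo_p^{K_3}$ by the prior instance of (ii). The main obstacle here is rather mild: Lemma~\ref{comet} is engineered exactly for this inductive step, so the only real work is bookkeeping---checking that the residue-independence invariant propagates from $p'$ to $p$ and that the order constraint is preserved, both of which flow directly from the chain property of intervals $[\bot, p]$ in $P$.
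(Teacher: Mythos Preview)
Your proposal is correct and follows essentially the same approach as the paper: set $K_3 = \Frac(K_1 \otimes_{K_0} K_2)$ and build the valuation rings $\Oo_p^{K_3}$ by upward recursion on $p$, using Lemma~\ref{comet} at each step to preserve the invariant that the residue field at $p$ is an independent amalgamation. You have simply spelled out the bookkeeping (the role of the immediate predecessor $p'$ and the chain property of $[\bot,p]$) that the paper leaves implicit.
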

\begin{proof}
  Let $K_3 = \Frac(K_1 \otimes_{K_0} K_2)$.  Let $\Oo^p_i$ and $k^p_i$
  denote the $p$th valuation ring and residue field on $K_i$.  One
  chooses $\Oo^p_3$ on $K_3$ by upwards recursion on $p$, ensuring
  that $\res \Oo^p_3$ is an independent amalgamation of $\res \Oo^p_1$ and
  $\res \Oo^p_2$ over $\res \Oo^p_0$ at each step.  This is possible
  by Lemma~\ref{comet}.
\end{proof}
\begin{corollary}
  In $T_P$, field-theoretic algebraic closure agrees with
  model-theoretic algebraic closure.
\end{corollary}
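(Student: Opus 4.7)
The inclusion $\acl_{\mathrm{field}}(A)\subseteq\acl(A)$ is routine: any field-algebraic element satisfies a polynomial equation over $A$ with only finitely many roots, furnishing a witnessing formula. For the reverse inclusion, I would show that any $a\in M\models T_P$ with $a\notin\acl_{\mathrm{field}}(A)$ has infinitely many realizations of $\tp(a/A)$ in a saturated elementary extension.

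First I reduce to the case of an algebraically closed base. Set $F:=\acl_{\mathrm{field}}(A)$, viewed as a subfield of $M$ with the $T_P^0$-structure obtained by restricting each $\Oo_p^M$ to $F$. The easy direction gives $\acl(A)=\acl(F)$, so it suffices to produce infinitely many realizations of $\tp^M(a/F)$. Since $F=F^{alg}$ and $a\notin F$, the element $a$ is transcendental over $F$.

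The main construction iterates the amalgamation proposition. Starting from $N_1:=M$, form $N_{k+1}$ by independently amalgamating $N_k$ with a fresh copy of $M$ over $F$---legitimate at every step because $F=F^{alg}$. After $n$ iterations one obtains a $T_P^0$-structure $N_n=\Frac(M\otimes_F\cdots\otimes_F M)$ (with $n$ factors) together with $n$ embeddings $\iota_1,\ldots,\iota_n\colon M\hookrightarrow N_n$ fixing $F$. Embed $N_n$ into some $M^*\models T_P$, available because $T_P$ is the model companion of $T_P^0$. Model completeness of $T_P$ then makes each composite $\iota_i\colon M\hookrightarrow M^*$ elementary, so each $a_i:=\iota_i(a)$ realizes $\tp^M(a/F)$ in $M^*$. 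To conclude, one verifies that the $a_i$ are pairwise distinct: flatness of $F[a]$ over $F$ makes the inclusion $F[a]\hookrightarrow M$ survive $n$-fold tensoring, giving an injection of the polynomial ring $F[a_1,\ldots,a_n]$ into $M\otimes_F\cdots\otimes_F M\subseteq N_n$ under which each $\iota_i(a)$ corresponds to the distinct indeterminate $a_i$. Hence $\tp(a/F)$ has $\geq n$ realizations in some elementary extension of $M$ for every $n$, and therefore infinitely many in a saturated extension. The only nontrivial step is the distinctness check, which reduces to the standard fact that $n$-fold independent amalgamations of a transcendental extension over an algebraically closed field produce algebraically independent copies of the generator.
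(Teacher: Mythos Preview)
Your argument is correct and follows the same skeleton as the paper's proof: reduce to an algebraically closed base $F$, invoke the amalgamation proposition for $T_P^0$ over $F$, embed the amalgam into a model of $T_P$, and use model completeness to make the embeddings elementary. The difference lies in the endgame.

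The paper performs a \emph{single} independent amalgamation of $M$ with a copy $M'$ over $K=F$, inside some $M''\models T_P$. Rather than producing many realizations of $\tp(a/K)$, it argues by contradiction: if $a\in\acl(K)$, the finite $K$-definable set $X$ of conjugates of $a$ satisfies $X(M)=X(M'')=X(M')$, forcing $a\in M\cap M'$; then ACF-independence of $M$ and $M'$ over $K$ gives $a\forkindep_K a$ in ACF, hence $a\in\acl^{\mathrm{ACF}}(K)=K$. Your route instead iterates the amalgamation $n$ times and checks distinctness of the images of $a$ via the injection $F[X_1,\dots,X_n]\hookrightarrow M^{\otimes_F n}$. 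Both reach the same conclusion from the same ingredients; the paper's version is terser and leans on the ACF forking calculus, while yours is more explicit and avoids invoking $\forkindep$, at the cost of tracking the iterated tensor product. Your identification $N_n\cong\Frac(M^{\otimes_F n})$ is justified since each successive $\Frac(N_k\otimes_F M)$ is a localization of $M^{\otimes_F(k+1)}$ (using that $F$ is a field and $M^{\otimes_F(k+1)}$ is a domain), so the distinctness check goes through as you say.
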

\begin{proof}
  Suppose $M \models T_P$ and $K = K^{alg} \le M$.  Suppose $a \in
  \acl(K)$.  We claim $a \in K$.  Take a second copy $M'$ of $M$,
  amalgamated with $M$ independently over $K$ inside a third model
  $M'' \models T_P$.  By model completeness, $M' \preceq M'' \succeq
  M$.  Let $X$ be the $K$-definable finite set of conjugates of $a$.
  Then $a \in X(M) = X(M'') = X(M')$, so $a \in M \cap M'$.  In the
  ACF reduct,
  \begin{equation*}
    M' \forkindep_K M \implies a \forkindep_K a \implies a \in
    \acl(K) \implies a \in K.
  \end{equation*}
\end{proof}
\begin{corollary}
  Let $M_1, M_2$ be two models of $T_P$, let $K_i$ be an algebraically
  closed subfield of $M_i$, and $f : K_1 \to K_2$ be an isomorphism of
  $T_P^0$-structures.  Then $f$ is a partial elementary map.
\end{corollary}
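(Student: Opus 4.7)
The plan is to combine the amalgamation proposition just proved with model completeness of $T_P$. Using $f$, identify $K_1$ with $K_2$ and view both as a common algebraically closed substructure $K \le M_1$ and $K \le M_2$, where $M_1, M_2 \models T_P$ (in particular, models of $T_P^0$). Apply the preceding proposition to the diagram
\begin{equation*}
  \xymatrix{ K \ar[r] \ar[d] & M_1 \\ M_2 & }
\end{equation*}
to obtain a $T_P^0$-structure $M_3$ with commuting embeddings $M_1 \hookrightarrow M_3 \hookleftarrow M_2$.

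Next, since $T_P$ is the model companion of $T_P^0$, embed $M_3$ (as a $T_P^0$-structure) into some $N \models T_P$. We now have a diagram $M_1 \hookrightarrow N \hookleftarrow M_2$ of embeddings inside models of $T_P$. Because $T_P$ is model complete (established in \S\ref{cast-of-characters}), each of these embeddings is elementary. Given a formula $\varphi(\vec{x})$ and a tuple $\vec{a}$ from $K_1$, we then have
\begin{equation*}
  M_1 \models \varphi(\vec{a}) \iff N \models \varphi(\vec{a}) \iff N \models \varphi(f(\vec{a})) \iff M_2 \models \varphi(f(\vec{a})),
\end{equation*}
where the middle equivalence uses that under the identification of $K_1$ with $K_2$ via $f$, the tuples $\vec{a}$ and $f(\vec{a})$ name the same elements of $N$. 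This exhibits $f$ as a partial elementary map.

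There is no serious obstacle here: the amalgamation proposition handles the field-theoretic content, the existence of a $T_P$-extension of the amalgam is simply the fact that every $T_P^0$-structure embeds into a model of its model companion, and model completeness of $T_P$ converts these embeddings into elementary ones. The only subtlety to keep in mind is that $f$ must be an \emph{isomorphism} (not merely an embedding) of $T_P^0$-structures so that the identification $K_1 = K_2 = K$ makes $M_1$ and $M_2$ genuine extensions of a single common base to which amalgamation can be applied.
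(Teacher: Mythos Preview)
Your proposal is correct and takes essentially the same approach as the paper, whose proof reads simply ``Amalgamate $M_1$ and $M_2$ over $K$ and use model completeness of $T_P$.'' You have spelled out the details the paper leaves implicit, including the intermediate step of embedding the $T_P^0$-amalgam into a model of $T_P$ before invoking model completeness.
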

\begin{proof}
  Amalgamate $M_1$ and $M_2$ over $K$ and use model completeness of
  $T_P$.
\end{proof}

\begin{definition}
  If $P$ is a finite tree, then $T^{alg}_P$ is $T^0_P$
  plus the axiom that $K \models ACF$.
\end{definition}
So $T^0_P \vdash T^{alg}_P \vdash T_P$.
\begin{corollary}
  $T_P$ is the model \emph{completion} of $T^{alg}_P$.
\end{corollary}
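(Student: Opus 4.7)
The plan is to derive this from two facts already in hand: that $T_P$ is the model companion of $T_P^0$, and the preceding corollary stating that any $T_P^0$-isomorphism between algebraically closed subfields of models of $T_P$ is a partial elementary map. Recall that $T_P$ being the model completion of $T^{alg}_P$ is equivalent to $T_P$ being the model companion of $T^{alg}_P$ together with $T_P \cup \diag(K_0)$ being complete for every $K_0 \models T^{alg}_P$.

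First I verify that $T_P$ is the model companion of $T^{alg}_P$. Model completeness of $T_P$ is already known. Every model of $T_P$ is immediately a model of $T^{alg}_P$, since the axioms of $T^{alg}_P$ are among those of $T_P$. Conversely, every model of $T^{alg}_P$ is in particular a model of $T_P^0$, and therefore embeds into a model of $T_P$ by the model companion property of $T_P$ over $T_P^0$. Combined with the already-established model completeness of $T_P$, this yields that $T_P$ is the model companion of $T^{alg}_P$.

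For the completion condition, fix $K_0 \models T^{alg}_P$ together with any two models $M_1, M_2 \models T_P$ extending $K_0$. Since $K_0 \models T^{alg}_P$, the field $K_0$ is algebraically closed, and it sits inside each $M_i$ as a $T_P^0$-substructure. Applying the preceding corollary with $f = \id_{K_0}$ shows that $\id_{K_0}$ is a partial elementary map $M_1 \to M_2$, so $M_1 \equiv_{K_0} M_2$. Since $M_1$ and $M_2$ were arbitrary, $T_P \cup \diag(K_0)$ is complete, which completes the proof.

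I do not anticipate any genuine obstacle: all the substantive content is already packaged into the amalgamation proposition for $T_P^0$ over algebraically closed bases and its corollary on partial elementary maps. What remains is little more than a bookkeeping step reconciling these with the standard characterization of model completion.
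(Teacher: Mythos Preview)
Your proposal is correct and matches the paper's intended argument: the corollary is stated without proof, as an immediate consequence of the amalgamation proposition over algebraically closed bases (equivalently, the preceding corollary on partial elementary maps) together with $T_P$ being the model companion of $T_P^0$. You have simply spelled out the standard verification that these ingredients yield a model completion.
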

\begin{corollary}
  Let $M, M'$ be two models of $T_P$.  Then $M \equiv M'$ if and only
  if $\Abs(M) \cong \Abs(M')$, where $\Abs(M)$ denotes the
  substructure of ``absolute numbers,'' i.e., elements algebraic over
  the prime field.
\end{corollary}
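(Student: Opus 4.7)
The plan is to handle the two directions separately. The backward direction is essentially immediate from the preceding corollary on partial elementary maps, while the forward direction calls for a back-and-forth argument that stays inside the absolute numbers.

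For the backward direction, I would simply note that $\Abs(M)$ is an algebraically closed subfield of $M$ (being the algebraic closure of the prime field inside $M$) and a $T_P^0$-substructure, and likewise for $\Abs(M')$. The preceding corollary then says that any isomorphism $f \colon \Abs(M) \to \Abs(M')$ of $T_P^0$-structures is a partial elementary map between $M$ and $M'$; evaluating $f$ at sentences with no parameters yields $M \equiv M'$.

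For the forward direction, I would first invoke the earlier corollary that field-theoretic and model-theoretic algebraic closure agree in $T_P$, so that $\Abs(M) = \acl^M(\emptyset)$ and $\Abs(M') = \acl^{M'}(\emptyset)$; in particular both are countable. Starting from the empty partial map (which is partial elementary precisely because $M \equiv M'$), I would run the standard back-and-forth, alternately extending a finite partial elementary map $g_n$ to cover the next element of $\Abs(M)$ in the domain and the next element of $\Abs(M')$ in the image. The one point to check is that for any $a \in \Abs(M)$, the type $\tp^M(a/\dom(g_n))$ is algebraic over $\emptyset$; hence its image under $g_n$ is consistent over $\Img(g_n)$ in $M'$ and any realization $b \in M'$ automatically lies in $\acl^{M'}(\emptyset) = \Abs(M')$. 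This keeps the whole construction inside $\Abs(M')$, and the union $g$ is a bijection $\Abs(M) \to \Abs(M')$ preserving all atomic $T_P$-formulas, hence a $T_P^0$-isomorphism.

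The only (very mild) obstacle is the observation that algebraic types over $\emptyset$ force their realizations into the absolute numbers, which is what makes the back-and-forth produce an isomorphism of $\Abs(M)$ with $\Abs(M')$ rather than with some larger substructure of $M'$; everything else reduces cleanly to the two previous corollaries.
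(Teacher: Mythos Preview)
Your proposal is correct. The paper states the corollary without proof, treating it as immediate from the preceding results; your argument is the standard way to spell out the details. The backward direction is exactly what the paper intends (apply the partial-elementary-map corollary to the algebraically closed substructure $\Abs(M)$), and for the forward direction the back-and-forth you describe is the routine proof of the general fact that $\acl(\emptyset)$ is determined up to isomorphism by the complete theory, here combined with the identification $\Abs(M)=\acl^M(\emptyset)$ from the earlier corollary.

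One could shorten the forward direction slightly by quoting that general fact directly, or by passing to a common saturated elementary extension, but your explicit back-and-forth is entirely fine and arguably clearer in context.
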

The only valuation ring on $\Ff_p^{alg}$ is the trivial one, because
$(\Ff_P^{alg})^\times$ is torsion.  Therefore,
\begin{corollary}\label{pos-char-complete}
  If $M, M' \models T_P$ and $\characteristic(M) = \characteristic(M')
  > 0$, then $M \equiv M'$.
\end{corollary}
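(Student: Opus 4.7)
The plan is to reduce immediately to the previous corollary (the Keisler--Shelah style criterion: $M \equiv M'$ iff $\Abs(M) \cong \Abs(M')$), and show that in positive characteristic the structure $\Abs(M)$ is determined up to isomorphism purely by the characteristic.

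First I would identify the underlying field of $\Abs(M)$. Since $M \models T_P$, the field $M$ is algebraically closed, so the set of elements algebraic over the prime field $\Ff_p$ is exactly $\Ff_p^{alg}$ (as a field). Thus the field reduct of $\Abs(M)$ is canonically $\Ff_p^{alg}$, independent of $M$.

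Next I would use the observation flagged just before the statement: because $(\Ff_p^{alg})^\times$ is a torsion group, every element of $\Ff_p^{alg}$ is a root of unity, hence a unit in every valuation ring, so the only valuation ring on $\Ff_p^{alg}$ is $\Ff_p^{alg}$ itself. Consequently, for every $p \in P$, the valuation ring $\Oo_p$ of $\Abs(M)$ (defined as $\Oo_p^M \cap \Abs(M)$) must equal $\Ff_p^{alg}$. The $T_P^0$-structure on $\Abs(M)$ is therefore forced: all predicates $\Oo_p$ are interpreted as the full field, and the tree map $p \mapsto \Oo_p$ is the constant map to the top of $\Val(\Ff_p^{alg})^{op}$, which is weakly order-preserving trivially.

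Finally I would conclude by comparison with $M'$. The same argument shows that $\Abs(M')$ is the identical $T_P^0$-structure on $\Ff_p^{alg}$. Therefore $\Abs(M) \cong \Abs(M')$, and the preceding corollary gives $M \equiv M'$. There is essentially no obstacle here; the content is entirely in the previous corollary and in the triviality of valuation theory on $\Ff_p^{alg}$.
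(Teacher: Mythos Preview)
Your proof is correct and follows exactly the paper's approach: reduce to the criterion $M \equiv M' \iff \Abs(M) \cong \Abs(M')$, then use the fact that $(\Ff_p^{alg})^\times$ is torsion to conclude that the only valuation ring on $\Ff_p^{alg}$ is the trivial one, forcing the $T_P^0$-structure on $\Abs(M)$ to be uniquely determined by the characteristic.
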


\begin{corollary} \label{almost-qe}
  Let $K$ be a model of $T_P^0$, let $\varphi(\vec{x})$ be a sentence
  in the language of $T_P$, and let $\vec{a}$ be a tuple from $K$.
  There is a finite normal extension $L/K$ such that for $M \models
  T_P$ extending $K$, whether or not $\varphi(\vec{a})$ holds in $M$
  is determined by the induced $T_P^0$ structure on (the copy of) $L$
  in $M$.
\end{corollary}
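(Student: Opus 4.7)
The plan is to combine the completeness of $T_P \cup \diag^{T_P^0}(K^{alg},\sigma)$ (from the preceding elementary-equivalence corollary) with model-theoretic compactness, packaged as continuity of a $\{0,1\}$-valued function on a profinite space.

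First, fix an algebraic closure $K^{alg}$ of $K$ (as a pure field), and let $X$ denote the set of $T_P^0$-structures on $K^{alg}$ extending the given one on $K$; equivalently, $X$ consists of tuples $(\widetilde{\Oo}_p)_{p \in P}$ of valuation rings on $K^{alg}$ with $\widetilde{\Oo}_p \cap K = \Oo_p^K$ and $p \le q \implies \widetilde{\Oo}_p \supseteq \widetilde{\Oo}_q$. For each $\sigma \in X$, the preceding corollary says that $T_P \cup \diag^{T_P^0}(K^{alg},\sigma)$ is complete, and so decides $\varphi(\vec a)$; write $f(\sigma) \in \{0,1\}$ for its truth value. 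Equip $X$ with the profinite topology it inherits as the inverse limit $\varprojlim_L X_L$, where $L$ ranges over finite normal extensions of $K$ inside $K^{alg}$ and $X_L$ is the finite set of $T_P^0$-structures on $L$ extending $K$. Finiteness of $X_L$ uses that a valuation on $K$ has only finitely many extensions to any finite extension.

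Next I claim $f$ is locally constant. Fix $\sigma \in X$ and suppose $f(\sigma) = 1$ (the case $f(\sigma) = 0$ is symmetric). Since $T_P \cup \diag^{T_P^0}(K^{alg},\sigma) \vdash \varphi(\vec a)$, by compactness finitely many sentences from this diagram suffice; they mention only finitely many elements of $K^{alg}$, all of which lie inside some finite normal extension $L_\sigma/K$. Then $f(\sigma') = 1$ for every $\sigma' \in X$ agreeing with $\sigma$ on $L_\sigma$, establishing local constancy.

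Finally, since $X$ is compact and $\{0,1\}$ is discrete, $f$ factors through $X_L$ for a single finite normal $L/K$ --- take the compositum of any finite subcover of $X$ by neighborhoods of the above form. This $L$ does the job: for any $M \models T_P$ extending $K$, the truth of $\varphi(\vec a)$ in $M$ is determined by the induced $T_P^0$-structure on the copy of $L$ inside $M$. The only minor subtlety is that this copy is determined only up to $\Gal(L/K)$, but this is harmless because $f$ is $\Gal(K^{alg}/K)$-invariant (since $\vec a \in K$). There is no real obstacle here: the argument is a standard Stone-duality/compactness package built directly on the previous corollary.
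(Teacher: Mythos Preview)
Your argument is correct and is exactly the standard way to unpack this corollary. The paper itself gives no proof here---it states the result as an immediate corollary of the preceding material (chiefly that $T_P$ is the model completion of $T_P^{alg}$, equivalently that an isomorphism of algebraically closed $T_P^0$-substructures is a partial elementary map)---and your profinite-space/compactness packaging is precisely the natural way to make that implication explicit. One small remark: the ``preceding elementary-equivalence corollary'' you invoke is really the partial-elementary-map corollary (isomorphic algebraically closed $T_P^0$-substructures give the same type), not the $\Abs(M)$ statement; you use the right fact, just cite it precisely.
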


\begin{corollary}
  The (incomplete) theory $T_P$ is decidable: there is an algorithm
  which takes a sentence $\varphi$ and determines whether $T_P \vdash
  \varphi$.
\end{corollary}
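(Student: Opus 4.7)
The plan is to establish decidability by showing that both $\{\varphi : T_P \vdash \varphi\}$ and its complement are recursively enumerable.  The first is immediate: the axioms of $T_P$ given in Section~\ref{cast-of-characters} form a schema depending computably on the finite tree $P$, so provability is r.e.\ by proof enumeration.

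For the complement, I would use Corollary~\ref{almost-qe} together with the elementary equivalence criterion to enumerate finite witnesses.  Enumerate all pairs $(L,\sigma)$, where $L$ is a finite normal extension of the prime field (either $\Qq$ or $\Ff_p$) and $\sigma$ is a $T_P^0$-structure on $L$; this is finite data and hence computably enumerable.  Given $(L,\sigma)$, apply Corollary~\ref{almost-qe} with $K$ the prime field equipped with the restriction of $\sigma$, so that $L$ is the finite normal extension named by that corollary: the truth of $\varphi$ in any $M \models T_P$ containing $L$ as a $T_P^0$-substructure is then determined by $\sigma$.  To compute the determined value, pick any completion of $T_P$ realizing $(L,\sigma)$; by the elementary equivalence criterion each completion is determined by the isomorphism type of $\Abs$, which is finite data above $\sigma$ (unique in positive characteristic by Corollary~\ref{pos-char-complete}, and in characteristic $0$ classified by the choice, for each $p \in P$, of a prime of $\Zz$ below $\Oo_p$, subject to the order constraints of $P$).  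Each such completion is complete and r.e., hence decidable, so we can algorithmically determine whether $\sigma$ forces $\varphi$ or $\neg\varphi$.  If we ever find $(L,\sigma)$ that forces $\neg\varphi$, we output $T_P \not\vdash \varphi$.  Conversely, if $T_P \not\vdash \varphi$, then $\neg\varphi$ holds in some $M \models T_P$; applying Corollary~\ref{almost-qe} inside $M$ to the prime field with its induced $T_P^0$-structure yields a finite normal $L$ over the prime field whose structure forces $\neg\varphi$, so our enumeration will eventually discover it.

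The main obstacle is verifying the effective classification of completions in characteristic zero, namely that isomorphism types of $T_P^0$-structures on $\Qq^{alg}$ are parameterized by a computable set of finite data (a residue-characteristic profile $P \to \{\mathrm{trivial}\} \cup \{\mathrm{primes\ of\ }\Zz\}$, modulo the Galois action and the compatibility dictated by the tree structure of $P$), and that this data gives an effective r.e.\ axiomatization of the corresponding completion built on top of $T_P$.  Once this is in place, the two r.e.\ enumerations above combine into the desired decision procedure.
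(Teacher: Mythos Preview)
Your high-level strategy---enumerate finite $T_P^0$-structures $(L,\sigma)$ over the prime field and check whether each forces $\neg\varphi$---is reasonable, but the step where you ``compute the determined value'' has a genuine gap. You assert that in characteristic~$0$ the completions of $T_P$ are classified by a residue-characteristic profile $P \to \{\text{trivial}\} \cup \{\text{primes}\}$, so that each completion is finite data and hence r.e. This is false. The completion is determined by the isomorphism type of the $T_P^0$-structure on $\Qq^{alg}$, which by Claim~\ref{middle-claim} amounts to a tuple $(\Oo_{a_1},\ldots,\Oo_{a_n})$ of $p_i$-adic valuations on $\Qq^{alg}$, taken up to the diagonal $\Gal(\Qq)$-action. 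This orbit space is \emph{not} determined by the $p_i$ alone. Already when two nodes share a residue characteristic~$p$, the pair $(\Oo_{a_1},\Oo_{a_2})$ can have $\Oo_{a_1} = \Oo_{a_2}$ or not; and even for a single $p$ and distinct valuations, the double-coset space $D_p \backslash \Gal(\Qq) / D_p$ has more than one element (e.g.\ restrict to the splitting field of $x^3 - 2$ and take $p=5$: the decomposition group is a transposition in $S_3$, giving two double cosets). In general there are uncountably many completions, so they cannot all be recursively axiomatized, and your plan to ``pick any completion and evaluate'' cannot be made effective.

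The paper sidesteps this by never attempting to enumerate completions. Instead, for each characteristic profile $\chi$ it constructs a c.e.\ set $\Psi$ of existential sentences of the shape $\exists x\,(Q(x)=0 \wedge R_1(x) \wedge \cdots \wedge R_n(x))$, where $Q$ is monic irreducible over $\Qq$ and each $R_i$ is a quantifier-free condition on the single valuation $\Oo_{a_i}$, required to be consistent with $\mathrm{ACVF}_{0,p_i}$. The point is that consistency of such a $\psi$ with $T_{P,\chi}$ reduces to consistency in each ACVF reduct separately (via Claim~\ref{middle-claim} and a Galois-conjugation argument), which is decidable since $\mathrm{ACVF}_{0,p_i}$ is complete; and conversely any $\varphi$ consistent with $T_{P,\chi}$ is provable from $T_{P,\chi}\cup\{\psi\}$ for some $\psi\in\Psi$, using Corollary~\ref{almost-qe} and compactness. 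This gives the c.e.\ enumeration of sentences consistent with $T_P$ without ever parameterizing the completions. If you want to repair your approach, the simplest fix is to drop the completion analysis entirely and instead dovetail, over all $(L,\sigma)$, a search for proofs of $\neg\varphi$ from $T_P \cup \diag(L,\sigma)$; correctness then follows from Corollary~\ref{almost-qe} and the fact that every model of $T_P^0$ embeds in a model of $T_P$.
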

\begin{proof}
  As $T_P$ is c.e., it suffices to show that the set of sentences
  consistent with $T_P$ is c.e.
  
  Let $\chi$ be a function from $P$ to $\{0,2,3,5,7,\ldots\}$
  satisfying the requirement that $\chi(x) = p \implies \chi(y) = p$
  for $x \le y \in P$ and $p \ne 0$.  For any such $\chi$, let
  $T_{P,\chi}$ be $T_P$ plus axioms asserting that
  $\characteristic(\res \Oo_x) = \chi(x)$ for all $x \in P$.  Define
  $T_{P,\chi}^0$ similarly.  Each $T_{P,\chi}$ is consistent, so it
  suffices to show that the set of sentences consistent with
  $T_{P,\chi}$ is c.e., uniformly in $\chi$.

  When $\chi(\bot) > 0$, the theory $T_{P,\chi}$ is complete by
  Corollary~\ref{pos-char-complete}, and therefore decidable.  So assume
  $\chi(\bot) = 0$.

  Let $a_1, \ldots, a_n$ enumerate the minimal $a \in P$ such that
  $\chi(a) > 0$, and let $p_i = \chi(a_i)$.
  \begin{claim}\label{middle-claim}
    Let $M_1, M_2$ be two models of $T_{P,\chi}^0$, algebraic over the
    prime field.  Let $f : M_1 \to M_2$ be an isomorphism of the
    underlying fields.  Then $f$ is an isomorphism of
    $T_{P,\chi}^0$-structures if and only if $f$ sends
    $\Oo^{M_1}_{a_i}$ to $\Oo^{M_2}_{a_i}$ for $i = 1, \ldots, n$.
  \end{claim}
  \begin{proof}
    Without loss of generality, $M_1$ and $M_2$ have the same
    underlying field $K$ and $f$ is the identity map $id_K : K \to K$.
    The ``only if'' direction is clear.  For the ``if'' direction,
    note that the non-trivial valuation rings on $K$ are pairwise
    incomparable and mixed characteristic, because $K$ is algebraic
    over the prime field.  Therefore, $\Oo_a$ must be trivial when
    $\chi(a) = 0$, and $\Oo_a$ must equal $\Oo_{a_i}$ when $a \ge
    a_i$.  So the $\Oo_{a_i}$ determine the other valuation rings.
  \end{proof}
  Let $\Psi$ be the set of
  sentences of the form
  \begin{equation*}
    \exists x : Q(x) = 0 \wedge R_1(x) \wedge \cdots \wedge R_n(x),
  \end{equation*}
  where $Q(X) \in \Qq[X]$ is a monic irreducible polynomial, $R_i(x)$
  is a quantifier-free predicate only involving the $a_i$th valuation
  ring, and $ACVF_{0,p_i} \vdash \exists x : Q(x) = 0 \wedge R_i(x)$.
  The set $\Psi$ is c.e., because the set of monic irreducible
  polynomials is c.e.
  \begin{claim}
    A sentence $\varphi$ is consistent with $T_{P,\chi}$ if and only
    if $T_{P,\chi} \cup \{\psi\} \vdash \varphi$ for some $\psi \in
    \Psi$.
  \end{claim}
  \begin{proof}
    For the ``if'' direction, we only need to show that the sentences
    \begin{equation*}
      \psi := \left(\exists x : Q(x) = 0 \wedge R_1(x) \wedge \cdots
      \wedge R_n(x)\right)
    \end{equation*}
    are consistent with $T_{P,\chi}$.  Fix a copy of $\Qq^{alg}$ and a
    root $\alpha$ of $Q(X)$.  For each $i$, we can find a valuation
    ring $\Oo_i$ on $\Qq^{alg}$ of mixed characteristic $(0,p_i)$,
    such that
    \begin{equation*}
      (\Qq^{alg},\Oo_i) \models R_i(\alpha).
    \end{equation*}
    Indeed, first choose an arbitrary valuation ring $\Oo'$ of mixed
    characteristic $(0,p_i)$, use the assumption on $R_i$ to find
    $\alpha' \in \Qq^{alg}$ such that
    \begin{equation*}
      (\Qq^{alg},\Oo') \models Q(\alpha') = 0 \wedge R_i(\alpha'),
    \end{equation*}
    and then move $\alpha'$ and $\Oo'$ to $\alpha$ and $\Oo_i$ by an
    automorphism in $\Gal(\Qq)$.  Now
    \begin{equation*}
      (\Qq^{alg},\Oo_1,\ldots,\Oo_n) \models \exists x : Q(x) = 0
      \wedge R_1(x) \wedge \cdots \wedge R_n(x),
    \end{equation*}
    witnessed by $\alpha$.  Expand $(\Qq^{alg},\Oo_1,\ldots,\Oo_n)$ to
    a model of $T_{P,\chi}^0$ as in the proof of
    Claim~\ref{middle-claim}, and then extend to a model $M \models
    T_{P,\chi}$.  Then $M \models \psi$.

    Conversely, suppose that $\phi$ holds in some model $M \models
    T_{P,\chi}$.  By Corollary~\ref{almost-qe}, there is a subfield $L
    \le M$ such that $[L : \Qq] < \infty$ and $T_{P,\chi} \cup
    \diag(L) \vdash \phi$.  Let $\alpha$ be a generator of $L =
    \Qq(\alpha)$.  Then
    \begin{equation*}
      T_{P,\chi} \cup \qftp(\alpha/\emptyset) \vdash \psi.
    \end{equation*}
    Let $\qftp_i(\alpha/\emptyset)$ be the quantifier-free type in the
    reduct $(M,\Oo_{a_i})$.  Then
    \begin{equation*}
      T_{P,\chi} \cup \bigcup_{i = 1}^n \qftp_i(\alpha/\emptyset)
      \vdash \qftp(\alpha/\emptyset)
    \end{equation*}
    essentially by Claim~\ref{middle-claim}.  By compactness and the
    lemma on constants, there are quantifier-free formulas $R_i(x) \in
    \qftp_i(\alpha/\emptyset)$ such that
    \begin{align*}
      T_{P,\chi} \cup \{\exists x : R_1(x) \wedge \cdots \wedge
      R_n(x)\} \vdash \varphi.
    \end{align*}
    Let $Q(X)$ be the minimal polynomial of $\alpha$ over $\Qq$ and let
    $\psi$ be the sentence
    \begin{equation*}
      \exists x : Q(x) = 0 \wedge R_1(x) \wedge \cdots \wedge R_n(x).
    \end{equation*}
    Then $T_{P,\chi} \cup \{\psi\} \vdash \varphi$ a fortiori.
    Moreover, for any $i$
    \begin{equation*}
      M \models \exists x : Q(x) = 0 \wedge R_i(x),
    \end{equation*}
    witnessed by $\alpha$.  So the formula $\exists x : Q(x) = 0
    \wedge R_i(x)$ is consistent with ACVF${}_{0,p_i}$.  But
    ACVF${}_{0,p_i}$ is complete, so ACVF${}_{0,p_i} \vdash \exists x
    : Q(x) = 0 \wedge R_i(x)$.  Therefore $\psi \in \Psi$.
  \end{proof}
  Given the claim, it follows that the set of sentences consistent
  with $T_{P,\chi}$ is c.e., uniformly in $\chi$.  Taking the union
  over all $\chi$, the set of sentences consistent with $T_P$ is c.e.
  The set of consequences of $T_P$ is trivially c.e., and so the
  theory is decidable.
\end{proof}

Using Example~\ref{acvnf-tp}, we deduce
\begin{corollary}
  Let $ACv^nF$ be the theory of algebraically closed fields with $n$
  valuation rings (as unary predicates).  Then the incomplete theory
  $ACv^nF$ is decidable.
\end{corollary}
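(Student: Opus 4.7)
The plan is to reduce the decision problem for $ACv^nF$ to finitely many instances of the (already decidable) theories $T_P$, using the bi-interpretation in Example~\ref{acvnf-tp}.

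First I would observe that for each finite tree $P$ and each map $f : \{1,\ldots,n\} \to P$ whose image together with $\bot$ generates $P$ as a $\wedge$-semilattice, any $T_P$-model $(K,\Oo_p : p \in P)$ yields an $ACv^nF$-model by setting $\Oo_i := \Oo_{f(i)}$, and conversely, Example~\ref{acvnf-tp} shows that every $ACv^nF$-model $(K,\Oo_1,\ldots,\Oo_n)$ arises this way from a unique such pair $(P,f)$, namely $P$ is the sub-$\wedge$-semilattice of $\Val(K)^{op}$ generated by $\{\Oo_1,\ldots,\Oo_n,K\}$ and $f(i) = \Oo_i$. Since the $\wedge$-semilattice generated by $n$ elements has size bounded by a function of $n$, there are only finitely many isomorphism classes of such pairs $(P,f)$; call this finite set $\mathcal{P}_n$, which is effectively enumerable.

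Next, given a sentence $\varphi$ in the language of $ACv^nF$, and any $(P,f) \in \mathcal{P}_n$, define the translation $\varphi^{(P,f)}$ in the language of $T_P$ by replacing every occurrence of the predicate $\Oo_i$ with $\Oo_{f(i)}$. The bi-interpretation says that $\varphi$ holds in the $ACv^nF$-reduct of a $T_P$-model (with respect to $f$) iff $\varphi^{(P,f)}$ holds in the $T_P$-model. Hence
\begin{equation*}
  ACv^nF \vdash \varphi \iff T_P \vdash \varphi^{(P,f)} \text{ for every } (P,f) \in \mathcal{P}_n.
\end{equation*}

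Finally, I would invoke the previous corollary: each $T_P$ is decidable, so the right-hand side is a finite conjunction of decidable conditions, and is therefore decidable. The only (very mild) thing to check is that the enumeration of $\mathcal{P}_n$ and the translation $\varphi \mapsto \varphi^{(P,f)}$ are effective, which is routine since $P$ is a finite combinatorial object. There is no serious obstacle here; the whole content of the corollary is packaged in Example~\ref{acvnf-tp} and the decidability of $T_P$.
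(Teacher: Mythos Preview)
Your proposal is correct and follows essentially the same approach as the paper: reduce to the finitely many theories $T_P$ via Example~\ref{acvnf-tp}, using that only boundedly many trees $P$ can occur. The paper's proof is more terse, merely noting the bound $|P| \le n^2+1$ (since in a tree every element of the generated $\wedge$-semilattice is a pairwise meet $\Oo_i \cdot \Oo_j$ or $K$), whereas you spell out the translation $\varphi \mapsto \varphi^{(P,f)}$ and the equivalence $ACv^nF \vdash \varphi \iff \forall (P,f)\ T_P \vdash \varphi^{(P,f)}$ explicitly.
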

\begin{proof}
  The only thing to check here is that we can bound the size of $P$
  from the number $n$ of given valuations $\Oo_1, \ldots, \Oo_n$.  On
  account of the tree structure, every valuation in $P$ is of the form
  $\Oo_i \cdot \Oo_j$ (or $K$), so there are certainly no more than
  $n^2 + 1$ elements in $P$.
\end{proof}

\section{Normal and relatively closed extensions} \label{cubes-galore}
\begin{definition}
  Fix a diagram
  \begin{equation*}
    \xymatrix{ F \ar[r] \ar[d] & K_1 \\ K_2 & }
  \end{equation*}
  in the category of fields.  A reduced amalgamation
  \begin{equation*}
    \xymatrix{ F \ar[r] \ar[d] & K_1 \ar[d] \\ K_2 \ar[r] & L }
  \end{equation*}
  is \emph{cozy} if the maps $K_i \to L$ are isomorphisms.  An
  amalgamation type is \emph{cozy} if a representative reduced
  amalgamation is cozy.
\end{definition}
\begin{remark}\label{cozy-remark}
  The following are equivalent:
  \begin{itemize}
  \item Every amalgamation type of $K_1$ and $K_2$ over $F$ is cozy.
  \item $K_1$ and $K_2$ are (algebraic) normal extensions of $F$,
    isomorphic to each other over $F$.
  \end{itemize}
\end{remark}
\begin{lemma}\label{normal-exts}
  Let $L/K$ be a normal (algebraic) extension, and $\Oo$ be a
  valuation ring on $K$.
  \begin{enumerate}
  \item $\Aut(L/K)$ acts transitively on the set of extensions of
    $\Oo$ to $L$.
  \item If $\Oo'$ is any extension of $\Oo$ to $L$, then the residue
    field extension is a normal extension.
  \item The residue field extension does not depend on $\Oo'$ in the
    following sense: if $\Oo'$ and $\Oo''$ are two extensions of $\Oo$
    to $L$, then $\res \Oo'$ and $\res \Oo''$ are isomorphic over
    $\res \Oo$.
  \end{enumerate}
\end{lemma}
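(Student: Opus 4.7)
The plan is to handle the three claims in the order they are stated, using the integral-closure description of extensions of $\Oo$ to $L$ together with classical Galois theory.

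For part (1), I would first reduce to the case of a finite normal extension $L/K$ by writing $L$ as the directed union of its finite normal subextensions $L_\alpha/K$ and using a compactness/inverse-limit argument: the set $X_\alpha$ of extensions of $\Oo$ to $L_\alpha$ is finite and nonempty, and the transition maps $X_\beta \to X_\alpha$ (for $L_\beta \supseteq L_\alpha$) are surjective by the Chevalley Extension Theorem; so any extension of $\Oo$ to $L$ is a thread in the inverse limit, and transitivity over $L$ follows from transitivity over each $L_\alpha$. In the finite case, let $B$ be the integral closure of $\Oo$ in $L$. Every valuation ring of $L$ dominating $\Oo$ is of the form $B_{\mm}$ for some maximal ideal $\mm$ of $B$, and vice versa. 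So it suffices to observe the standard fact that $\Aut(L/K)$ acts transitively on the maximal ideals of $B$ lying over the maximal ideal of $\Oo$.

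For part (2), I would take a residue $\bar{\alpha} \in \res\Oo'$ and lift it to some $\alpha \in \Oo'$; since $\alpha$ is algebraic over $K$ and integral over $\Oo$, its minimal polynomial $f(X) \in K[X]$ already has coefficients in $\Oo$. By normality of $L/K$, $f$ splits in $L$, and every root is integral over $\Oo$, hence lies in $\Oo'$. Reducing mod $\mm'$ shows that $\bar f \in \res\Oo[X]$ splits in $\res\Oo'$. Since the minimal polynomial of $\bar\alpha$ over $\res\Oo$ divides $\bar f$, it also splits in $\res\Oo'$, proving normality of the residue extension.

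Part (3) is then immediate from part (1): any $\sigma \in \Aut(L/K)$ sending $\Oo'$ to $\Oo''$ restricts to the identity on $K$ (hence on $\Oo$ and $\res\Oo$) and induces an isomorphism $\res\Oo' \cong \res\Oo''$ over $\res\Oo$. The only real obstacle is the infinite-extension reduction in part (1); everything else is essentially mechanical once one identifies extensions of $\Oo$ with maximal ideals of the integral closure.
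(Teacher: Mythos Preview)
Your approach is essentially correct and entirely classical, but it differs substantially from the paper's proof. One small gap: in part (2) you assert that an arbitrary lift $\alpha \in \Oo'$ of $\bar\alpha$ is integral over $\Oo$. This is false in general when $\Oo$ has more than one extension to $L$ (e.g.\ with $K=\Qq$, $\Oo=\Zz_{(5)}$, $L=\Qq(i)$, the element $(2+i)/5 \in \Oo'$ for one of the two extensions $\Oo'$, but its minimal polynomial is $X^2 - \tfrac{4}{5}X + \tfrac{1}{5}$). The fix is immediate: choose $\alpha$ in the integral closure $B$ of $\Oo$ in $L$, which is possible because $B$ maps onto $\res\Oo'$ (in the finite case $\mm' \cap B$ is maximal in $B$, so $B/(\mm'\cap B)$ is already the residue field). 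With that adjustment your argument goes through.

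The paper takes a quite different route: rather than integral closures and explicit polynomial splitting, it proves all three parts simultaneously via its controlled amalgamation machinery. Given two extensions $\Oo_1,\Oo_2$ and any amalgamation type $\tau$ of $\res\Oo_1$ and $\res\Oo_2$ over $\res\Oo$, Lemma~\ref{amalgamation-master} produces a reduced valued-field amalgam $(L',\Oo')$ of $(L,\Oo_1)$ and $(L,\Oo_2)$ over $(K,\Oo)$ whose residue field realizes $\tau$. Normality of $L/K$ forces $L'$ to be \emph{cozy} (both maps $L\to L'$ are isomorphisms), which immediately yields the automorphism for part (1), and forces the residue amalgam to be cozy as well, which by Remark~\ref{cozy-remark} gives parts (2) and (3) at once. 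This argument is uniform in the degree of $L/K$ (no finite/infinite reduction needed) and showcases the amalgamation lemma that the paper has just developed; your argument, by contrast, is more elementary and self-contained, relying only on standard valuation-theoretic facts rather than the paper's bespoke tools.
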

\begin{proof}
  Let $\Oo_1$ and $\Oo_2$ be two (not necessarily distinct) extensions
  of $\Oo$ to $L$.  By Remark~\ref{cozy-remark} it suffices to show
  that $\Oo_1$ and $\Oo_2$ are in the same orbit of $\Aut(L/K)$ and
  that every amalgamation type of $\res \Oo_1$ and $\res \Oo_2$ over
  $\res \Oo$ is cozy.  Given any amalgamation type $\tau$, by
  Lemma~\ref{amalgamation-master} there is an amalgamation of valued
  fields
  \begin{equation*}
    \xymatrix{ (K,\Oo) \ar[r] \ar[d] & (L,\Oo_1) \ar[d] \\ (L,\Oo_2)
      \ar[r] & (L',\Oo')}
  \end{equation*}
  such that
  \begin{itemize}
  \item $L'$ is a reduced amalgamation of $L$ and $L$ over $K$
  \item $\res \Oo'$ is an amalgamation of $\res \Oo_1$ and $\res
    \Oo_2$ over $\res \Oo$, \emph{of type $\tau$}.
  \end{itemize}
  Then $L'$ is a cozy amalgamation of $L$ and $L$ over $K$, by
  normality of $L/K$, Remark~\ref{cozy-remark}, and the fact that $L'$
  is a reduced amalgamation.  If $\sigma$ is the induced isomorphism
  $L \stackrel{\sim}{\to} L' \stackrel{\sim}{\to} L$, then $\sigma \in
  \Aut(L/K)$ and $\sigma(\Oo_1) = \Oo_2$, proving transitivity.
  Moreover, the fact that $L'$ is a cozy amalgamation of $L$ and $L$
  over $K$ implies the same thing for the residue fields: $\res \Oo'$
  is a (reduced) cozy amalgamation of $\res \Oo_1$ and $\res \Oo_2$
  over $\res \Oo$.  Therefore $\tau$ is cozy, completing the proof.
\end{proof}
\begin{definition}
  If $L/K$ is a finite normal extension and $\Oo$ is a valuation ring
  on $K$, we let $n_{\Oo,L/K}$ denote the (finite) number of
  extensions of $\Oo$ to $L$.
\end{definition}
Note that $n_{\Oo,L/K}$ depends only on the isomorphism type of $L$
over $K$: if $L'/K$ is an isomorphic extension, then $n_{\Oo,L'/K} =
n_{\Oo,L/K}$.

Recall that if $L/K$ is a finite (algebraic) extension of valued
fields, then the residue field extension is also finite, of degree no
greater than $[L : K]$, because one can lift a basis of $\res L$ over
$\res K$ to a $K$-linearly independent set in $L$.
\begin{lemma}\label{central-diamond}
  Let
  \begin{equation*}
    \xymatrix{ L_1 \ar[r] & L_2 \\ K_1 \ar[u] \ar[r] & K_2 \ar[u]}
  \end{equation*}
  be a diagram of fields, in which $L_2/L_1$ and $K_2/K_1$ are finite
  normal extensions.  Suppose $K_1$ is relatively algebraically closed
  in $L_1$.  Let $\Oo_{L_1}$, $\Oo_{K_1}$, and $\Oo_{K_2}$ be
  valuation rings on $L_1$, $K_1$, and $K_2$, respectively.  Suppose
  $\Oo_{K_1}$ is the restriction of $\Oo_{L_1}$ and $\Oo_{K_2}$.
  Then, the set of valuation rings $\Oo_{L_2}$ on $L_2$ extending both
  $\Oo_{L_1}$ and $\Oo_{K_2}$ is non-empty, and has size exactly
  \begin{equation*}
    \frac{n_{\Oo_{L_1},L_2/L_1}}{n_{\Oo_{K_1},K_2/K_1}}
  \end{equation*}
\end{lemma}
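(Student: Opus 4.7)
The approach is to study the restriction map $\rho\colon A \to B$, $\Oo_{L_2} \mapsto \Oo_{L_2} \cap K_2$, where $A$ is the set of extensions of $\Oo_{L_1}$ to $L_2$ and $B$ is the set of extensions of $\Oo_{K_1}$ to $K_2$. Then $|A| = n_{\Oo_{L_1}, L_2/L_1}$, $|B| = n_{\Oo_{K_1}, K_2/K_1}$, and the fiber $\rho^{-1}(\Oo_{K_2})$ is precisely the set whose size is asserted. So it suffices to show $\rho$ is surjective (which subsumes non-emptiness) and has uniformly sized fibers; these together yield $|\rho^{-1}(\Oo_{K_2})| = |A|/|B|$ as claimed.

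The auxiliary fact driving both parts is that $L_1\otimes_{K_1}K_2$ is a field. Indeed, since $K_1$ is relatively algebraically closed in $L_1$ and $K_2/K_1$ is algebraic, the minimal polynomial over $K_1$ of every element of $K_2$ remains irreducible over $L_1$ (separable and purely inseparable degrees treated separately, using that any root of $X^{p^k}-c$ in $L_1$ would be algebraic over $K_1$, hence already in $K_1$). Consequently the compositum $L_1\cdot K_2 \subseteq L_2$ is canonically identified over $L_1$ and $K_2$ with $L_1\otimes_{K_1}K_2$.

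For surjectivity, fix $\Oo_{K_2}\in B$ and apply Lemma~\ref{amalgamation-master} to amalgamate the valued fields $(L_1,\Oo_{L_1})$ and $(K_2,\Oo_{K_2})$ over $(K_1,\Oo_{K_1})$, taking the output reduced (the residue-field amalgamation type is irrelevant). By the previous paragraph, the underlying field of the amalgamation is canonically identified with $L_1\cdot K_2\subseteq L_2$ over $L_1,K_2$; transporting the valuation along this identification gives a valuation on $L_1\cdot K_2$ extending both $\Oo_{L_1}$ and $\Oo_{K_2}$, and the Chevalley Extension Theorem extends it to some $\Oo_{L_2}\in A\cap\rho^{-1}(\Oo_{K_2})$. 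For fiber uniformity, given $\Oo_{K_2},\Oo_{K_2}'\in B$, take $\sigma\in\Aut(K_2/K_1)$ with $\sigma(\Oo_{K_2})=\Oo_{K_2}'$ using Lemma~\ref{normal-exts}. Extending $\sigma$ by $\id_{L_1}$ on the tensor product gives an automorphism $\tilde\sigma$ of $L_1\cdot K_2$ over $L_1$, and normality of $L_2/L_1$ lets us extend $\tilde\sigma$ further to some $\hat\sigma\in\Aut(L_2/L_1)$ (embed $L_1\cdot K_2\to L_2$ via $\tilde\sigma$, extend to an embedding $L_2\to L_2$ over $L_1$, and invoke normality for surjectivity). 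Then $\hat\sigma$ permutes $A$ and restricts to $\sigma$ on $K_2$, giving a bijection $\rho^{-1}(\Oo_{K_2})\to\rho^{-1}(\Oo_{K_2}')$.

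The main obstacle is the surjectivity step: Lemma~\ref{amalgamation-master} constructs a valued-field amalgamation abstractly, in no a priori specified ambient field, and the work is to pin this amalgamation down as a concrete sub-valuation inside $L_2$. The relative algebraic closedness hypothesis is exactly what makes $L_1\otimes_{K_1}K_2$ a field, ensuring the compositum $L_1\cdot K_2$ is unique up to canonical isomorphism over $L_1,K_2$ and supplying the identification needed to carry the abstract amalgamation into $L_2$.
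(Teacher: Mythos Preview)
Your overall strategy is sound, but there is a genuine gap: the claim that $L_1\otimes_{K_1}K_2$ is a field is false in positive characteristic. The argument ``the minimal polynomial of every element of $K_2$ stays irreducible over $L_1$'' is correct, but it does not imply that the tensor product is a field when $K_2/K_1$ is not simple. A concrete counterexample: let $K_1=\Ff_p(s,t)$, let $K_2=K_1(s^{1/p},t^{1/p})$ (purely inseparable, hence normal), and let $L_1$ be the function field of the curve $Y^p - tX^p - s = 0$ over $K_1$, say $L_1=K_1(\alpha,\beta)$ with $\beta^p=s+\alpha^p t$. Over $\overline{K_1}$ this curve is the $p$-fold line $(Y - t^{1/p}X - s^{1/p})^p=0$, so it is geometrically irreducible and $K_1$ is relatively algebraically closed in $L_1$. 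But in $L_1\otimes_{K_1}K_2 = L_1[U,V]/(U^p-s,V^p-t)$ the element $\beta-U-\alpha V$ is nonzero and satisfies $(\beta-U-\alpha V)^p = \beta^p - s - \alpha^p t = 0$, so the tensor product has nilpotents.

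The fix is not hard: what your argument actually needs is that $\Spec(L_1\otimes_{K_1}K_2)$ is a single point, i.e.\ that there is a \emph{unique} reduced amalgamation, namely $L_1\cdot K_2\subseteq L_2$. This does follow from relative algebraic closedness: writing $K_1\subseteq K_2^{\mathrm{sep}}\subseteq K_2$, the separable part $L_1\otimes_{K_1}K_2^{\mathrm{sep}}$ is a field by your irreducibility argument and the primitive element theorem, and tensoring further with the purely inseparable $K_2/K_2^{\mathrm{sep}}$ can only introduce nilpotents, not new primes. With this correction both your surjectivity step (the reduced amalgamation from Lemma~\ref{amalgamation-master} is forced to be $L_1\cdot K_2$) and your fiber-uniformity step (lift $\sigma$ first to $L_1K_2^{\mathrm{sep}}$ by linear disjointness, then uniquely through the purely inseparable part) go through.

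For comparison, the paper takes a shorter route: it proves directly that the restriction $\Aut(L_2/L_1)\to\Aut(K_2/K_1)$ is surjective (via the model-theoretic Galois correspondence in ACF and relative algebraic closedness), and then observes that $V_L\to V_K$ is a map of transitive $\Aut(L_2/L_1)$-sets, whence all fibers have size $|V_L|/|V_K|$ and surjectivity is automatic. This avoids the amalgamation machinery entirely; note also that your fiber-uniformity step is itself essentially a proof of the surjectivity of $\Aut(L_2/L_1)\to\Aut(K_2/K_1)$, so your separate surjectivity argument via Lemma~\ref{amalgamation-master} is redundant once that is in hand.
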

\begin{proof}
  We claim that the restriction map $\rho : \Aut(L_2/L_1) \to
  \Aut(K_2/K_1)$ is surjective.\footnote{Here, $\Aut(-/-)$ denotes
    automorphisms of pure fields, not valued fields.}  Assume
  otherwise, and embed $L_2$ into a monster model $\Mm$ of ACF.  By
  elimination of imaginaries and the model-theoretic Galois
  correspondence, non-surjectivity implies there is $x \in \dcl(K_2)
  \cap \dcl(L_1) \setminus \dcl(K_1).$ Definable closure in ACF
  corresponds to perfect closure.  Thus, after replacing $x$ with
  $x^{p^k}$, we may assume $x \in K_2 \cap L_1$.  Then $K_2 \cap L_1
  \setminus K_1$ is non-empty, contradicting relative algebraic
  closure of $K_1$ in $L_1$.

  Thus $\rho : \Aut(L_2/L_1) \to \Aut(K_2/K_1)$ is surjective.  Let
  $V_L$ be the set of valuation rings on $L_2$ extending $\Oo_{L_1}$
  and $V_K$ be the set of valuation rings on $K_2$ extending
  $\Oo_{K_1}$.  Both these sets are finite.  By
  Lemma~\ref{normal-exts}, $\Aut(L_2/L_1)$ acts transitively on $V_L$
  and $\Aut(K_2/K_1)$ acts transitively on $V_K$.  The restriction map
  $V_L \to V_K$ is compatible with the action, in the sense that if
  $\Oo \in V_L$ and $\sigma \in \Aut(L_2/L_1)$, then
  \begin{equation*}
    (\sigma\cdot \Oo)) \cap K_2 = (\sigma | K_2) \cdot (\Oo \cap K_2).
  \end{equation*}
  In particular, if we view $V_K$ as an $\Aut(L_2/L_1)$-set via the
  homomorphism $\rho : \Aut(L_2/L_1) \to \Aut(K_2/K_1)$, then the
  restriction $V_L \to V_K$ is a homomorphism of $\Aut(L_2/L_1)$-sets.
  Then $V_K$ is a transitive $\Aut(L_2/L_1)$ by surjectivity of
  $\rho$.  Because both $V_L$ and $V_K$ are transitive
  $\Aut(L_2/L_1)$-sets, every fiber of the map $V_L \to V_K$ has the
  same cardinality.  This cardinality must be
  \begin{equation*}
    \frac{|V_L|}{|V_K|} =:
    \frac{n_{\Oo_{L_1},L_2/L_1}}{n_{\Oo_{K_1},K_2/K_1}}
  \end{equation*}
\end{proof}

\begin{lemma}\label{smooth-1}
  Let $L/K$ be a finite normal extension.  Let $\Oo_K \supseteq
  \Oo'_K$ be two valuation rings on $K$.  There is an integer
  $n_{\Oo_K,\Oo'_K,L/K}$ such that for any $\Oo_L$ on $L$ extending
  $\Oo_K$, the set $\mathcal{S}$ of $\Oo'_L \in \Val(L|\Oo_L)$
  extending $\Oo'_K$ has size exactly $n_{\Oo_K,\Oo'_K,L/K}$.

  Furthermore, for any $\Oo_L$, we have
  \begin{equation*}
    n_{\Oo_K,\Oo'_K,L/K} = n_{\Oo'_K \div \Oo_K,\res \Oo_L / \res
      \Oo_K}.
  \end{equation*}
\end{lemma}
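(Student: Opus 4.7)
The plan is to use Remark~\ref{meteor}.\ref{orchid} to translate the counting problem from $\Val(L|\Oo_L)$ into $\Val(\res \Oo_L)$, and then invoke Lemma~\ref{normal-exts} to ensure that the answer on the residue side is independent of the choice of $\Oo_L$. To start, I would fix an arbitrary extension $\Oo_L$ of $\Oo_K$ to $L$ and consider the residue field extension $\res \Oo_L / \res \Oo_K$. This is finite (since $L/K$ is finite) and by Lemma~\ref{normal-exts}(2) it is normal. By Lemma~\ref{normal-exts}(3), its isomorphism type over $\res \Oo_K$ does not depend on the choice of $\Oo_L$. Because $n_{\Oo,\,M/\res \Oo_K}$ depends only on the isomorphism type of the normal extension $M$ over $\res \Oo_K$, the cardinal $n_{\Oo'_K \div \Oo_K,\,\res \Oo_L/\res \Oo_K}$ is a function of $\Oo_K$, $\Oo'_K$, and $L/K$ alone. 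I would adopt this cardinal as the definition of $n_{\Oo_K,\Oo'_K,L/K}$, which automatically yields the ``furthermore'' identity in the statement.

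Next I would apply Remark~\ref{meteor}.\ref{orchid} to the extension $L/K$ with valuation rings $\Oo_L$ and $\Oo_K$, obtaining a commutative square
\begin{equation*}
  \xymatrix{\Val(L|\Oo_L) \ar[r] \ar[d] & \Val(K|\Oo_K) \ar[d] \\ \Val(\res \Oo_L) \ar[r] & \Val(\res \Oo_K)}
\end{equation*}
whose vertical arrows are the bijections of Remark~\ref{meteor}.\ref{petunia}. The set $\mathcal{S}$ is the fiber of the top arrow over $\Oo'_K$; by commutativity of the square, the vertical bijections restrict to a bijection between $\mathcal{S}$ and the fiber of the bottom arrow over $\Oo'_K \div \Oo_K$. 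The latter is by definition the set of extensions of $\Oo'_K \div \Oo_K$ to the normal extension $\res \Oo_L/\res \Oo_K$, which has cardinality $n_{\Oo'_K \div \Oo_K,\,\res \Oo_L/\res \Oo_K} = n_{\Oo_K,\Oo'_K,L/K}$, as required.

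The argument is essentially bookkeeping: Lemma~\ref{normal-exts} supplies normality of the residue extension and independence of $\Oo_L$ up to isomorphism, while Remark~\ref{meteor}.\ref{orchid} transfers the count to the residue side where Lemma~\ref{normal-exts} can be applied. I do not foresee any substantive obstacle; the only delicate point is checking that the isomorphism type of $\res \Oo_L/\res \Oo_K$, and hence the extension count into it, is genuinely independent of $\Oo_L$, which is exactly what Lemma~\ref{normal-exts}(3) delivers.
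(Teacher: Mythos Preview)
Your proposal is correct and follows essentially the same approach as the paper: define $n_{\Oo_K,\Oo'_K,L/K}$ as $n_{\Oo'_K \div \Oo_K,\res \Oo_L/\res \Oo_K}$, use Lemma~\ref{normal-exts} to see this is independent of $\Oo_L$, and then use the commutative square of Remark~\ref{meteor}.\ref{orchid} to identify $\mathcal{S}$ with the set of extensions of $\Oo'_K \div \Oo_K$ to $\res \Oo_L$. If anything, your version is slightly more careful in explicitly noting that $\res \Oo_L/\res \Oo_K$ is normal (Lemma~\ref{normal-exts}(2)), which is needed for the symbol $n_{\Oo'_K \div \Oo_K,\res \Oo_L/\res \Oo_K}$ to make sense.
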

\begin{proof}
  Let $k$ be the residue field of $\Oo_K$.  Given $\Oo_L$, let $\ell$
  be $\res \Oo_L$.  By Lemma~\ref{normal-exts}, the isomorphism type
  of $\ell$ over $k$ does not depend on $\Oo_L$.  Let
  $n_{\Oo_K,\Oo'_K,L/K}$ be $n_{\Oo'_K \div \Oo_K,\ell/k}$; this
  depends only on the isomorphism type of $\ell$ over $k$, hence is
  independent of $\Oo_L$.

  By Remark~\ref{meteor}.\ref{orchid} there is a diagram
  \begin{equation*}
    \xymatrix{ \Val(L|\Oo_L) \ar[r]^\sim \ar[d] &  \Val(\ell) \ar[d]
      \\ \Val(K|\Oo_K) \ar[r]_\sim & \Val(k)}
  \end{equation*}
  with horizontal maps the isomorphisms
  \begin{align*}
    \Oo & \mapsto \Oo \div \Oo_L \\
    \Oo & \mapsto \Oo \div \Oo_K
  \end{align*}
  respectively.

  The set $\mathcal{S}$ is the fiber of the left vertical map over
  $\Oo'_K$.  Via the horizontal isomorphisms, this is in bijection
  with the set of valuations on $\ell$ extending $\Oo'_K \div \Oo_K$.
  By definition, this set has size $n_{\Oo'_K \div \Oo_K, \ell / k}$,
  the value we chose for $n_{\Oo_K,\Oo'_K,L/K}$.
\end{proof}

\begin{lemma}\label{central-cube}
  Let
  \begin{equation*}
    \xymatrix{ L_1 \ar[r] & L_2 \\ K_1 \ar[u] \ar[r] & K_2 \ar[u]}
  \end{equation*}
  be a diagram of fields, in which $L_2/L_1$ and $K_2/K_1$ are finite
  normal extensions.  Let $\Oo_{L_2}$ be a valuation ring on $L_2$ and
  let $\Oo_{L_1}, \Oo_{K_2}, \Oo_{K_1}$ be the restrictions to $L_1,
  K_2$, and $K_1$, respectively.  Suppose that $\res \Oo_{K_1}$ is
  relatively algebraically closed in $\res \Oo_{L_1}$.  Let
  $\Oo'_{L_1}$, $\Oo'_{K_1}$, and $\Oo'_{K_2}$ be valuation rings on
  $L_1$, $K_1$, and $K_2$, respectively, such that
  \begin{itemize}
  \item $\Oo'_{L_1} \subseteq \Oo_{L_1}$, $\Oo'_{K_1} \subseteq
    \Oo_{K_1}$, and $\Oo'_{K_2} \subseteq \Oo_{K_2}$.
  \item $\Oo'_{K_1}$ is the restriction of both $\Oo'_{L_1}$ and
    $\Oo'_{K_2}$ to $K_1$.
  \end{itemize}
  Let $\mathcal{S}$ be the set of valuation rings $\Oo'_{L_2}$ on
  $L_2$ such that
  \begin{itemize}
  \item $\Oo'_{L_2} \subseteq \Oo_{L_2}$.
  \item $\Oo'_{L_2}$ extends both $\Oo'_{L_1}$ and $\Oo'_{K_2}$.
  \end{itemize}
  Then $\mathcal{S}$ is non-empty and has cardinality exactly
  \begin{equation*}
    \frac{n_{\Oo_{L_1},\Oo'_{L_1},L_2/L_1}}{n_{\Oo_{K_1},\Oo'_{K_1},K_2/K_1}}
  \end{equation*}
  where the $n$ are as in Lemma~\ref{smooth-1}.
\end{lemma}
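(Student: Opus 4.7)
The plan is to reduce Lemma~\ref{central-cube} to Lemma~\ref{central-diamond} applied to the diagram of residue fields, using the correspondence between valuation rings contained in a given $\Oo$ and valuation rings on $\res \Oo$ (Remark~\ref{meteor}).

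First I would introduce the residue-field incarnations of the small valuation rings. Set
\begin{equation*}
  \bar{\Oo}_{L_1} := \Oo'_{L_1} \div \Oo_{L_1}, \quad \bar{\Oo}_{K_1} := \Oo'_{K_1} \div \Oo_{K_1}, \quad \bar{\Oo}_{K_2} := \Oo'_{K_2} \div \Oo_{K_2},
\end{equation*}
valuation rings on $\res \Oo_{L_1}$, $\res \Oo_{K_1}$, $\res \Oo_{K_2}$ respectively. By Remark~\ref{meteor}.\ref{orchid}, the bijection $\Oo \mapsto \Oo \div (-)$ is compatible with restriction of valuations; so the hypothesis that $\Oo'_{K_1}$ is the restriction of both $\Oo'_{L_1}$ and $\Oo'_{K_2}$ translates exactly into the statement that $\bar{\Oo}_{K_1}$ is the restriction of both $\bar{\Oo}_{L_1}$ and $\bar{\Oo}_{K_2}$ along the residue field embeddings $\res \Oo_{K_1} \hookrightarrow \res \Oo_{L_1}$ and $\res \Oo_{K_1} \hookrightarrow \res \Oo_{K_2}$.

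Next I would verify that the diagram
\begin{equation*}
  \xymatrix{ \res \Oo_{L_1} \ar[r] & \res \Oo_{L_2} \\ \res \Oo_{K_1} \ar[u] \ar[r] & \res \Oo_{K_2} \ar[u]}
\end{equation*}
satisfies the hypotheses of Lemma~\ref{central-diamond}: the two vertical extensions are finite and normal by Lemma~\ref{normal-exts}, and relative algebraic closedness of $\res \Oo_{K_1}$ in $\res \Oo_{L_1}$ is our standing hypothesis. Applying Remark~\ref{meteor}.\ref{orchid} once more, the bijection $\Val(L_2|\Oo_{L_2}) \to \Val(\res \Oo_{L_2})$ sends $\mathcal{S}$ to the set $\bar{\mathcal{S}}$ of valuation rings $\bar{\Oo}_{L_2}$ on $\res \Oo_{L_2}$ extending both $\bar{\Oo}_{L_1}$ and $\bar{\Oo}_{K_2}$. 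By Lemma~\ref{central-diamond}, $\bar{\mathcal{S}}$ is non-empty of size
\begin{equation*}
  \frac{n_{\bar{\Oo}_{L_1},\,\res \Oo_{L_2}/\res \Oo_{L_1}}}{n_{\bar{\Oo}_{K_1},\,\res \Oo_{K_2}/\res \Oo_{K_1}}}.
\end{equation*}

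Finally, I would unwind the numerator and denominator using Lemma~\ref{smooth-1}, which gives
\begin{equation*}
  n_{\Oo_{L_1},\Oo'_{L_1},L_2/L_1} = n_{\bar{\Oo}_{L_1},\,\res \Oo_{L_2}/\res \Oo_{L_1}}, \qquad n_{\Oo_{K_1},\Oo'_{K_1},K_2/K_1} = n_{\bar{\Oo}_{K_1},\,\res \Oo_{K_2}/\res \Oo_{K_1}},
\end{equation*}
yielding the claimed count. The argument is essentially bookkeeping, and I do not expect a serious obstacle; the one point requiring care is tracking that the various restriction maps commute with the $\Oo \mapsto \Oo \div (-)$ bijections, but this is precisely what Remark~\ref{meteor}.\ref{orchid} was designed to record.
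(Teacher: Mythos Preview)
Your proposal is correct and follows essentially the same route as the paper's proof: pass to residue fields via the bijections of Remark~\ref{meteor}, apply Lemma~\ref{central-diamond} to the residue-field square, and convert the resulting count back using Lemma~\ref{smooth-1}. One tiny slip: it is the \emph{horizontal} extensions $\res \Oo_{L_2}/\res \Oo_{L_1}$ and $\res \Oo_{K_2}/\res \Oo_{K_1}$ that must be finite normal to invoke Lemma~\ref{central-diamond}, not the vertical ones; normality comes from Lemma~\ref{normal-exts} and finiteness from the degree bound recalled just before Lemma~\ref{central-diamond}.
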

\begin{proof}
  Let $\ell_i$ and $k_i$ denote the residue fields of $\Oo_{L_i}$ and
  $\Oo_{K_i}$, respectively.  By Remark~\ref{meteor}.\ref{orchid}
  there is a commutative diagram
  \begin{equation*}
    \xymatrix{
      \Val(L_1|\Oo_{L_1}) \ar[dd] \ar[dr] & & \Val(L_2|\Oo_{L_2}) \ar[ll] \ar[dr] \ar[dd] & \\
      & \Val(K_1|\Oo_{K_1}) \ar[dd] & & \Val(K_2|\Oo_{K_2}) \ar[ll] \ar[dd] \\
      \Val(\ell_1) \ar[dr] & & \Val(\ell_2) \ar[ll] \ar[dr] & \\
      & \Val(k_1) & & \Val(k_2) \ar[ll]
    }
  \end{equation*}
  with vertical maps bijections.  Under the bijection
  $\Val(L_2|\Oo_{L_2}) \to \Val(\ell_2)$, the set $\mathcal{S}$
  corresponds to the set of $\Oo$ on $\ell_2$ restricting to
  $\Oo'_{K_2} \div \Oo_{K_2}$ and $\Oo'_{L_1} \div \Oo_{L_1}$.  Now,
  by the commutative diagram, the fact that $\Oo'_{K_2}$ and
  $\Oo'_{L_1}$ both restrict to $\Oo'_{K_1}$ implies that $\Oo'_{K_2}
  \div \Oo_{K_2}$ and $\Oo'_{L_1} \div \Oo_{L_1}$ restrict to
  $\Oo'_{K_1} \div \Oo_{K_1}$.  By assumption, $k_1$ is relatively
  algebraically closed in $\ell_1$, so by Lemma~\ref{central-diamond},
  \begin{equation*}
    |\mathcal{S}| = \frac{n_{\Oo'_{L_1} \div
        \Oo_{L_1},\ell_2/\ell_1}}{n_{\Oo'_{K_1} \div
        \Oo_{K_1},k_2/k_1}}.
  \end{equation*}
  By Lemma~\ref{smooth-1},
  \begin{equation*}
    \frac{n_{\Oo'_{L_1} \div \Oo_{L_1},\ell_2/\ell_1}}{n_{\Oo'_{K_1}
        \div \Oo_{K_1},k_2/k_1}} =
    \frac{n_{\Oo_{L_1},\Oo'_{L_1},L_2/L_1}}{n_{\Oo_{K_1},\Oo'_{K_1},K_2/K_1}}.
  \end{equation*}
\end{proof}
\begin{definition}
  Let $P$ be a finite poset.
  \begin{enumerate}
  \item Write $x \rhd y$ if $x > y$ and there is no $z$ such that $x >
    z > y$.
  \item A \emph{choice system} on $P$ is a collection of sets
    $\mathcal{S}_x$ for $x \in P$ and relations $\mathcal{R}_{x,y}
    \subseteq \mathcal{S}_x \times \mathcal{S}_y$ for $x \rhd y$.
  \item Given a choice system on $P$ and a downwards closed subset $P'
    \subseteq P$, a \emph{partial choice} on $P'$ is a function $f$ on
    $P'$ such that
    \begin{equation*}
      \forall x \in P' : f(x) \in \mathcal{S}_x
    \end{equation*}
    \begin{equation*}
      \forall x \in P' \forall y \lhd x : f(x) \mathcal{R}_{x,y} f(y).
    \end{equation*}
    We write $\Gamma(P')$ for the collection of partial choices on
    $P'$.
  \item A choice system on $P$ is \emph{smooth at $x$} if there is a
    finite positive cardinal $n$ such that for any downward closed set
    $P' \subseteq P$ containing $x$ as a maximal element, every fiber
    of the restriction map
    \begin{equation*}
      \Gamma(P') \to \Gamma(P' \setminus \{x\})
    \end{equation*}
    has size $n$.
  \end{enumerate}
\end{definition}
\begin{remark}\label{the-point}
  Fix a choice system on a finite poset $P$, and let $P'$ be a
  downward closed subset of $P$.  If the choice system is smooth at
  every $x \in P \setminus P'$, then every fiber of the restriction
  map
  \begin{equation*}
    \Gamma(P) \to \Gamma(P')
  \end{equation*}
  has size $n$, for some finite positive $n$.
\end{remark}
\begin{theorem} \label{most-annoying}
  Fix a finite tree $P$.  Let $L/K$ be an extension of
  models of $T_P^0$.  Suppose that for every $p \in P$, the $p$th
  residue field extension $\res \Oo^L_p / \res \Oo^K_p$ is relatively
  algebraically closed.  Suppose we are given a diagram of pure fields
  \begin{equation*}
    \xymatrix{ L \ar[r] & L' \\ K \ar[u] \ar[r] & K' \ar[u]}
  \end{equation*}
  where $L'/L$ and $K'/K$ are finite normal extensions.  Let
  $\mathcal{S}_L$ and $\mathcal{S}_K$ be the set of extensions of the
  $T_P^0$-structures to $L'$ and $K'$, respectively.  Then
  \begin{enumerate}
  \item The sets $\mathcal{S}_L$ and $\mathcal{S}_K$ are finite.
  \item The restriction map $\mathcal{S}_L \to \mathcal{S}_K$ is
    surjective.
  \item Every fiber of this restriction map has the same size.
  \end{enumerate}
\end{theorem}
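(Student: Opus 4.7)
The plan is to recognize each of $\mathcal{S}_L$, $\mathcal{S}_K$, and every fiber of the restriction map $\mathcal{S}_L \to \mathcal{S}_K$ as the total-choice set $\Gamma(P)$ for a suitable choice system on $P$, and then invoke Remark~\ref{the-point}, with smoothness furnished by Lemma~\ref{smooth-1} for part~(1) and by Lemma~\ref{central-cube} for parts~(2) and~(3). Because $P$ is a tree, every nonminimal element $x$ has a unique predecessor $y \lhd x$, so the compatibility required of a partial choice reduces to a constraint from the single covering pair $(x,y)$; this fits the tree-of-valuations shape perfectly.

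For part~(1), I would take $\mathcal{S}_x$ to be the (finite) set of valuation rings on $L'$ extending $\Oo^L_x$, and $\mathcal{R}_{x,y}$ to be inclusion of valuation rings, so that $\Gamma(P)$ is exactly $\mathcal{S}_L$. Smoothness at $\bot$ is trivial since $\Oo^{L'}_\bot$ is forced to be $L'$, and smoothness at $x \ne \bot$ is exactly Lemma~\ref{smooth-1}: for any already-chosen extension of $\Oo^L_y$ to $L'$, the number of valuation rings on $L'$ contained in it and extending $\Oo^L_x$ equals $n_{\Oo^L_y,\Oo^L_x,L'/L}$, independent of the partial choice at $y$. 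Remark~\ref{the-point} with $P' = \emptyset$ then gives $|\mathcal{S}_L| < \infty$, and the identical argument handles $\mathcal{S}_K$.

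For parts~(2) and~(3), I would fix $\sigma \in \mathcal{S}_K$ and build a second choice system whose total choices are precisely the fiber over $\sigma$: let $\mathcal{S}_x$ be the set of valuation rings $\Oo$ on $L'$ with $\Oo \cap L = \Oo^L_x$ and $\Oo \cap K' = \sigma(x)$, again with $\mathcal{R}_{x,y}$ inclusion. Smoothness at $\bot$ is trivial; at $x \ne \bot$, I would apply Lemma~\ref{central-cube} with $L_1 = L$, $L_2 = L'$, $K_1 = K$, $K_2 = K'$, taking $\Oo_{L_2}$ to be the partial choice already made at $y$ and $(\Oo'_{L_1},\Oo'_{K_1},\Oo'_{K_2}) = (\Oo^L_x,\Oo^K_x,\sigma(x))$. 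The theorem's standing hypothesis that $\res \Oo^K_y$ is relatively algebraically closed in $\res \Oo^L_y$ is precisely the relative-algebraic-closure condition of the lemma, and its conclusion gives fiber cardinality $n_{\Oo^L_y,\Oo^L_x,L'/L}/n_{\Oo^K_y,\Oo^K_x,K'/K}$, a positive integer depending only on $x$ --- not on $\sigma$ or on the partial choice. Remark~\ref{the-point} then simultaneously delivers surjectivity (the fiber over $\sigma$ is nonempty) and equality of fiber sizes (the product is independent of $\sigma$).

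The principal obstacle I anticipate is the notational bookkeeping needed to feed this setup into Lemma~\ref{central-cube}: one must verify the various restriction compatibilities (for instance $\sigma(x) \cap K = \Oo^K_x$, from $\sigma \in \mathcal{S}_K$, and $\Oo^L_x \cap K = \Oo^K_x$, because $L/K$ is an extension of $T_P^0$-structures), and confirm that the tree structure genuinely reduces compatibility in $\Gamma(P')$ to the covering relations $\rhd$. Beyond these formalities, and the single substantive input of the relative algebraic closure hypothesis (used exactly once per covering pair), the argument is a formal assembly of the choice-system machinery developed in this section.
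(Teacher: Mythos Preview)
Your proposal is correct and uses the same key ingredients as the paper: the choice-system framework of Remark~\ref{the-point}, with smoothness supplied by Lemma~\ref{smooth-1} and Lemma~\ref{central-cube}, and the relative-algebraic-closure hypothesis invoked exactly at the application of Lemma~\ref{central-cube}. The only organizational difference is that the paper packages everything into a \emph{single} choice system on the product poset $Q = P \times \{0,1\}$, with the bottom layer $P \times \{0\}$ recording the $K'$-choices and the top layer $P \times \{1\}$ recording the $L'$-choices; then $\Gamma(P \times \{0\}) = \mathcal{S}_K$, $\Gamma(Q) = \mathcal{S}_L$, and Remark~\ref{the-point} applied once to the pair $Q' = P \times \{0\} \subseteq Q$ gives all three conclusions simultaneously. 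Your version instead builds a separate choice system on $P$ for each fixed $\sigma \in \mathcal{S}_K$ and reads off that the resulting $|\Gamma(P)|$ is a positive integer independent of $\sigma$. Both arguments unwind to the same product $\prod_{x \ne \bot} n_{\Oo^L_y,\Oo^L_x,L'/L}/n_{\Oo^K_y,\Oo^K_x,K'/K}$; the paper's packaging is slightly slicker, while yours makes the independence from $\sigma$ more visibly explicit.
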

\begin{proof}
  Let $Q$ be the poset product of $P$ and the two-element total
  order $\{0,1\}$.  Note that all the relations $x \rhd y$ in $Q$ are
  of the following forms:
  \begin{itemize}
  \item $(x,1) \rhd (x,0)$.
  \item $(x,i) \rhd (y,i)$ where $i \in \{0,1\}$ and $y$ is the
    ``parent'' of $x$ in the tree $P$, i.e., $x \rhd y$.
  \end{itemize}
  We build a choice system on $Q$ as follows:
  \begin{itemize}
  \item $\mathcal{S}_{(x,0)}$ is the set of extensions (trivial if $x
    = \bot$) of $\Oo^K_x$ to $K'$.
  \item $\mathcal{S}_{(x,1)}$ is the set of extensions (trivial if $x
    = \bot$) of $\Oo^L_x$ to $L'$.
  \item If $\Oo^{L'}_x \in \mathcal{S}_{(x,1)}$ and $\Oo^{K'}_x \in
    \mathcal{S}_{(x,0)}$, then $\Oo^{L'}_x\mathcal{R}\Oo^{K'}_x$ holds
    iff $\Oo^{L'}_x$ extends $\Oo^{K'}_x$.
  \item If $y$ is the parent of $x$ in $P$, if $\Oo^{K'}_x \in
    \mathcal{S}_{(x,0)}$, and $\Oo^{K'}_y \in \mathcal{S}_{(y,0)}$,
    then $\Oo^{K'}_x \mathcal{R} \Oo^{K'}_y$ holds iff $\Oo^{K'}_x
    \subseteq \Oo^{K'}_y$.
  \item If $y$ is the parent of $x$ in $P$, if $\Oo^{L'}_x \in
    \mathcal{S}_{(x,0)}$, and $\Oo^{L'}_y \in \mathcal{S}_{(y,0)}$,
    then $\Oo^{L'}_x \mathcal{R} \Oo^{L'}_y$ holds iff $\Oo^{L'}_x
    \subseteq \Oo^{L'}_y$.
  \end{itemize}
  If $Q' = P \times \{0\}$, then a partial choice function on $Q'$ is
  an extension of the $T_P^0$-structure from $K$ to $K'$, and a
  partial choice function on $Q$ is an extension of the
  $T_P^0$-structure from $L$ to $L'$.  So it suffices to show that the
  choice system is smooth at every point $(x,i)$.

  The case where $x = \bot$ is easy, so assume $x > \bot$.  Let $y$ be
  the ``parent'' of $x$.  If $i = 0$, smoothness at $(x,0)$ follows by
  Lemma~\ref{smooth-1}.  Indeed, the number of valid choices for
  $\Oo^{K'}_x$ consistent with $\Oo^{K'}_y$ and $\Oo^K_x$ is exactly
  $n_{\Oo^K_y,\Oo^K_x,K'/K}$, which does not depend on the choices.

  Likewise, the case $i = 1$ follows by Lemma~\ref{central-cube}: the
  number of valid choices for $\Oo^{L'}_x$ consistent with
  $\Oo^{L'}_y$, $\Oo^{K'}_x$, and $\Oo^L_x$ is exactly
  \begin{equation*}
    \frac{n_{\Oo^L_y,\Oo^L_x,L'/L}}{n_{\Oo^K_y,\Oo^K_x,K'/K}}.
  \end{equation*}
  Again, this does not depend on the choices so far.
\end{proof}

\section{Probable truth} \label{probable-truth}
\begin{theorem} \label{prob-tr}
  There is a unique way to assign a probability
  $\Pp(\varphi(\vec{a})|K)$ to every model $K \models T_P^0$, tuple
  $\vec{a}$ from $K$, and formula $\varphi(\vec{a})$ in the language
  of $T_P$, satisfying the following properties:
  \begin{enumerate}
  \item \label{rat-01} $\Pp(\varphi(\vec{a})|K)$ is a rational number
    in $[0,1]$.
  \item \label{1-neg} $\Pp(\neg \varphi(\vec{a})|K) = 1 -
    \Pp(\varphi(\vec{a})|K)$.
  \item \label{additive} $\Pp(\varphi(\vec{a})|K) +
    \Pp(\psi(\vec{b})|K) = \Pp(\varphi(\vec{a}) \vee \psi(\vec{b}) |
    K) + \Pp(\varphi(\vec{a}) \wedge \psi(\vec{b}) | K)$.
  \item \label{exists} $\Pp(\varphi(\vec{a})|K) > 0$ if and only if $M
    \models \varphi(\vec{a})$ for at least one $T_P$-model $M \ge K$.
  \item \label{forall} $\Pp(\varphi(\vec{a})|K) = 1$ if and only if
    $M \models \varphi(\vec{a})$ for every $T_P$-model $M \ge K$.
  \item \label{weighting} If $L/K$ is a finite normal extension of
    pure fields, and $L_1, \ldots, L_n$ enumerate the
    $T_P^0$-structures on $L$ extending the given structure on $K$,
    then $\Pp(\varphi(\vec{a})|K)$ is the average of
    $\Pp(\varphi(\vec{a})|L_i)$.
  \item \label{isoinvar} If $f : K_1 \to K_2$ is an isomorphism of
    $T_P^0$-models, and $f(\vec{a}_1) = \vec{a}_2$, then
    $\Pp(\varphi(\vec{a}_1)|K_1) = \Pp(\varphi(\vec{a}_2)|K_2)$.
  \end{enumerate}
\end{theorem}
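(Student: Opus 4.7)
The plan is to reduce $\Pp(\varphi(\vec{a})|K)$ to a finite counting problem on a finite normal extension of $K$ that already decides $\varphi(\vec{a})$. By Corollary~\ref{almost-qe}, I can find a finite normal $L/K$ such that for any $T_P$-extension $M$ of $K$, the truth of $\varphi(\vec{a})$ in $M$ depends only on the $T_P^0$-structure that $M$ induces on the copy of $L$ inside it. I will enumerate the $T_P^0$-structures on $L$ extending that of $K$ as $L_1, \ldots, L_n$ (a finite set by Theorem~\ref{most-annoying}(1)) and write $L_i \models^\ast \varphi(\vec{a})$ if $\varphi(\vec{a})$ holds in some (equivalently every) $T_P$-extension of $L_i$. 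Provisionally I define
\[
\Pp(\varphi(\vec{a})|K) := \frac{|\{i : L_i \models^\ast \varphi(\vec{a})\}|}{n}.
\]

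The first real task is to check that this does not depend on the choice of $L$. Given two candidates $L$ and $L'$, their compositum $L'' = L L'$ inside a fixed algebraic closure is again finite normal over $K$ and still decides $\varphi(\vec{a})$. I will apply Theorem~\ref{most-annoying} with the trivial extension $K = K$ of $T_P^0$-models (so the hypothesis on residue field extensions is vacuous), $K' := L$, and $L' := L''$: the restriction map from $T_P^0$-structures on $L''$ to those on $L$ is surjective with all fibers of a common size. Since $L$ already decides $\varphi$, every structure in the fiber over $L_i$ has the same $\models^\ast$-verdict as $L_i$. Hence the proportion of ``true'' structures on $L''$ equals that on $L$; symmetrically it equals the proportion on $L'$, giving well-definedness.

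With that in hand, items (1), (2), (7) are immediate, and (3) follows by choosing a common $L$ that decides both $\varphi(\vec{a})$ and $\psi(\vec{b})$ and invoking the identity $|A| + |B| = |A \cup B| + |A \cap B|$ on subsets of $\{1,\ldots,n\}$. For (4) I use that any $T_P$-extension $M$ of $K$ must embed $L$ (by algebraic closedness of $M$), inducing one of the $L_i$'s and hence witnessing $L_i \models^\ast \varphi$; (5) is (4) applied to $\neg \varphi$ via (2). The averaging property (6) is the second genuine use of Theorem~\ref{most-annoying}: I enlarge $L$ to a finite normal $L'/K$ that also decides $\varphi(\vec{a})$ (via Corollary~\ref{almost-qe} and a compositum), so the $T_P^0$-structures on $L'$ over $K$ partition into $n$ equal-sized fibers over the $L_i$, and the averaging identity becomes a counting triviality.

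Uniqueness is forced: for any $\Pp$ satisfying (1)--(7), pick $L/K$ deciding $\varphi(\vec{a})$ and apply (6) to reduce to computing each $\Pp(\varphi(\vec{a})|L_i)$; by (4), (5), and the deciding property, each such value lies in $\{0,1\}$ and is forced by $\models^\ast$. The hard part throughout is the equal-fibers conclusion of Theorem~\ref{most-annoying}, which drives both well-definedness and property (6); everything else is bookkeeping around the almost quantifier elimination of Corollary~\ref{almost-qe}.
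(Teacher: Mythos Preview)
Your proposal is correct and follows essentially the same route as the paper: define $\Pp$ as the proportion of $T_P^0$-extensions to a deciding finite normal $L$ (via Corollary~\ref{almost-qe}) that force $\varphi$ to hold, then use Theorem~\ref{most-annoying} applied to the trivial extension $K=K$ to get the equal-fiber property that drives both well-definedness (through the compositum $LL'$) and the averaging property (6). Your $\models^\ast$ notation is exactly the paper's auxiliary $\Pp'\in\{0,1\}$, and the remaining verifications (1)--(5), (7), and uniqueness are handled identically.
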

Conditions (\ref{rat-01}-\ref{forall}) say, among other things, that
$\Pp(-|K)$ defines a Keisler measure on the type space of embeddings
of $K$ into models of $T_P$.
\begin{proof}
  Let $\Pp'(-)$ be the partial function defined as follows:
  \begin{enumerate}
  \item $\Pp'(\varphi(\vec{a})|K) = 1$ if $M \models \varphi(\vec{a})$
    for every $T_P$-model $M \ge K$.
  \item $\Pp'(\varphi(\vec{a})|K) = 0$ if $M \models \neg
    \varphi(\vec{a})$ for every $T_P$-model $M \ge K$.
  \item $\Pp'(\varphi(\vec{a})|K)$ is undefined otherwise.
  \end{enumerate}
  Conditions \ref{exists} and \ref{forall} imply that $\Pp(-)$ must
  equal $\Pp'(-)$ when the latter is defined.

  Given $K$ and $\varphi(\vec{a})$, by Corollary~\ref{almost-qe} there
  is a finite normal extension $L/K$ such that for any $T_P$-model $M
  \ge K$, the truth of $M \models \varphi(\vec{a})$ is determined by
  the $T_P^0$-structure induced on $L$.  Let $L_1, \ldots, L_n$ be an
  enumeration of the distinct extensions of the $T_P^0$-structure from
  $K$ to $L$.  Thus $\Pp'(\varphi(\vec{a})|L_i)$ is defined for $i =
  1, \ldots, n$.  Then uniqueness of $\Pp(-)$ is clear: we must set
  \begin{equation*}
    \Pp(\varphi(\vec{a})|K) := \frac{\sum_{i = 1}^n
      \Pp'(\varphi(\vec{a})|L_i)}{n}.
  \end{equation*}
  It remains to show that this is well-defined and satisfies the
  required properties.

  Let $L'$ be another finite normal extension of $K$ which determines
  the truth of $\varphi(\vec{a})$.  We claim that $L$ and $L'$ yield
  the same value of $\Pp(\varphi(\vec{a})|K)$.  By relating $L$ to
  $LL'$ and $L'$ to $LL'$, we reduce to the case where $L' \ge L$.
  Applying Theorem~\ref{most-annoying} to the diagram
  \begin{equation*}
    \xymatrix{ K \ar[r] & L' \\ K \ar[u] \ar[r] & L \ar[u]}
  \end{equation*}
  there is an integer $m$ such that every $L_i$ has exactly $m$
  extensions $L'_{i,1}, \ldots, L'_{i,m}$ to a $T_P^0$-structure on
  $L'$.  Then $\{L'_{i,j}\}_{1 \le i \le n, ~ 1 \le j \le m}$ is an
  enumeration of the distinct $nm$-many $T_P^0$-structures on $L'$
  extending the given structure on $K$.  Moreover, every $T_P$-model
  extending $L'_{i,j}$ is a $T_P$-model extending $L_i$, so
  $\Pp'(\varphi(\vec{a})|L'_{i,j}) = \Pp'(\varphi(\vec{a})|L_i)$.
  Thus
  \begin{equation*}
    \frac{\sum_{i = 1}^n \sum_{j = 1}^m
      \Pp'(\varphi(\vec{a})|L'_{i,j})}{nm} = \frac{\sum_{i = 1}^n
      \Pp'(\varphi(\vec{a})|L_i)}{n}.
  \end{equation*}
  So the definition of $\Pp(\varphi(\vec{a})|K)$ using $L'$ agrees
  with that using $L$, and $\Pp(\varphi(\vec{a})|K)$ is well-defined.

  Condition (\ref{rat-01}) is clear, because we defined
  $\Pp(\varphi(\vec{a})|K)$ as an average of finitely many 0's and
  1's.  For Conditions (\ref{1-neg}-\ref{additive}), choose $L$ large
  enough that $\Pp'(\varphi(\vec{a})|L_i)$ and
  $\Pp'(\psi(\vec{b})|L_i)$ are well-defined for all $i$.  Then
  \begin{align*}
    \Pp'(\neg \varphi(\vec{a}) | L_i) &= 1 - \Pp'(\varphi(\vec{a})|L_i) \\
    \Pp'(\varphi(\vec{a}) \vee \psi(\vec{b}) | L_i) &=
    \max(\Pp'(\varphi(\vec{a})|L_i),\Pp'(\psi(\vec{b})|L_i)) \\
    \Pp'(\varphi(\vec{a}) \wedge \psi(\vec{b}) | L_i) &=
    \min(\Pp'(\varphi(\vec{a})|L_i),\Pp'(\psi(\vec{b})|L_i))
  \end{align*}
  for all $i$---in particular the left hand sides are well-defined.
  The desired equations then follow by averaging
  \begin{align*}
    \Pp'(\neg \varphi(\vec{a}) | L_i) &= 1 - \Pp'(\varphi(\vec{a})|L_i) \\
    \Pp'(\varphi(\vec{a}) | L_i) + \Pp'(\psi(\vec{b}) | L_i) &=
    \Pp'(\varphi(\vec{a}) \vee \psi(\vec{b}) | L_i) +
    \Pp'(\varphi(\vec{a}) \wedge \psi(\vec{b}) | L_i)
  \end{align*}
  over $i = 1, \ldots, n$.

  For (\ref{exists}), note that $\Pp(\varphi(\vec{a})|K) > 0$ if and
  only if $\Pp'(\varphi(\vec{a})|L_i) = 1$ for at least one $i$.  If
  this holds, then extending $L_i$ to a $T_P$-model $M$, we obtain a
  $T_P$-model $M$ extending $K$ in which $\varphi(\vec{a})$ holds.
  Conversely, if $\Pp'(\varphi(\vec{a})|L_i) = 0$ for all $i$, and $M$
  is any $T_P$-model extending $K$, then $M$ extends some $L_i$, and
  so $M \models \neg \varphi(\vec{a})$.

  Thus (\ref{exists}) holds.  Condition (\ref{forall}) follows from
  (\ref{exists}) and (\ref{1-neg}).

  Next consider the situation of (\ref{weighting}).  We can find a
  normal extension $L'$ of $K$ such that $L' \ge L$ and
  $\Pp'(\varphi(\vec{a})|L')$ is defined for any extension of the
  $T_P^0$-structure to $L'$.  As before, by an application of
  Theorem~\ref{most-annoying} we know that there is an integer $m$
  such that every $L_i$ has exactly $m$ extensions $L'_{i,1}, \ldots,
  L'_{i,m}$ to a $T_P^0$-structure on $L'$.  Then
  \begin{equation*}
    \frac{\sum_{i = 1}^n \Pp(\varphi(\vec{a})|L_i)}{n} = \frac{\sum_{i
        = 1}^n \frac{\sum_{j = 1}^m
        \Pp'(\varphi(\vec{a})|L'_{i,j})}{m}}{n} = \frac{\sum_{i = 1}^n
      \sum_{j = 1}^m \Pp'(\varphi(\vec{a})|L'_{i,j})}{nm} =
    \Pp(\varphi(\vec{a})|K).
  \end{equation*}
  Thus (\ref{weighting}) holds.  Finally, (\ref{isoinvar}) is clear
  from the definition.
\end{proof}

\begin{proposition}\label{key-technical}
  Let $L/K$ be an extension of models of $T_P^0$ with the following
  property: for every $p \in P$, the residue field extension $\res
  \Oo^L_p / \res \Oo^K_p$ is relatively algebraically closed.  Then
  for any formula $\varphi(\vec{a})$ with parameters $\vec{a}$ from
  $K$, we have
  \begin{equation*}
    \Pp(\varphi(\vec{a})|L) = \Pp(\varphi(\vec{a})|K).
  \end{equation*}
\end{proposition}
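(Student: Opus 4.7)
The plan is to compute both $\Pp(\varphi(\vec{a})|L)$ and $\Pp(\varphi(\vec{a})|K)$ as averages over a common finite normal extension and match the two averages using Theorem~\ref{most-annoying}.

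By Corollary~\ref{almost-qe} applied to $K$, I fix a finite normal extension $M/K$ such that the truth of $\varphi(\vec{a})$ in any $T_P$-extension of $K$ is determined by the induced $T_P^0$-structure on $M$. Let $M_1,\ldots,M_n$ enumerate the $T_P^0$-expansions of $M$ extending the given structure on $K$. Writing $\Pp'(-|-)$ for the $\{0,1\}$-valued function introduced in the proof of Theorem~\ref{prob-tr}, the construction there gives
\begin{equation*}
  \Pp(\varphi(\vec{a})|K) = \frac{1}{n} \sum_{i=1}^n \Pp'(\varphi(\vec{a})|M_i).
\end{equation*}

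Next, embed $M$ over $K$ into an algebraic closure of $L$ and form the compositum $LM$. Then $LM/L$ is finite normal, and because $M \subseteq LM$, the truth of $\varphi(\vec{a})$ in any $T_P$-extension of $L$ is still determined by the $T_P^0$-structure induced on $LM$. The hypothesis that each residue field extension $\res\Oo^L_p / \res\Oo^K_p$ is relatively algebraically closed is exactly what Theorem~\ref{most-annoying} requires when applied to
\begin{equation*}
  \xymatrix{L \ar[r] & LM \\ K \ar[u] \ar[r] & M \ar[u].}
\end{equation*}
That theorem then tells us that the restriction map from $T_P^0$-expansions of $LM$ (over $L$) to $T_P^0$-expansions of $M$ (over $K$) is surjective with every fiber of the same finite size $m$. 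So I enumerate the $T_P^0$-expansions of $LM$ over $L$ as $(LM)_{i,j}$ with $1 \le i \le n$ and $1 \le j \le m$, where $(LM)_{i,j}$ restricts to $M_i$.

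Since the $T_P^0$-structure on $M$ already determines the truth of $\varphi(\vec{a})$ in any $T_P$-extension, $\Pp'(\varphi(\vec{a})|(LM)_{i,j}) = \Pp'(\varphi(\vec{a})|M_i)$ for every pair $(i,j)$. Averaging,
\begin{equation*}
  \Pp(\varphi(\vec{a})|L) = \frac{1}{nm} \sum_{i=1}^n \sum_{j=1}^m \Pp'(\varphi(\vec{a})|(LM)_{i,j}) = \frac{1}{n} \sum_{i=1}^n \Pp'(\varphi(\vec{a})|M_i) = \Pp(\varphi(\vec{a})|K),
\end{equation*}
which is what we want. The delicate step is the uniform-fiber-size conclusion of Theorem~\ref{most-annoying}; relative algebraic closure of the residue extensions is exactly the hypothesis that lets us apply it, and everything else is bookkeeping.
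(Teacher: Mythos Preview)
Your proof is correct and follows essentially the same route as the paper's own argument: choose a finite normal extension of $K$ deciding $\varphi(\vec a)$, form its compositum with $L$, invoke Theorem~\ref{most-annoying} to get equal-sized fibers for the restriction of $T_P^0$-structures, and average. The only differences are notational (the paper writes $K', L'$ where you write $M, LM$).
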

\begin{proof}
  As in the proof of Theorem~\ref{prob-tr}, let $\Pp'(\varphi|K)$ be
  0, 1, or undefined, depending on whether $\varphi$ holds in none,
  all, or some of the models of $T_P$ extending $K$.  Using
  Corollary~\ref{almost-qe}, choose a finite normal extension $K'$ of
  $K$ such that $\Pp'(\varphi(\vec{a})|K')$ is defined for every
  $T_P^0$-structure on $K'$ extending the given structure on $K$.  Let
  $L' = LK'$.  Let $K'_1, \ldots, K'_n$ enumerate the
  $T_P^0$-structures on $K'$ extending $K$.  By
  Theorem~\ref{most-annoying}, there is an integer $m$ such that for
  every $K'_i$, there are exactly $m$-many $T_P^0$-structures
  $L'_{i,1}, \ldots, L'_{i,m}$ on $L'$ extending $K'_i$ and $L$.  Note
  that $\{L'_{i,j}\}_{1 \le i \le n,~ 1 \le j \le m}$ is an exhaustive
  listing of the distinct $T_P^0$-structures on $L'$ extending $L$.

  For any $i, j$, note that $\Pp'(\varphi(\vec{a})|L'_{i,j})$ is
  defined and equals $\Pp'(\varphi(\vec{a})|K'_i)$.  Indeed, if $M$ is
  a model of $T_P$ extending $L'_{i,j}$, then $M$ is a model of $T_P$
  extending $K'_i$, so whether $M \models \varphi(\vec{a})$ must agree
  with $\Pp'(\varphi(\vec{a})|K'_i)$.  So
  \begin{equation*}
    \Pp'(\varphi(\vec{a})|L'_{i,j}) = \Pp'(\varphi(\vec{a})|K'_i).
  \end{equation*}
  Averaging over all $i$ and $j$ immediately implies
  \begin{equation*}
    \Pp(\varphi(\vec{a})|L) = \Pp(\varphi(\vec{a})|K).
  \end{equation*}
\end{proof}

\section{Indiscernible sequences and relative closure} \label{sec:coheirs}
\begin{lemma}\label{the-lemma}
  Let $\Mm$ be a valued field that is a monster model of either ACVF
  or ACF with the trivial valuation.  Let $a$ be a tuple and let
  \begin{align*}
    \ldots, b^-_{-1}, & b^-_0, b^-_1, \ldots, \\
    \ldots, b_{-1}, & b_0, b_1, \ldots, \\
    \ldots, b^+_{-1}, & b^+_0, b^+_1, \ldots
  \end{align*}
  be an $a$-indiscernible sequence of tuples, of length $3 \times
  \Zz$.  For $S \subseteq \Zz$ let $b_S$ denote $\{b_i : i \in S\}$
  and similarly for $b^+_S$ and $b^-_S$.  Let $K_i$ and $L$ be the
  algebraically closed subfields of $\Mm$ generated by $b^-_\Zz b_i
  b^+_\Zz$ and $b^-_\Zz b_\Zz b^+_\Zz$, respectively.  Abusing
  notation slightly, let $K_i(a)$ and $L(a)$ denote the \emph{perfect}
  subfields of $\Mm$ generated by $aK_i$ and $aL$, respectively.  Then
  the residue field of $K_i(a)$ is relatively algebraically closed in
  the residue field of $L(a)$.
\end{lemma}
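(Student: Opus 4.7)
My strategy has two layers: a pure-field regularity argument that directly settles the case of trivially-valued ACF, and a residue-field independence argument that extends to ACVF using the $a$-indiscernibility of the full $3 \times \Zz$-sequence.

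For the pure-field step, I would first note that $K_i$ is algebraically closed (it is an $\acl$-closure in the ACF reduct), so the extension $L/K_i$ is regular. Regularity is preserved under base change along $K_i \hookrightarrow K_i(a)$, so $L \otimes_{K_i} K_i(a)$ is a domain and $K_i(a)$ is relatively algebraically closed in $\Frac(L \otimes_{K_i} K_i(a))$ as a pure field. A brief check identifies this fraction field with $L(a)$: using the $a$-indiscernibility of $b_\Zz$ over $b^-_\Zz b^+_\Zz$ in the stable ACF reduct, the $b_j$ are either all equal (in which case the lemma is trivial) or algebraically independent over $a b^-_\Zz b^+_\Zz$, and in the latter case one concludes that the multiplication map $L \otimes_{K_i} K_i(a) \to \Mm$ is injective, so its image $L \cdot K_i(a)$ is perfect (both factors are) and thus equals $L(a)$. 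This settles the lemma when $\Mm$ is trivially-valued ACF, since there the residue field coincides with the field.

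For the ACVF case, I aim for the stronger statement that $\res L(a)$ is an \emph{independent} amalgamation of $\res L$ and $\res K_i(a)$ over $\res K_i$ as pure fields. Because the residue field of an algebraically closed valued field is algebraically closed, $\res K_i$ is algebraically closed, so such an independent amalgamation is regular and $\res K_i(a)$ is automatically relatively algebraically closed in $\res L(a)$, which is the desired conclusion. I would establish residue-field independence by contradiction: a nontrivial $\res K_i$-linear relation $\sum_k \bar\ell_k \bar m_k = 0$ in $\res L(a)$ with the $\bar\ell_k \in \res L$ linearly independent over $\res K_i$ and the $\bar m_k \in \res K_i(a)$ not all zero would lift to a nonzero element of $\mm_{L(a)}$. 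Expressing the lifts as rational expressions in $a$, $b^-_\Zz$, $b^+_\Zz$, $b_i$, and finitely many further middle indices, and then applying order-preserving index shifts on the middle $\Zz$ (available via the indiscernibility of $b_\Zz$ over $a b^-_\Zz b^+_\Zz$ inherited from the $3 \times \Zz$-indiscernibility over $a$), I would produce many distinct variants of this relation and exceed the finite-dimensional constraint imposed by the $\res K_i$-linear independence of the $\bar\ell_k$.

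The main obstacle is the residue-field step, specifically coordinating index shifts with the valuation structure. The three-fold length of the sequence, rather than a single $\Zz$, is used so that the middle sequence remains indiscernible over $a$ together with the outer pieces, giving an ample supply of order-preserving shifts on the middle indices that fix $a$, the outer $\Zz$-parts, and the index $i$. A secondary technical point is handling the perfect closure in the definition of $K_i(a)$ and $L(a)$, which is innocuous since Frobenius commutes with both $\res$ and with automorphisms of $\Mm$, but must be tracked carefully in the bookkeeping of how lifts and shifts interact.
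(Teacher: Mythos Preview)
There is a real gap in your ACVF step. You aim to show that $\res L(a)$ is an independent amalgamation of $\res L$ and $\res K_i(a)$ over $\res K_i$, and then infer that $\res K_i(a)$ is relatively algebraically closed in $\res L(a)$. But that inference fails: linear disjointness of $\res L$ and $\res K_i(a)$ over the algebraically closed field $\res K_i$ only gives relative algebraic closedness of $\res K_i(a)$ in the \emph{compositum} $(\res L)\cdot(\res K_i(a))$, whereas $\res L(a)=\res\bigl(L\cdot K_i(a)\bigr)$ can be strictly larger --- the residue field of a compositum need not be the compositum of the residue fields (take $x\in L$ and $y\in K_i(a)$ of equal positive value with $\res(x/y)$ outside both $\res L$ and $\res K_i(a)$). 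Nothing in your outline controls these ``new'' residues, so even a successful linear-disjointness argument would not finish the lemma. Your contradiction sketch via index shifts is also too vague to carry weight: the shifts move the $\bar\ell_k$ along with everything else, and no concrete finite-dimensional obstruction is identified. (Separately, in the ACF layer the dichotomy ``all $b_j$ equal or algebraically independent over $ab^-_\Zz b^+_\Zz$'' is false for tuples --- consider $b_j=(c,d_j)$ with $c$ fixed and the $d_j$ independent --- though the statement you actually need, $L\forkindep_{K_i} a$ in the ACF reduct, may still be salvageable by a different argument.)

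The paper's proof avoids all of this by working directly with a putative $\alpha\in\res L(a)$ algebraic over $k:=\res K_i(a)$ but not in $k$. One writes $\alpha=\res\bigl(P(a,c)/Q(a,c)\bigr)$ with $c$ a finite tuple from $L$, so only finitely many middle indices are involved. The key move is that suitable tails of the outer sequences, concatenated with the non-$i$ part of $b_\Zz$, are mutually indiscernible over $a$ together with a finite piece of $K_i$; one then builds an automorphism $\sigma$ over that finite piece (and over $a$) which pushes the extra middle $b$'s into $b^-_\Zz\cup b^+_\Zz$. Then $\sigma(c)\in K_i$, hence $\sigma(\alpha)\in k$; but $\sigma$ fixes the finite set of $k$-conjugates of $\alpha$ setwise, forcing one of them into $k$, a contradiction. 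The $3\times\Zz$ length is used precisely here, to give the outer sequences room to absorb the finitely many displaced middle indices.
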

\begin{proof}
  Without loss of generality, $i = 0$.  Let $k$ be the residue field
  of $K_0(a)$ and $\ell$ be the residue field of $L(a)$.  Take $\alpha
  \in \ell \cap k^{alg} \setminus k$.  Let $S$ be the set of roots of
  the minimal polynomial of $\alpha$ over $k$.  This is a
  $K_0(a)$-definable finite set, so it is $F(a)$-definable where $F =
  \acl(b^-_S b_0 b^+_S)$, for some finite $S \subseteq \Zz$.  Because
  $\alpha \in \ell$, we can write $\alpha$ as
  \begin{equation*}
    \alpha = \res \frac{P(a,c)}{Q(a,c)}
  \end{equation*}
  where $P, Q$ are polynomials with integral coefficients and $c$ is a
  tuple from $L = \acl(b^-_\Zz b_\Zz b^+_\Zz)$.  Increasing $S$, we
  may assume $c \in \acl(b^-_S b_S b^+_S)$ and $0 \in S$.  Let
  \begin{equation*}
    i_1 < \cdots < i_n < 0 < j_1 < \cdots < j_m
  \end{equation*}
  be the elements of $S$ in order.  Note
  that the two sequences
  \begin{align*}
    b^-_{j_m + 1}, b^-_{j_m + 2} \ldots, & \ldots, b_{-3}, b_{-2}, b_{-1} \\
    b_1, b_2, b_3, \ldots, & \ldots, b^+_{i_1 - 2}, b^+_{i_1 - 1}
  \end{align*}
  are mutually indiscernible over $ab^-_Sb_0b^+_S$, hence over
  $F(a)^{alg}$.\footnote{In general, indiscernibility over $A$
    is the same thing as indiscernibility over $\acl(A)$.}  Choose
  $i'_1 < \cdots < i'_n$ greater than $j_m$ and $j'_1 < \cdots < j'_m$
  less than $i_1$.  Then
  \begin{align*}
    b_{i_1} \cdots b_{i_n} b_{j_1} \cdots b_{j_m} \equiv_{F(a)^{alg}} 
    b^-_{i'_1}\cdots b^-_{i'_n} b^+_{j'_1} \cdots b^+_{j'_m}
  \end{align*}
  by the mutual indiscernibility.  Let $\sigma \in
  \Aut(\Mm/F(a)^{alg})$ be an automorphism moving the left hand side
  to the right hand side.  Then
  \begin{equation*}
    \sigma(b^-_S b_S b^+_S) = b^-_S b^-_{i'_1} \cdots b^-_{i'_n} b_0
    b^+_{j'_1} \cdots b^+_{j'_m} b^+_S
  \end{equation*}
  and so
  \begin{equation*}
    \sigma(c) \in \acl(b^-_S b^-_{i'_1} \cdots b^-_{i'_n} b_0
    b^+_{j'_1} \cdots b^+_{j'_m} b^+_S) \subseteq \acl(b^-_\Zz b_0 b^+_\Zz)
  \end{equation*}
  so $\sigma(c)$ is a tuple from $K_0$.  Thus $\sigma(\alpha)$ is a
  residue from $K_0(a)$.  Now $\sigma$ fixes $S$ setwise, so $S$
  intersects $k$, a contradiction.
\end{proof}

\section{Finite burden} \label{finite-burden}

\begin{theorem}
  Let $N$ be the number of ``leaves'' in $P$, i.e., maximal elements.
  Then $T_P$ has burden no more than $2N$.\footnote{The argument could
    probably be improved to get $N$ rather $2N$.}
\end{theorem}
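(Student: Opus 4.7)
The plan is to argue by contradiction: assume an inp-pattern of depth $2N+1$ in a monster model of $T_P$ and derive a contradiction using the Keisler measure $\Pp(\,\cdot\mid\cdot\,)$ from Theorem~\ref{prob-tr}, its invariance statement Proposition~\ref{key-technical}, and Lemma~\ref{the-lemma}. By standard Ramsey-style extraction for NTP${}_2$ theories, one may replace the pattern by formulas $\varphi_i(x;\bar y_i)$ and a mutually indiscernible array $(\bar b^i_j)_{1\le i\le 2N+1,\,j\in\Zz}$ such that row $i$ is $k_i$-inconsistent while every column is consistent. Stretching each row to length $3\times\Zz$ so that it splits as $\bar b^{i,-}_\Zz,\bar b^i_\Zz,\bar b^{i,+}_\Zz$ and extracting once more, I arrange a realization $a$ of the central column over which, for each $i$, the outer thirds $\bar b^{i,-}_\Zz\bar b^{i,+}_\Zz$ together with all other rows remain indiscernible.

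For each $p\in P$ and each row $i$, I apply Lemma~\ref{the-lemma} in the reduct to the valuation $\Oo_p$ (or the pure field reduct when $p=\bot$). This gives that the residue field of the algebraic closure $K_i$ generated by $a$, the other rows, and the outer thirds $\bar b^{i,\pm}_\Zz$ is relatively algebraically closed in the residue field of the full closure $L=\acl(a,\text{everything})$, at every $p$ simultaneously. Proposition~\ref{key-technical} then gives
\[
\Pp(\varphi_i(a,\bar b^i_j)\mid K_i)=\Pp(\varphi_i(a,\bar b^i_j)\mid L)
\]
for every $j\in\Zz$; combined with the iso-invariance Theorem~\ref{prob-tr}(\ref{isoinvar}) applied through $K_i$, the common value $p_i$ is independent of $j$. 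Column consistency gives $p_i>0$ by Theorem~\ref{prob-tr}(\ref{exists}), while $k_i$-inconsistency combined with additivity Theorem~\ref{prob-tr}(\ref{additive}) forces $p_i\le 1-1/k_i$.

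The step I expect to be the main obstacle is tying the count of rows with $p_i>0$ to $2N$. My plan is to use the tree structure of $P$ to assign to each such row a pair $(p,\varepsilon)$ with $p$ a leaf of $P$ and $\varepsilon\in\{+,-\}$ indicating which outer third of the row produces the nontriviality, and to show that no two rows can share the same $(p,\varepsilon)$. The argument should exploit Fact~\ref{strong-approx}: because independent valuations produce jointly independent topologies and hence contribute independently to the Keisler measure, two rows nontrivially using the same leaf from the same side would force a collapse of the measure at $L$ inconsistent with both $p_i$ lying strictly between $0$ and $1$. Summing over $N$ leaves and $2$ sides bounds the number of rows with $p_i>0$ by $2N$, contradicting the $2N+1$ rows in the pattern. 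The anticipated improvement to $N$ noted in the footnote would presumably come from bookkeeping that avoids the two-sides duplication, for instance by working with a sequence of length $\Zz$ directly rather than $3\times\Zz$ and extracting $a$-indiscernibility only on one side.
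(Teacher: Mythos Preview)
Your proposal has a genuine gap, and the mechanism you suggest for obtaining the bound $2N$ is not the one the paper uses.

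The first problem is in your application of Lemma~\ref{the-lemma} and of iso-invariance. Lemma~\ref{the-lemma} requires the \emph{entire} $3\times\Zz$ sequence to be $a$-indiscernible in the ACVF (or ACF) reduct under consideration. You only arrange that the outer thirds $\bar b^{i,\pm}_\Zz$ are $a$-indiscernible; the middle third cannot be $a$-indiscernible in the full $T_P$-structure (that would contradict $k_i$-inconsistency), and you give no argument that it is $a$-indiscernible in each reduct $(\Mm,\Oo_p)$. The same issue breaks your use of Theorem~\ref{prob-tr}(\ref{isoinvar}) to show $p_i$ is independent of $j$: the required map between the two copies of $K_i$ must be a $T_P^0$-isomorphism, i.e., respect \emph{all} the valuations, and for that you need $b^i_j \equiv b^i_{j'}$ over the base in every reduct $(\Mm,\Oo_p)$, which you have not established.

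The paper resolves exactly this difficulty, and that is where the $2N$ comes from. For each of the $N$ leaves $p$, the reduct $(K,\Oo_q:q\le p)$ is a chain of valuations and hence has dp-rank $1$ (being a reduct of the Shelah expansion of ACVF). The paper first builds an auxiliary mutually indiscernible array $(c_{i,j})$ with $c_{i,j}$ enumerating $\acl(b_{i,j}B^+_iB^-_i)$. It then invokes the standard dp-rank lemma (Lemma~4.1 of \cite{dp-add}): in a dp-rank-$1$ reduct, at most one sequence in a mutually indiscernible family can fail to be $a$-indiscernible. Applying this to both the $b$-rows and the $c$-rows across the $N$ leaf reducts, one discards at most $2N$ rows and is left with at least one row in which the full $3\times\Zz$ $b$-sequence \emph{and} the $c$-sequence are $a$-indiscernible in every reduct $(\Mm,\Oo_p)$. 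Only then does Lemma~\ref{the-lemma} apply (giving relative algebraic closure of residue fields at every $p$), and only then does $c_i\equiv_a c_j$ in every reduct yield a genuine $T_P^0$-isomorphism $K_i(a)\to K_j(a)$, so that iso-invariance gives a constant positive value for $\Pp(\varphi(a;b_i)\mid L(a))$. The contradiction with $k$-inconsistency is then obtained for this single surviving row by a pigeonhole on the measure, not across rows.

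Your proposed pigeonhole assigning each row a pair $(p,\varepsilon)$ and invoking Fact~\ref{strong-approx} does not correspond to anything in the paper, and I do not see how to make it precise: Fact~\ref{strong-approx} is a density statement about points on varieties and has no direct bearing on which rows of an inp-pattern can simultaneously have positive measure. The actual source of the bound is the dp-minimality of the $N$ chain reducts, applied via the additivity lemma.
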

\begin{proof}
Otherwise, take a mutually indiscernible inp-pattern with $3 \times
\Zz$ columns and $2N+1$ rows.  Let the $i$th row be
\begin{align*}
  \ldots, \varphi_i(x;b^-_{i,-1}), & \varphi_i(x;b^-_{i,0}), \varphi_i(x;b^-_{i,1}), \ldots, \\
  \ldots, \varphi_i(x;b_{i,-1}), & \varphi_i(x;b_{i,0}), \varphi_i(x;b_{i,1}), \ldots, \\
  \ldots, \varphi_i(x;b^+_{i,-1}), & \varphi_i(x;b^+_{i,0}), \varphi_i(x;b^+_{i,1}), \ldots  
\end{align*}
Let $B^\pm_i$ denote the set $\{\ldots, b^\pm_{i,-1}, b^\pm_{i,0},
b^\pm_{i,1}, \ldots\}$.
\begin{claim}
  There is a mutually indiscernible array $c_{i,j}$ of infinite
  tuples, such that $c_{i,j}$ is an enumeration of $\acl(b_{i,j}B^+_i
  B^-_i)$.
\end{claim}
\begin{proof}
  Let $Q = \bigcup_i (B^+_i \cup B^-_i)$.  Note that the $b_{i,j}$
  form a mutually indiscernible array over $Q$.  Take $\hat{c}_{i,0}$
  to be an enumeration of $\acl(b_{i,0}B^+_i B^-_i)$ and choose
  $\hat{c}_{i,j}$ so that $\hat{c}_{i,0}b_{i,0}$ has the same type as
  $\hat{c}_{i,j}b_{i,j}$ over $Q$.  Let $\{e_{i,j}d_{i,j}\}$ be a
  mutually indiscernible array over $Q$ extracted from
  $\{\hat{c}_{i,j}b_{i,j}\}$.  As $b_{i,j}$ was already mutually
  indiscernible over $Q$, the array $\{d_{i,j}\}$ has the same type as
  $\{b_{i,j}\}$ over $Q$.  Choose $\sigma \in \Aut(\Mm/Q)$ such that
  $\sigma(d_{i,j}) = b_{i,j}$, and set $\tilde{c}_{i,j} =
  \sigma(e_{i,j})$.  Then the $\{\tilde{c}_{i,j}\}$ are mutually
  $Q$-indiscernible because $\{e_{i,j}\}$ are.

  Because $\tp(\hat{c}_{i,j}b_{i,j}/Q)$ was
  $\tp(\hat{c}_{i,0}b_{i,0}/Q)$ for all $j$, the same holds for the
  extracted array: $\tp(e_{i,j}d_{i,j}/Q) =
  \tp(\hat{c}_{i,0}b_{i,0}/Q)$ for all $i, j$.  Therefore
  \[\tilde{c}_{i,j}b_{i,j} \equiv_Q e_{i,j}d_{i,j} \equiv_Q
  \hat{c}_{i,0}b_{i,0}.\] By choice of $\hat{c}_{i,0}$, it follows
  that $\tilde{c}_{i,j}$ is an enumeration of $\acl(b_{i,j} B^+_i
  B^-_i)$.
\end{proof}  

Fix some element $a$ such that $\varphi_i(a;b_{i,0})$ holds
for all $i$.

For each $p \in P$, consider the reduct of $\Mm$ to $(K, \mathcal{O}_q
: q \le p)$.  This reduct is a model of the theory of algebraically
closed fields with $(|[\bot,p]|-1)$-many \emph{comparable} valuations.
This theory is an expansion of ACVF by externally definable sets (in
the value group), so it has dp-rank 1.

% TODO add a citation for "by the proof of subadditivity"
Recall that $N$ is the number of minimal elements in $P$.  By Lemma
4.1 in \cite{dp-add}, we can drop no more than $2N$ rows and arrange
that
\begin{itemize}
\item Each row
  \begin{equation*}
    \ldots, b^-_{i,0}, \ldots, b_{i,0}, \ldots, b^+_{i,0}, \ldots
  \end{equation*}
  is $a$-indiscernible in every reduct $(\Mm,\mathcal{O}_p)$.
\item Each row
  \begin{equation*}
    \ldots, c_{i,-1}, c_{i,0}, c_{i,1}, \ldots
  \end{equation*}
  is $a$-indiscernible in every reduct $(\Mm,\mathcal{O}_p)$.
\end{itemize}
Since we started with $2N+1$ rows, at least one row remains.  Focus on
this one row, and drop the subscript $i$'s.  We now have the following
configuration:
\begin{enumerate}
\item The sequence
  \begin{align*}
    \ldots, b^-_{-1}, & b^-_0, b^-_1, \ldots, \\
    \ldots, b_{-1}, & b_0, b_1, \ldots, \\
    \ldots, b^+_{-1}, & b^+_0, b^+_1, \ldots
  \end{align*}
  is $a$-indiscernible in every reduct $(\Mm,\mathcal{O}_p)$.
\item The sequence
  \begin{equation*}
    \ldots, c_{-1}, c_0, c_1, \ldots
  \end{equation*}
  is $a$-indiscernible in every reduct $(\Mm,\mathcal{O}_p)$.
\item Each $c_i$ is an enumeration of $\acl(b^-_\Zz b_i b^+_\Zz)$.
\item The set of formulas
  \begin{equation*}
    \ldots, \varphi(x;b_{-1}), \varphi(x;b_0), \varphi(x;b_1), \ldots
  \end{equation*}
  is $k$-inconsistent.
\item $\varphi(a;b_0)$ holds.
\end{enumerate}
As in Lemma~\ref{the-lemma}, let $K_i$ and $L$ be the algebraically
closed subfields of $\Mm$ generated by $b^-_\Zz b_i b^+_\Zz$, and
$b^-_\Zz b_\Zz b^+_\Zz$, and let $K_i(a)$ and $L(a)$ denote the
perfect closures when $a$ is thrown in.  Note that $c_i$ is an
enumeration of $K_i$.  For any $p \in P$, the $p$th residue field of
$K_i(a)$ is relatively algebraically closed in $L(a)$ by
Lemma~\ref{the-lemma}.  Then by Proposition~\ref{key-technical},
\begin{equation*}
  \Pp(\varphi(a;b_i)|K_i(a)) = \Pp(\varphi(a;b_i)|L(a)).
\end{equation*}
Now for any $i, j$, there is an isomorphism of multi-valued fields
from $K_i(a)$ to $K_j(a)$ sending $a$ to itself and $c_i$ to $c_j$.
This holds because $c_i \equiv_a c_j$ in each reduct $(\Mm,\Oo_p)$.
It follows that
\begin{equation*}
  \Pp(\varphi(a;b_i)|L(a)) = \Pp(\varphi(a;b_0)|K_0(a)) > 0
\end{equation*}
for all $i$, where the inequality holds because $\Mm \models
\varphi(a;b_0)$.

Now take $N$ so large that $N \cdot \Pp(\varphi(a;b_0)|K_0(a)) > k$.
Then
\begin{equation*}
  \Pp(\varphi(a;b_1)|L(a)) + \Pp(\varphi(a;b_2)|L(a)) + \cdots +
  \Pp(\varphi(a;b_N)|L(a)) > k
\end{equation*}
so by a simple probabilistic argument it follows that there is some small
$M \models T_P$ extending $L(a)$ such that
\begin{equation*}
  |\{i \in \{1,\ldots,N\} : M \models \varphi(a;b_i)\}| \ge k + 1.
\end{equation*}
Since $L$ is algebraically closed, we can find some embedding of $M$
into $\Mm$ over $L$.  Let $a'$ be the image of this embedding.  By
model completeness,
\begin{equation*}
  M \models \varphi(a;b_i) \iff \Mm \models \varphi(a';b_i).
\end{equation*}
So $\varphi'(a';b_i)$ holds for at least $k+1$ values of $i$,
contradicting $k$-inconsistency.
\end{proof}

By Example~\ref{acvnf-tp},
\begin{corollary}
  If $(K,\Oo_1,\ldots,\Oo_n)$ is an algebraically closed field
  expanded with $n$ valuation rings, the resulting structure has
  finite burden.
\end{corollary}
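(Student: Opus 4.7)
The plan is to combine Example~\ref{acvnf-tp} with the preceding theorem in essentially a bookkeeping step. Given an algebraically closed multivalued field $(K,\Oo_1,\ldots,\Oo_n)$, first form the finite sub-$\wedge$-semilattice $P$ of $\Val(K)^{op}$ generated by $\{\Oo_1,\ldots,\Oo_n,K\}$, exactly as in Example~\ref{acvnf-tp}. Since every element of $P$ is of the form $\Oo_{i_1}\cdot\Oo_{i_2}\cdots\Oo_{i_k}$ for some subset $\{i_1,\ldots,i_k\}\subseteq\{1,\ldots,n\}$, the poset $P$ has at most $2^n$ elements, and in particular is a \emph{finite} tree.

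By Example~\ref{acvnf-tp}, the expanded structure $(K,\Oo_p:p\in P)$ is a model of $T_P$, and it is bi-interpretable with (indeed, a definitional expansion of) $(K,\Oo_1,\ldots,\Oo_n)$: each $\Oo_p$ for $p\in P$ is one of the $\Oo_i$ or a finite product $\Oo_{i_1}\cdots\Oo_{i_k}$, hence definable from the original predicates; conversely the $\Oo_i$ are simply among the $\Oo_p$. This makes the two structures have the \emph{same} definable sets up to coding, so they have the same burden.

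Now I would invoke the preceding theorem: $T_P$ has burden at most $2N$, where $N$ is the number of leaves of $P$. Since $P$ is finite, $N$ is finite, and so $(K,\Oo_p:p\in P)$ has finite burden. Transferring back along the bi-interpretation, $(K,\Oo_1,\ldots,\Oo_n)$ has finite burden, completing the proof.

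There is no real obstacle here; the only mild point to verify is that passing from $(K,\Oo_1,\ldots,\Oo_n)$ to its definitional expansion $(K,\Oo_p:p\in P)$ preserves burden, which is standard since burden is computed from the collection of definable sets and is invariant under bi-interpretation. The reason this works so cleanly is that all the technical content---amalgamation in $T_P^0$, the Keisler measure $\Pp(\,\cdot\,|K)$, and the indiscernibility extraction argument---has already been absorbed into the bound $2N$ for $T_P$.
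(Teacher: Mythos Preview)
Your argument is correct and is exactly the paper's approach: the paper's entire proof is the single phrase ``By Example~\ref{acvnf-tp},'' and you have simply spelled out what that entails. The only cosmetic difference is that you bound $|P|$ by $2^n$ whereas the paper (in the decidability corollary) uses the sharper tree-based bound $n^2+1$, but either suffices for finiteness.
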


\section{NIP, or lack thereof} \label{sec:nip}
\begin{lemma}
  Let $(K,\Oo_1,\Oo_2)$ be an algebraically closed field with two
  independent non-trivial valuation rings.  Then $(K,\Oo_1,\Oo_2)$ is
  not NIP, i.e., $(K,\Oo_1,\Oo_2)$ has the independence property.
\end{lemma}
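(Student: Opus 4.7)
The aim is to produce, in a sufficiently saturated $(K,\Oo_1,\Oo_2)$, a formula $\varphi(x;\bar y)$ that shatters an infinite set of parameter tuples. The central tool is Fact~\ref{strong-approx}: on every irreducible affine variety over $K$, the two metric topologies $\mathcal{T}_1$ and $\mathcal{T}_2$ are jointly independent. This is precisely the phenomenon that fails for a single valuation---where definable sets are tree-like Boolean combinations of swiss cheeses, forcing NIP---and it is the source of the combinatorial complexity to be exploited.

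The plan is to pick a formula $\varphi(x;\bar y)$ whose instances mix the two valuations in a genuinely non-tree-like way. A natural candidate has the form
\[
\varphi(x;y,z) \;:=\; (x-y)\in\mm_1 \;\wedge\; \psi(x,z;\Oo_2),
\]
where $\psi$ is an $\Oo_2$-condition chosen so that the fibres of $\varphi$ are intersections of $\mathcal{T}_1$-balls with $\mathcal{T}_2$-definable sets that, as $(y,z)$ varies, pull apart independently. One would then, for each $n$, select parameters $(y_i,z_i)_{i=1}^n$ in sufficiently general position so that the $\mathcal{T}_1$-features and the $\mathcal{T}_2$-features indexed by $i$ can be prescribed independently.

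Given such parameters and a subset $S\subseteq\{1,\ldots,n\}$, the witness $a_S$ is produced by applying Fact~\ref{strong-approx} to an auxiliary irreducible affine variety encoding the conjunction of the $i\in S$ constraints together with the negations of the $i\notin S$ constraints. Joint independence of the two topologies on that variety guarantees a point satisfying all the required open conditions simultaneously, which is precisely the shattering witness. This would yield IP, as required.

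The main obstacle is the choice of formula $\varphi$ in the second step: ACVF alone is NIP, so any formula reducible to a single-valuation Boolean combination of swiss cheeses will never shatter. One has to find a concrete $\varphi$ whose parameterized family of fibres is demonstrably non-tree-like, and whose instances remain controllable via the approximation theorem. Once such a formula is pinned down, the shattering argument itself is a routine application of Fact~\ref{strong-approx}.
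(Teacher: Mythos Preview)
Your proposal is a plan rather than a proof, and you explicitly flag the gap yourself: the choice of $\varphi$. Unfortunately the candidate you suggest cannot be made to work. If $\varphi(x;y,z)$ is a conjunction of an $\Oo_1$-ball condition in $(x,y)$ and an $\Oo_2$-condition in $(x,z)$, then for fixed parameters $(y_i,z_i)$ the $\Oo_1$-balls $\{x : x-y_i \in \mm_1\}$ are already a tree (either nested or disjoint), so a single witness $a_S$ can lie in at most one maximal such ball; no shattering is possible via the $\Oo_1$-coordinate, and symmetrically for $\Oo_2$. More fundamentally, your ``auxiliary irreducible affine variety encoding the $i\in S$ constraints and the negations of the $i\notin S$ constraints'' does not exist as stated: those constraints are valuation-theoretic open conditions, not polynomial equations, so there is nothing algebraic to hand to Fact~\ref{strong-approx}.

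The paper's proof supplies exactly the missing idea: one passes to a finite \emph{cover} so that valuation-theoretic choices become choices of sheet. Concretely, fix a prime $p$ invertible in all residue fields and a primitive $p$th root of unity $\omega$; set $U_k=\omega^k+\mm_1$, $V_k=\omega^k+\mm_2$, and $W=\{x^p : x\in U_0\cap V_0\}$. The IP formula is $\varphi(x;y):=(x+y\in W)$. For parameters one takes distinct $\epsilon_1,\ldots,\epsilon_n\in\mm_1\cap\mm_2$, and the auxiliary variety is the genuinely algebraic curve $C=\{(\vec x,y): x_i^p=y+\epsilon_i\}$, which one checks is irreducible via a Kummer-theory argument. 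On $C$ the sets $U_\eta=\{(\vec x,y)\in C : x_i\in U_{\eta(i)}\}$ and $V_{\eta'}$ are nonempty $\mathcal T_1$- and $\mathcal T_2$-opens respectively, so Fact~\ref{strong-approx} gives a point in $U_\eta\cap V_{\eta'}$ for any $\eta,\eta'$; one then computes that $y+\epsilon_i\in W$ iff $\eta(i)=\eta'(i)$. The crux you are missing is this $p$th-power trick: it converts the desired independent $\{0,1\}$-choices into independent choices among the $p$ sheets of a cyclic cover, which \emph{are} separated by open sets in each topology.
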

\begin{proof}
  Let $p$ be a prime distinct from the characteristics of $K$, $\res
  \Oo_1$, and $\res \Oo_2$.  Let $\omega \in K$ be a primitive $p$th
  root of unity.  Abusing notation, we also let $\omega$ denote its
  residues in $\res \Oo_1$ and $\res \Oo_2$.  Let $\mm_i$ denote the
  maximal ideal of $\Oo_i$.  For $k \in \Zz/p\Zz$, let $U_k$ and $V_k$
  denote $\omega^k + \mm_1$ and $\omega^k + \mm_2$.  Note that the
  $U_k$ are pairwise disjoint and their union is the set of $x$ such
  that $x^p \in U_0$.  Similarly, the $V_k$ are pairwise disjoint and
  their union is the set of $x$ such that $x^p \in V_0$.

  Let $W$ be the definable set $\{x^p : x \in U_0 \cap V_0\}$.  We
  claim that the relation
  \begin{equation*}
    \varphi(x;y) \iff x + y \in W
  \end{equation*}
  has the independence property.  Let $\epsilon_1, \ldots, \epsilon_n$
  be $n$ distinct elements in $\mm_1 \cap \mm_2$.  Consider the affine
  variety $C$ in $n+1$ variables $(x_1,\ldots,x_n,y)$ cut out by the
  equations
  \begin{equation*}
    x_i^p = y + \epsilon_i
  \end{equation*}
\begin{claim}\label{irredc}
    $C$ is irreducible.
  \end{claim}
  \begin{proof}
    It suffices to show that the ring
    \begin{equation*}
      K[X_1,\ldots,X_n,Y]/(X_1^p - Y - \epsilon_1, X_2^p - Y -
      \epsilon_2, \ldots, X_n^p - Y - \epsilon_n)
    \end{equation*}
    is an integral domain.  This follows from the more general
    property: if $R$ is a unique factorization domain, if $F =
    \Frac(R)$, if $p \in \Nn$ is a prime distinct from
    $\characteristic(F)$, if $R$ contains a primitive $p$th root of
    unity, and if $q_1,\ldots,q_n$ are elements of $R$ generating
    distinct prime ideals, then $S : = R[X_1,\ldots,X_n]/(X_1^p - q_1,
    \ldots, X_n^p - q_n)$ is an integral domain.  First note that $S$
    is a free $R$-module with basis the monomials $X_1^{s_1} \cdots
    X_n^{s_n}$ with $0 \le s_n < p$.  Therefore $S$ injects into
    \begin{equation*}
      S' := S \otimes_R F = F[X_1,\ldots,X_n]/(X_1^p - q_1, \ldots,
      X_n^p - q_n).
    \end{equation*}
    Let $L$ be the Galois extension of $F$ obtained by adding $p$th
    roots to $q_1, \ldots, q_n$; this is Galois because $R$ has the
    primitive $p$th roots of unity.  Then $S'$ and $L$ are finite
    $F$-algebras, and there is a surjection $S' \twoheadrightarrow L$.
    It suffices to show that $\dim_F S' = [L : F]$.  There is an
    injection $\Gal(L/F) \to (\Zz/p\Zz)^n$ determined by the faithful
    action of $\Gal(L/F)$ on the $p$th roots of the $q_i$.  If this
    injection fails to be onto, we can find a nonzero vector
    $(s_1,\ldots,s_n) \in (\Zz/p\Zz)^n$ complementary to the image.
    Then $\Gal(L/F)$ fixes $t = \prod_{i = 1}^n q_i^{s_i/p}$, so $t
    \in F$.  But then
    \begin{equation*}
      t^p = \prod_{i = 1}^n q_i^{s_i}
    \end{equation*}
    is a $p$th power in $F$, contradicting unique factorization in
    $R$, as the $s_i$ are not all congruent to 0 modulo $p$.

    Unwinding, it follows that the image of $\Gal(L/F) \to
    (\Zz/p\Zz)^n$ is all of $(\Zz/p\Zz)^n$, so $\Gal(L/F)$ has size at
    least $p^n$, so $[L : F] \ge p^n = \dim_F S'$, so $S'
    \twoheadrightarrow L$ is an isomorphism, so $S'$ is a field, so
    $S$ is an integral domain.
  \end{proof}
  For any function $\eta : [n] \to [p]$, let $U_\eta$ be the set of
  $(\vec{x},y) \in C$ such that $x_i \in U_{\eta(i)}$ for every $i$.
  \begin{claim}
    For any $\eta$, the set $U_\eta$ is non-empty.
  \end{claim}
  \begin{proof}
    Take arbitrary $y \in 1 + \mm_1 \cap \mm_2$.  It suffices to prove
    that for any $i, k$, there is an $x_i \in U_k$ such that $x_i^p =
    y + \epsilon_i$.  Because $K$ is algebraically closed, $y$ has at
    least one $p$th root $z$.  One checks that $z \in U_{k'}$ for some
    $k'$.  Multiplying by $\omega^{k - k'}$ yields a $p$th root in
    $U_k$.
  \end{proof}
  Similarly, define $V_\eta$ to be the set of $(\vec{x},y) \in C$ such
  that $x_i \in V_{\eta(i)}$ for every $i$.  Then $V_\eta$ is likewise
  non-empty.  Note that $U_\eta$ and $V_\eta$ are open subsets of $C$
  with respect to the topologies induced by $\Oo_1$ and $\Oo_2$,
  \emph{respectively}.  By Fact~\ref{strong-approx} and
  Claim~\ref{irredc} above, it follows that $U_\eta \cap V_{\eta'} \ne
  \emptyset$ for any $\eta, \eta'$.  Now given $S \subseteq
  \{1,\ldots,n\}$, choose $\eta, \eta'$ such that
  \begin{equation*}
    S = \{i : \eta(i) = \eta'(i)\}
  \end{equation*}
  and choose $(\vec{x},y) \in U_\eta \cap V_{\eta'}$.
  \begin{claim}
    For any $i$,
    \begin{equation*}
      y + \epsilon_i \in W \iff \eta(i) = \eta'(i) \iff i \in S.
    \end{equation*}
  \end{claim}
  \begin{proof}
    First suppose $\eta(i) = \eta'(i) = k$.  Then $x_i \in U_k \cap
    V_k$, so $\omega^{-k}x_i \in U_0 \cap V_0$.  Thus $y + \epsilon_i$
    is the $p$th power of an element of $U_0 \cap V_0$, namely
    $\omega^{-k}x_i$.  So $y + \epsilon_i \in W$ by definition of $W$.

    Conversely, suppose $y + \epsilon_i \in W$.  Then there is some $z
    \in U_0 \cap V_0$ such that $z^p = y + \epsilon_i = (x_i)^p$.  So
    $x_i = \omega^k z$ for some $k \in \Zz/p\Zz$.  Then $x_i \in U_k
    \cap V_k$, so $\eta(i) = k = \eta'(i)$.
  \end{proof}
  This last claim immediately implies that the relation
  \begin{equation*}
    \varphi(x;y) \iff x + y \in W
  \end{equation*}
  has the independence property.
\end{proof}

\begin{theorem}
  A model $K \models T_P$ is NIP if and only if $P$ is totally
  ordered.
\end{theorem}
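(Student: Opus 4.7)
The plan is to prove the two directions separately. For the forward direction, suppose $P$ is totally ordered and induct on $|P|$. The base case $P = \{\bot\}$ is ACF, which is stable hence NIP. For the inductive step, let $a$ be the unique atom above $\bot$ and set $P_a = \{p \in P : p \ge a\}$, a smaller chain. By property (1) stated in the introduction, a model of $T_P$ is essentially an ACVF structure $(K,\Oo_a)$ together with a $T_{P_a}$-structure on the residue field $k_a = \res \Oo_a$, and $T_{P_a}$ is NIP by induction. I then invoke the standard Ax--Kochen--Ershov-type principle that ACVF equipped with NIP extra structure on its residue field remains NIP, which rests on the stable embeddedness and orthogonality of residue field and value group in ACVF together with the fact that the induced structure on the residue field is the pure field structure.

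For the backward direction, suppose $P$ is not totally ordered and pick incomparable $p, p' \in P$. Let $q = p \wedge p'$; then $q < p$ and $q < p'$ strictly. The tree-injectivity axiom then gives $\Oo_p \subsetneq \Oo_q$ and $\Oo_{p'} \subsetneq \Oo_q$, while the axiom $\Oo_{p \wedge p'} = \Oo_p \cdot \Oo_{p'}$ yields $\Oo_p \cdot \Oo_{p'} = \Oo_q$. Passing to the residue field $k := \res \Oo_q$, which is algebraically closed, and using the canonical bijection $\Val(K|\Oo_q) \to \Val(k)$ from the introduction, the rings $\Oo_p, \Oo_{p'}$ correspond to $\bar\Oo_p := \Oo_p \div \Oo_q$ and $\bar\Oo_{p'} := \Oo_{p'} \div \Oo_q$ on $k$; these are non-trivial (as $\Oo_p, \Oo_{p'} \subsetneq \Oo_q$), and their join corresponds to $\Oo_q$ itself, i.e., to the trivial valuation on $k$, so $\bar\Oo_p$ and $\bar\Oo_{p'}$ are independent. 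The structure $(k, \bar\Oo_p, \bar\Oo_{p'})$ is interpretable in $K$: the sort $k$ is $\Oo_q/\mm_q$ with $\mm_q$ definable from the $\Oo_q$-predicate, and $\bar\Oo_p, \bar\Oo_{p'}$ are the residue images of $\Oo_p, \Oo_{p'}$. By the preceding Lemma the interpreted structure has IP, and IP transfers up under interpretation, so $K$ has IP.

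The main obstacle I anticipate is the forward direction, specifically the clean invocation of an NIP-preservation theorem for ACVF expanded by NIP residue-field structure; this is folklore in the Ax--Kochen--Ershov tradition, but citing it carefully (and handling the mixed-characteristic cases uniformly) is the one non-trivial external input. The backward direction, by contrast, is essentially a direct translation of incomparability in $P$ into the two-independent-valuation configuration already handled by the previous lemma.
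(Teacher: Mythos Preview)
Your argument for the failure of NIP when $P$ is not totally ordered is essentially identical to the paper's: pass to the residue field of $\Oo_{p\wedge p'}$, observe that the two induced valuations there are non-trivial and independent, and apply the preceding Lemma.

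For the NIP direction when $P$ is a chain, your approach is genuinely different from the paper's. The paper does \emph{not} induct or invoke an Ax--Kochen--Ershov NIP-transfer principle. Instead it observes that when $P$ is totally ordered there is a maximal element $\top$, and every $\Oo_p$ is a coarsening of $\Oo_\top$. Coarsenings of $\Oo_\top$ correspond to convex subgroups of the value group $\Gamma$, and every convex subgroup of an ordered abelian group is externally definable. Hence $(K,\Oo_p:p\in P)$ is interpretable in the Shelah expansion of the ACVF structure $(K,\Oo_\top,\Gamma)$, which is NIP because Shelah expansions preserve NIP. This is a one-step argument with no induction and no black-box transfer theorem beyond the standard fact about Shelah expansions.

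Your inductive approach is correct in outline, but the step you flag as the obstacle really is one: the statement ``ACVF expanded by NIP structure on the stably embedded residue field remains NIP'' is true, but it is not quite folklore---it requires either a relative quantifier-elimination argument or a general result on NIP enrichments of stably embedded sorts (as in work of Chernikov--Simon or Jahnke--Simon), and you would need to handle all residue characteristics uniformly. The paper's Shelah-expansion trick sidesteps this entirely, trading the external AKE-type input for the single well-known fact that $M^{\mathrm{Sh}}$ is NIP when $M$ is. Your route has the virtue of being more structural (it explains \emph{why} the chain case is tame, layer by layer), while the paper's is shorter and more self-contained.
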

\begin{proof}
  First suppose $P$ is not totally ordered.  Take two incomparable
  elements $p_1, p_2$ and let $p_0 = p_1 \wedge p_2$.  Then
  $\Oo^K_{p_0}$ is the join of $\Oo^K_{p_1}$ and $\Oo^K_{p_2}$.  It
  follows that
  \begin{equation*}
    \Oo^K_{p_1} \div \Oo^K_{p_0}, ~ \Oo^K_{p_2} \div \Oo^K_{p_0}
  \end{equation*}
  are two independent non-trivial valuations on $\res \Oo^K_{p_0}$.
  These valuation rings and the field $\Oo^K_{p_0}$ are interpretable,
  so $K$ interprets an algebraically closed field with two independent
  valuations, and therefore fails NIP by the Lemma.

  Conversely, suppose $P$ is totally ordered.  Let $\top$ be the
  greatest element of $P$.  Then every $\Oo_p$ is a coarsening of
  $\Oo_\top$.  Let $\Gamma$ be the value group of $\Oo_\top$.  The
  two-sorted structure $(K,\Oo_\top,\Gamma)$ is bi-interpretable with
  the C-minimal theory ACVF, hence NIP.  Every convex subgroup of
  $\Gamma$ is externally definable.  Therefore, in the Shelah
  expansion of $(K,\Oo_0,\Gamma)$, every convex subgroup of $\Gamma$
  is definable, and every coarsening of $\Oo_\top$ is definable.
  Consequently, the original structure $(K,\Oo_p : p \in P)$ is
  interpretable in the (NIP) Shelah expansion of $(K,\Oo_\top,\Gamma)$.
\end{proof}
\begin{corollary}
  A structure $(K,\Oo_1,\ldots,\Oo_n)$ with $K = K^{alg}$ is NIP iff
  the $\Oo_i$ are pairwise comparable.
\end{corollary}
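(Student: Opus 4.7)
The plan is to deduce this corollary directly from the preceding Theorem (NIP characterization for $T_P$) by means of Example~\ref{acvnf-tp}. First, I would invoke Example~\ref{acvnf-tp}: given $(K,\Oo_1,\ldots,\Oo_n)$ with $K$ algebraically closed, let $P$ be the sub-$\wedge$-semilattice of $\Val(K)^{op}$ generated by $\{\Oo_1,\ldots,\Oo_n,K\}$. Then $P$ is a finite tree, and $(K,\Oo_1,\ldots,\Oo_n)$ is bi-interpretable with the model $(K,\Oo_p : p \in P) \models T_P$. Since NIP is invariant under bi-interpretation, $(K,\Oo_1,\ldots,\Oo_n)$ is NIP if and only if $(K,\Oo_p : p \in P)$ is NIP, which by the Theorem holds if and only if $P$ is totally ordered.

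So the remaining task is purely combinatorial: show that $P$ is totally ordered if and only if $\Oo_1,\ldots,\Oo_n$ are pairwise comparable in $\Val(K)$. For the easy direction, suppose $\Oo_1,\ldots,\Oo_n$ are pairwise comparable. Then they form a chain under $\subseteq$, hence also in $\Val(K)^{op}$. For any $i,j$, the join $\Oo_i \cdot \Oo_j$ in $\Val(K)$ is simply the larger of $\Oo_i, \Oo_j$; equivalently, the meet in $\Val(K)^{op}$ stays within the chain. Therefore the $\wedge$-semilattice generated by $\{\Oo_1,\ldots,\Oo_n,K\}$ is just $\{K,\Oo_1,\ldots,\Oo_n\}$, which is totally ordered.

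For the converse, suppose some pair $\Oo_i, \Oo_j$ is incomparable: neither contains the other. Then $\Oo_i \cdot \Oo_j$ is a valuation ring strictly containing both $\Oo_i$ and $\Oo_j$; in particular $\Oo_i \cdot \Oo_j$ belongs to $P$ and lies strictly below both $\Oo_i$ and $\Oo_j$ in $\Val(K)^{op}$, while $\Oo_i$ and $\Oo_j$ themselves remain incomparable in $\Val(K)^{op}$. Hence $P$ is not totally ordered.

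There is no real obstacle here; the argument is essentially a routine translation between the structure $(K,\Oo_1,\ldots,\Oo_n)$ and its $T_P$-reformulation. The only point to be careful about is that the definitional expansion encoded by Example~\ref{acvnf-tp} genuinely gives bi-interpretability (the extra predicates $\Oo_p$ for non-generators $p$ are products $\Oo_i \cdot \Oo_j$, hence definable from the original data), so NIP transfers in both directions between the two structures.
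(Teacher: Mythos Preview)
Your proposal is correct and is exactly the intended route: the paper states this corollary without proof because it follows immediately from the preceding Theorem via Example~\ref{acvnf-tp}, and your argument fills in precisely those details. The combinatorial equivalence (the generated sub-$\wedge$-semilattice $P$ is totally ordered iff the $\Oo_i$ are pairwise comparable) is handled correctly; in fact for the converse direction you need only observe that incomparable $\Oo_i,\Oo_j$ remain incomparable in $P \subseteq \Val(K)^{op}$.
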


\section{Open questions} \label{sec:horizons}
From here, there are several evident directions for potential
generalization.
\subsection{Improving the bound on burden}
If $P$ is a tree with $n$ leaves, we have shown that $T_P$ has burden at most $2n$.  This is probably suboptimal; the correct value should be $n$.
\subsection{Multi-valued fields with residue structure}
If the $T_i$ in Lemma~\ref{mildly-annoying} have finite burden, must the model companion then have finite burden?  If so, this would give a more direct proof that $T_P$ has finite burden.
\subsection{Forking and dividing}
Can we characterize forking in the theory $T_P$?  Does forking equal
dividing?  In the case where $P = \{\bot, 1, \ldots, n\}$, i.e., the
case of $n$ independent non-trivial valuations, forking was
characterized in \cite{myself} \S 11.6.  Specifically, $A \forkindep_B
C$ holds in the structure $(K,\Oo_1,\ldots,\Oo_n)$ if and only if $A
\forkindep_B C$ holds in each ACVF reduct $(K,\Oo_i)$.  Moreover,
forking equals dividing.  It would be natural to generalize these
results to the non-independent setting.

\subsection{Real closed and $p$-adically closed fields}
Chapter 11 of \cite{myself} also considered the setting of
$(K,\Oo_1,\ldots,\Oo_n)$, where $K$ is real closed or $p$-adically
closed and the $\Oo_i$ are independent non-trivial valuation rings,
independent from the canonical topology on $K$.  Under these
assumptions, the structure has finite burden.  It seems that one
should be able to drop these independence assumptions.  For example,
the theory of real closed fields $(K,+,\cdot,\Oo_1,\ldots,\Oo_n)$ with
$n$ valuation rings ought to have finite burden and be decidable.

The appropriate analogue of $T_P$ should be the following.  Let $P$ be
a non-trivial finite tree and $\rho$ be a distinguished leaf (maximal
element).  Define $T^R_{(P,\rho)}$ recursively as follows.  Let $P_1,
\ldots, P_n$ be the branches of $P$; without loss of generality $P_1$
is the branch containing $\rho$.
\begin{itemize}
\item If $P_1 = \{\rho\}$, then a model of $T^R_{(P,\rho)}$ should consist of
  \begin{enumerate}
  \item A real closed field $K$.
  \item Non-trivial valuation rings $\Oo_2, \ldots, \Oo_n$ on $K$,
    independent from each other and from the order topology on $K$.
    (This implies that each $\res \Oo_i$ is algebraically closed.)
  \item A $T_{P_i}$ structure on each $\res \Oo_i$.
  \end{enumerate}
\item If $P_1$ is non-trivial, then a model of $T^R_{(P,\rho)}$ should
  consist of
  \begin{enumerate}
  \item A real closed field $K$.
  \item Non-trivial independent valuation rings $\Oo_1, \ldots,
    \Oo_n$, where $(K,\Oo_1) \models RCVF$, i.e., $\Oo_1$ is a convex
    subgroup.  (This ensures that $\res \Oo_1$ is real closed and
    $\res \Oo_i$ is algebraically closed for $i > 1$.)
  \item A $T^R_{(P_1,\rho)}$-structure on $\res \Oo_1$.
  \item A $T_{P_i}$-structure on $\res \Oo_i$ for $i > 1$.
  \end{enumerate}
\end{itemize}
Something similar should work for $p$-adically closed fields.

\subsection{Bounded PRC and P$p$C fields}
Let $T_{n,m}$ be the theory of existentially closed fields with $n$
valuations and $m$ orderings.  In Chapter 11 of \cite{myself}, the
theory $T_{n,m}$ was shown to have finite burden.  The case $T_{n,1}$
is the aforementioned real closed field with $n$ independent
valuations.

The case $T_{0,m}$ of $m$ orderings and no valuations is a special
case of a theorem of Montenegro.  Recall that a field $K$ is
\emph{bounded} if it has finitely many Galois extensions of degree
$d$, for every $d$.  In her dissertation \cite{samaria-thesis},
Montenegro proved that bounded pseudo real closed (PRC) fields have
finite burden.  The models of $T_{0,m}$ turn out to be a subset of the
bounded PRC fields.

A model of $T_{n,m}$ is probably equivalent to a model of $T_{0,m}$
with $n$ independent valuation rings, independent from the order
topologies.  More generally, there is work in progress by Montenegro
and Rideau-Kikuchi which should show that if $K$ is a bounded
pseudo-real closed field, and $\Oo_1, \ldots, \Oo_n$ are $n$
independent valuations on $K$, independent from all the orderings,
then $(K,\Oo_1,\ldots,\Oo_n)$ has finite burden.

It would be natural to ask whether the independence assumption can be
dropped: given a bounded PRC field $K$ and finitely many arbitrary
valuation rings $\Oo_1, \ldots, \Oo_n$, does the resulting structure
$(K,\Oo_1,\ldots,\Oo_n)$ have finite burden?

More generally, one can replace ``PRC'' with ``pseudo $p$-adically
closed'', or a mixture of the two, and the above discussion goes
through (including the citations).

\subsection{Dp-minimal fields}
After [sic] \cite{samaria-thesis} and \cite{myself}\S 11 were
completed, dp-minimal fields were completely classified (\cite{myself}
\S 9).  In the preceding discussions, can we replace RCF and $p$CF
with other dp-minimal theories of fields?  For example,
\begin{conjecture}\label{the-conj}
  If $K$ is a dp-minimal pure field and $\Oo_1, \ldots, \Oo_n$ are
  valuation rings on $K$, then $(K,\Oo_1,\ldots,\Oo_n)$ has finite
  burden.
\end{conjecture}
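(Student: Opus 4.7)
The plan is to extend the framework of this paper, replacing $ACF$ with an arbitrary dp-minimal theory of fields, and then to follow the same general strategy: set up an analog of $T_P$ encoding joins of the $\Oo_i$ together with whatever canonical valuation-theoretic data $K$ carries, prove relative model completeness, establish controlled amalgamation, define a Keisler measure of ``probable truth,'' and bound the burden via mutually indiscernible arrays. The starting point is the classification of dp-minimal pure fields in \cite{myself} \S 9: $K$ is either algebraically closed, real closed, $p$-adically closed, or admits a canonical definable henselian valuation with residue field of strictly smaller dp-rank. In every case $K$ already comes equipped with a canonical finite tower of valuation rings (possibly trivial), which I would adjoin to $\Oo_1,\ldots,\Oo_n$ before closing under joins to form a finite tree $P$ exactly as in Example~\ref{acvnf-tp}.

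Next, I would define theories $T_P^{dp}$ and $(T_P^{dp})^0$ by decorating each branch of $P$ with the appropriate dp-minimal residue theory: $ACF$, $RCF$, $pCF$, or another dp-minimal field theory appearing as a residue in the canonical tower. Lemma~\ref{mildly-annoying} should generalize, since its proof uses only Fact~\ref{strong-approx} (approximation on irreducible varieties in pairwise independent V-topologies, which holds in all relevant characteristics) together with model completeness of the residue theories. Consequently, one should obtain that $T_P^{dp}$ is the model companion of $(T_P^{dp})^0$, model complete, with good behavior of algebraic closure --- just as in \S\ref{cast-of-characters}.

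The heart of the matter is to adapt the controlled amalgamation (Lemma~\ref{amalgamation-master}, Lemma~\ref{comet}) and the probable-truth Keisler measure of Theorem~\ref{prob-tr}. For $ACF$ the key input was free amalgamation of fields over $F = F^{alg}$; for other dp-minimal theories one has an analogous notion of ``stationary'' amalgamation over a distinguished algebraic closure (real closed, $p$-adically closed, or maximally complete closure, as appropriate). One would prove that such amalgamations lift to amalgamations of valued fields with free choice of residue amalgamation type, then construct $\Pp(\varphi(\vec{a})|K)$ by averaging $\Pp'$ over finite normal extensions that resolve $\varphi$. The counting used in Theorem~\ref{most-annoying} to guarantee well-definedness should go through because Lemma~\ref{central-cube} is essentially a diagram-chase in $\Val$, independent of which dp-minimal theory decorates each node. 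Given all this, the final burden argument of \S\ref{finite-burden} should adapt without much change, since Lemma~\ref{the-lemma} relies only on indiscernibility in a dp-rank-one reduct, and every chain of valuations on a dp-minimal field yields such a reduct.

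The main obstacle will be the amalgamation step. In the $ACF$ setting the diagram $\Frac(K_1 \otimes_{K_0} K_2)$ is a field whenever $K_0 = K_0^{alg}$, which fails for $RCF$ and $pCF$: one must simultaneously coordinate orderings or $p$-adic closures across the amalgam while also coordinating the $\Oo_i$-valuations and their residues. Handling this interaction --- and showing that one retains enough freedom to realize every prescribed amalgamation type on the residue fields at each node of $P$ --- is likely to require a genuinely new argument, or at least a careful case analysis indexed by which dp-minimal theory sits at each leaf. A possible intermediate step, suggested by \S\ref{sec:horizons}, is to prove Conjecture~\ref{the-conj} first in the case where $K$ is real closed or $p$-adically closed (generalizing \cite{myself} Ch.~11 by dropping the independence hypotheses), and then to reduce the general dp-minimal case to these by inducting along the canonical henselian tower produced by the classification.
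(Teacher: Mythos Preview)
The statement you are attempting is labeled \textbf{Conjecture}~\ref{the-conj} in the paper and sits in \S\ref{sec:horizons} (``Open questions''); the paper offers no proof, only the remark that the conjecture ``seems likely when $K$ is of type (\ref{uno})'' and that positive characteristic ``may cause additional problems.''  So there is no paper proof to compare against: what you have written is a research plan for an open problem, not a proof.

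As a plan, your outline is broadly in the spirit of what the paper itself suggests (compare the sketch of $T^R_{(P,\rho)}$ for the real-closed case).  You correctly identify amalgamation over non-algebraically-closed bases as the main obstruction.  However, you underestimate several other difficulties.  First, Fact~\ref{strong-approx} is stated and used only for algebraically closed $K$; for real closed or $p$-adically closed $K$ the analogous density statement on an irreducible variety $V$ requires $V$ to have a smooth $K$-rational point, and the proof of Lemma~\ref{mildly-annoying} must be reworked accordingly --- your claim that it ``holds in all relevant characteristics'' skips over this.  Second, the Keisler-measure machinery of \S\ref{probable-truth} rests on Theorem~\ref{most-annoying}, whose hypothesis is that every residue extension $\res\Oo^L_p/\res\Oo^K_p$ is relatively algebraically closed; arranging and preserving this along an indiscernible sequence (Lemma~\ref{the-lemma}) is done in ACVF, and the analogous statement in RCVF or a henselian dp-minimal theory is not automatic.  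Third, the paper explicitly flags the positive-characteristic types (\ref{dos}) and (\ref{tres}) as problematic, and nothing in your outline addresses imperfection or the failure of Galois-theoretic counting in that setting.  In short: your strategy is reasonable as a starting point for the characteristic-zero cases, but as written it is a proposal, not a proof, and the paper itself regards the statement as open.
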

Recall that, up to elementary equivalence, dp-minimal fields come in
three types:
\begin{enumerate}
\item \label{uno} Hahn series $F((T^\Gamma))$ where $F$ is a local
  field of characteristic 0 ($\Rr, \Qq_p$, or a finite extension), and
  where $\Gamma$ is a dp-minimal ordered abelian group.  Examples:
  \begin{equation*}
    \Rr, \Cc, \Qq_p, \Cc((T)), \Qq_3(\sqrt{-1}), \Qq_p((T))
  \end{equation*}
\item \label{dos} Hahn series $\Ff_p^{alg}((T^\Gamma))$, where
  $\Gamma$ is a $p$-divisible dp-minimal ordered abelian group.
\item \label{tres} The mixed characteristic analogue of (\ref{dos}).
\end{enumerate}
Conjecture~\ref{the-conj} seems likely when $K$ is of type
(\ref{uno}); for types (\ref{dos}-\ref{tres}) positive characteristic
may cause additional problems.

There may be some analogue of PRC and P$p$C for other dp-minimal
complete theories of fields.  One could then generalize
Conjecture~\ref{the-conj} to the bounded pseudo dp-minimal setting.

Also, in the dp-minimal case, Conjecture~\ref{the-conj} seems
plausible even when $K$ is a dp-minimal \emph{expansion} of a field,
because of the known compatibility between definable sets and the
canonical topology.

\subsection{Fields of finite dp-rank or finite burden}
Since we are stepping outside inp-minimality, we may as well conjecture
\begin{conjecture}\label{the-conj-2}
  If $K$ is a field of finite dp-rank, and $\Oo_1, \ldots, \Oo_n$ are
  $n$ valuation rings on $K$, then $(K,\Oo_1,\ldots,\Oo_n)$ has finite
  burden.
\end{conjecture}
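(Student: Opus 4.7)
The strategy is to adapt the framework of this paper for ACF to the setting of finite-dp-rank fields. Let $P$ be the sub-$\wedge$-semilattice of $\Val(K)^{op}$ generated by $\{\Oo_1,\ldots,\Oo_n,K\}$; it is a finite tree with at most $n^2+1$ elements. By the bi-interpretability of Example~\ref{acvnf-tp}, it suffices to bound the burden of the definitional expansion $(K,\Oo_p : p \in P)$, and I proceed by induction on $|P|$.

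Let $a_1,\ldots,a_m$ enumerate the minimal elements of $P\setminus\{\bot\}$ and set $P_i = \{q \in P : q \ge a_i\}$. A preliminary input is that residue fields of valuations on finite-dp-rank fields are again of finite dp-rank as pure fields---this is known in the dp-minimal case and plausible more generally. Granted this, each $\res\Oo_{a_i}$ with the induced valuations $\Oo_p \div \Oo_{a_i}$ for $p \in P_i$ is itself a finite-dp-rank field with fewer than $n$ valuations, so by induction the structure $(\res\Oo_{a_i}, \Oo_p \div \Oo_{a_i} : p \in P_i)$ has finite burden. The main step is then a finite-burden analogue of Lemma~\ref{mildly-annoying}: if $T_0$ is a finite-burden theory of fields and each $T_i$ is a finite-burden expansion of ACF, then the multi-sorted theory of structures $(K',k_1',\ldots,k_m')$ with $K'\models T_0$, $k_i'\models T_i$, pairwise independent non-trivial valuation rings $\Oo'_i$, and residue maps $\res_i : K' \rightsquigarrow k_i'$, has finite burden. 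This is precisely the open problem raised at the start of Section~\ref{sec:horizons}. Given this preservation, the amalgamation Lemma~\ref{amalgamation-master} (which is purely ring-theoretic) and the probable-truth Keisler measure of Theorem~\ref{prob-tr} adapt with no essential change, and the indiscernible-sequence argument of Section~\ref{finite-burden}---together with Lemma~\ref{the-lemma}, whose proof uses only the model-theoretic Galois correspondence and not algebraic closedness---yields the burden bound.

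The principal obstacle is the finite-burden-preservation lemma. In the ACF proof the decisive ingredient is Fact~\ref{strong-approx} on joint density of the metric topologies induced by pairwise independent valuations on an irreducible variety; in general a finite-dp-rank field need not satisfy such a strong approximation statement, and without it one cannot run the analogue of Lemma~\ref{mildly-annoying}'s final existential-closure argument. For this reason I expect the dp-minimal case (Conjecture~\ref{the-conj}) to be settled first, exploiting the classification of dp-minimal fields to obtain the required approximation theorem, and then lifted to the general finite-dp-rank case via a burden-additivity argument over the tree $P$. A secondary technical point is finding the right notion of ``independent amalgamation'' when the base $K_0$ is not algebraically closed: the framework of Section~\ref{controlled-amalg} uses algebraic closedness of $K_0$ to guarantee that $k_1 \otimes_{k_0} k_2$ is a domain, and one would have to work over a Galois-theoretic enlargement to recover this.
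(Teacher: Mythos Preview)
The statement you are attempting is a \emph{conjecture}, not a theorem: the paper gives no proof, and immediately after stating it remarks that ``This is probably intractable until dp-finite fields are classified.'' So there is no paper proof to compare against, and your proposal should be read as a strategy sketch rather than a proof.

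As a strategy, your outline is reasonable in spirit but contains several genuine gaps that you partly acknowledge. First, the finite-burden-preservation step you flag as ``the principal obstacle'' is indeed the whole difficulty; the paper itself raises exactly this as an open question in \S\ref{sec:horizons}, so invoking it is circular. Second, your claim that Lemma~\ref{the-lemma} ``uses only the model-theoretic Galois correspondence and not algebraic closedness'' is not accurate: the lemma is stated for a monster model of ACVF (or trivially-valued ACF), and the proof uses that the fields $K_i$ and $L$ are algebraically closed and that $\acl$ equals field-theoretic algebraic closure. Third, the Keisler measure of Theorem~\ref{prob-tr} is built from Theorem~\ref{most-annoying} and Corollary~\ref{almost-qe}, both of which rest on the model-completeness result for $T_P$; without an analogue of Lemma~\ref{mildly-annoying} in the dp-finite setting you have no model companion and hence no almost-quantifier-elimination, so the probable-truth machinery does not transfer ``with no essential change.'' Finally, the assertion that residue fields of dp-finite fields are dp-finite is itself open in general.

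In short: your proposal correctly identifies the architecture one would want, and correctly isolates strong approximation and burden preservation under residue structure as the missing ingredients, but these are precisely why the paper records the statement as a conjecture rather than a theorem.
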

This is probably intractable until dp-finite fields are classified.

Assuming one can complete the analogy
\begin{quote}
  real closed : pseudo real closed :: dp-finite : ?
\end{quote}
then one would also hope for an analogue of
Conjecture~\ref{the-conj-2} in the ``bounded pseudo dp-finite''
setting, though ``bounded'' needs to be changed (dp-finite fields
themselves need not be bounded!).

While we are here, we may as well make a very general conjecture:
\begin{conjecture}
  If $K$ is a field of finite burden, possibly with extra structure,
  and $\Oo$ is a valuation ring on $K$, then $(K,\Oo)$ has finite
  burden.
\end{conjecture}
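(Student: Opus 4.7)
The plan would be to adapt the strategy of Section~\ref{finite-burden} to this more general setting, aiming for a bound of the form $\bdn(K,\Oo) \le \bdn(K) + c$ with $c$ an absolute constant (morally $c = 1$, matching the dp-minimality of ACVF). The first step is standard: given a deep inp-pattern in $(K,\Oo)$, apply the dp-addition lemma (Lemma~4.1 of \cite{dp-add}) to two reducts, namely the original finite-burden structure on $K$ and the valued-field reduct $(K,\Oo)$ in the language of rings together with $\Oo$ (which is a reduct of ACVF, hence dp-minimal). Dropping at most $2\bdn(K) + 2$ rows arranges that every remaining row is $a$-indiscernible in each reduct, so the task reduces to deriving a contradiction from a single surviving row together with $k$-inconsistency.

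The relative closure Lemma~\ref{the-lemma} should transfer essentially verbatim, since its proof uses only indiscernibility over $a$ together with ACVF-style arguments inside an ambient monster model. The real obstruction is to replace the Keisler measure $\Pp$ of Section~\ref{probable-truth} and the associated Proposition~\ref{key-technical}. In the ACF setting, $\Pp$ was defined by averaging over the $T_P^0$-structures on finite Galois extensions, with well-definedness guaranteed by Theorem~\ref{most-annoying}; both the definition and its key properties rested on amalgamation over algebraically closed bases (Section~\ref{amalg-in-tp}). None of this is immediately available for a general finite-burden $K$ with unspecified extra structure.

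A plausible route is to define $\Pp$ at the level of extensions of the full $(K,\Oo,\ldots)$-structure to finite separable extensions $L/K$, weighting by indices inside the absolute Galois group of $K$ rather than by $\Aut(L/K)$. The analog of Proposition~\ref{key-technical} would then reduce to a counting statement: in a pushout diagram of finite normal extensions, the number of ways to extend a valued-field-plus-extra-structure to $L'$ should be independent of the choices, at least under a relative algebraic closure hypothesis on residue fields. The hard part will be proving this counting statement, since it requires controlling how the extra structure on $K$ interacts with ramification of $\Oo$ in algebraic extensions, and this is genuinely delicate when the extra structure is completely arbitrary. A natural intermediate target is the case where $K$ is \emph{bounded}, which restricts the Galois side and brings the problem into the ambit of the bounded PRC/P$p$C setting of the preceding subsection; success there would be strong evidence for the general conjecture.
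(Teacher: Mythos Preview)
The statement is a \emph{Conjecture}, and the paper offers no proof; immediately after stating it the author writes ``There is no real approach to proving this, short of classifying the fields of finite burden.''  So there is nothing in the paper to compare against: the problem is regarded as wide open.

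Your proposal is an honest speculative outline, and you flag the main obstacles yourself, but there is one concrete error in the part you present as routine.  You assert that the reduct $(K,\Oo)$ ``is a reduct of ACVF, hence dp-minimal.''  This is false in the generality of the conjecture: $K$ is only assumed to be a field of finite burden, not algebraically closed, so $(K,\Oo)$ is not a model of ACVF and need not be dp-minimal (being a substructure of an ACVF model does not confer dp-minimality).  In the paper's proof of finite burden for $T_P$, the analogous step works precisely because the ambient field \emph{is} algebraically closed, so each single-valuation reduct is genuinely a model of ACVF (or a Shelah expansion thereof).  Without dp-minimality of the valued-field reduct, your application of the dp-addition lemma does not yield the bound $2\bdn(K)+2$ you claim; you would need to already know $\bdn(K,\Oo)$ is finite, which is the very thing at issue.

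Beyond that, your diagnosis of the Keisler-measure construction as the real obstruction is correct but perhaps understates how deep the dependence on algebraic closure runs: amalgamation over algebraically closed bases, model completeness of $T_P$, Corollary~\ref{almost-qe}, and the counting Theorem~\ref{most-annoying} all feed into Proposition~\ref{key-technical}, and none has an obvious analogue when the extra structure on $K$ is arbitrary.  Your proposed fix via weights in the absolute Galois group presupposes that the full $(K,\Oo,\ldots)$-structure extends to finite separable extensions in a controlled (finite, uniformly counted) way, and for completely unspecified extra structure there is no reason this should hold.  Your final suggestion to retreat to the bounded case is sensible exactly because boundedness is what might restore such control; but as the paper's own discussion makes clear, even that special case is open.
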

There is no real approach to proving this, short of classifying the
fields of finite burden.

\subsection{Acknowledgments}
An earlier version of these results was presented at the conference
``Model Theory of Valued Fields'' at IHP in March 2018.  The author
would like to thank the organizers for the invitation, which provided
motivation to prove these new results.

The author would also like to thank Silvain Rideau-Kikuchi for helpful
discussions which provided encouragement to write up the results.

{\tiny This material is based upon work supported by the National
  Science Foundation under Awards No. DGE-1106400 and DMS-1803120.
  Any opinions, findings, and conclusions or recommendations expressed
  in this material are those of the author and do not necessarily
  reflect the views of the National Science Foundation.}

\bibliographystyle{plain} \bibliography{mybib}{}

\end{document}